\newlength{\bibsep}{\@listi \global\bibsep\itemsep \global\advance\bibsep by\parsep}
\newtheorem{theorem}{Theorem}[section]
\numberwithin{theorem}{section}
\newtheorem{prop}[theorem]{Proposition}
\newtheorem{lemma}[theorem]{Lemma}
\newtheorem{corollary}[theorem]{Corollary}
\theoremstyle{definition}
\newtheorem{definition}[theorem]{Definition}
\newtheorem{rem}[theorem]{Remark}
\numberwithin{equation}{section}
\newcommand{\N}{\mathbb{N}}
\newcommand{\R}{\mathbb{R}}
\newcommand{\C}{\mathbb{C}}
\newcommand\id{\operatorname{id}}
\newcommand\I{\mathrm{i}}
\newcommand\re{\operatorname{Re}}
\newcommand\im{\operatorname{Im}}
\newcommand\fa{\mathfrak{a}}
\newcommand\dom{\operatorname{dom}}
\newcommand\ran{\operatorname{ran}}
\DeclareMathOperator{\Div}{div}
\DeclareMathOperator{\curl}{curl}
\DeclareMathOperator{\essinf}{ess\,inf}
\newcommand\diag{{\mathrm{diag}}}
\newcommand\cS{\mathcal S}
\newcommand\cT{\mathcal T}
\newcommand\cA{\mathcal A}
\newcommand\cB{\mathcal B}
\newcommand\cC{\mathcal C}
\newcommand\cJ{\mathcal J}
\newcommand\cL{\mathcal L}
\newcommand\cH{\mathcal H}
\newcommand\cK{\mathcal K}
\newcommand\ft{\mathfrak t}
\newcommand\fs{\mathfrak s}
\newcommand\ov\overline
\newcommand\eps\varepsilon
\renewcommand\epsilon\varepsilon
\renewcommand\rho\varrho
\newcommand\al\alpha
\newcommand\la\lambda
\newcommand\ds\displaystyle
\newcommand\p\partial
\newcommand{\sto}{\stackrel{s}{\rightarrow}}
\newcommand{\gsr}{\stackrel{gsr}{\rightarrow}}
\newcommand{\dist}{\operatorname{dist}}
\newcommand{\supp}{\operatorname{supp}}
\newcommand{\beq}{\begin{equation}}
\newcommand{\eeq}{\end{equation}}
\newcommand{\be}{\begin{equation*}}
\newcommand{\ee}{\end{equation*}}
\newcommand{\bmat}{\begin{pmatrix}}
\newcommand{\emat}{\end{pmatrix}}
\colorlet{DarkOlive}{OliveGreen!70!Black}
\newcounter{counter_a}
\DeclarePairedDelimiter{\norma}{\lVert}{\rVert}
\newcommand{\diagdots}[3][-25]{%
  \rotatebox{#1}{\makebox[0pt]{\makebox[#2]{\xleaders\hbox{$\cdot$\hskip#3}\hfill\kern0pt}}}%
}
\author[F.~Ferraresso]{Francesco Ferraresso}
\address{School of Mathematics,
Cardiff University, Abacws,
Senghennydd Road, Cathays,
Cardiff CF24 4AG, UK}
\email{FerraressoF@cardiff.ac.uk}
\author[M.~ Marletta]{Marco Marletta}
\address{School of Mathematics,
Cardiff University, Abacws,
Senghennydd Road, Cathays,
Cardiff CF24 4AG, UK}
\email{MarlettaM@cardiff.ac.uk}
\date{\today}
\thanks{}
\title[Spectral properties of the dissipative Drude-Lorentz model]{Spectral properties of the inhomogeneous Drude-Lorentz model with dissipation}
\begin{document}

\begin{abstract}
We establish spectral enclosures and spectral approximation results for the  inhomogeneous lossy Drude-Lorentz system with purely imaginary poles, in a possibly unbounded Lipschitz domain of $\R^3$.
Under the assumption that the coefficients $\theta_e$, $\theta_m$ of the material are asymptotically constant at infinity, we prove that
spectral pollution due to domain truncation can lie only in the essential numerical range of a $\curl \curl_0 - f(\omega)$ pencil.\\
As an application, we consider a conducting metamaterial at the interface with the vacuum; we prove that the complex eigenvalues with non-trivial real part lie outside the set of spectral pollution. We believe this is the first result of enclosure of spectral pollution for the Drude-Lorentz model without assumptions of compactness on the resolvent of the underlying Maxwell operator.\\[0.2cm]
\emph{Keywords:} Drude-Lorentz model \: Maxwell's equations \: spectral enclosures \: spectral pollution \\[0.2cm]
\emph{Classification:} 35P99 \: 35Q61 \: 47A56
\end{abstract}

%
%

\maketitle

\section{Introduction}
\subsection{The Drude-Lorentz model}
This paper concerns the spectra
and spectral approximation of a time-harmonic Drude-Lorentz model \cite{Tip} which commonly
occurs in the description of a class of metamaterials. This class includes doubly negative metamaterials,
which behave as if the electric permittivity and the magnetic permeability are simultaneously negative.  In the early 2000s,
it was conjectured that these materials might
allow the creation of a \emph{perfect lens} or an \emph{invisibility cloak}, see e.g. \cite{Nico}, \cite{Pendry}. Shortly afterwards,
experimental evidence of metamaterial cloaking at microwave frequencies \cite{SMJCPSS} and of optical superlensing \cite{Lee_2005}
was obtained. In the mathematics literature, `cloaking by anomalous localized resonances' has been intensively studied, see \emph{e.g.,} \cite{MR3035988}. Mathematically, some of the counter-intuitive spectral
properties of the time-dependent Maxwell system for an interface between a metamaterial and
a vacuum are investigated for the non-dissipative case, in a special geometry, in \cite{MR3764925, CHJII}. For the dissipative case, in the whole space and in a setting allowing dimension-reduction, we refer to the recent article \cite{BDPW}.
Here we consider a more general Drude-Lorentz system
\begin{equation}\label{PDEs}
\begin{split}
&\curl \hat{H} =  i \omega\bigg( 1  - \frac{(\theta_e)^2}{\omega^2 + i \gamma_e \omega} \bigg) \hat{E},
\;\; -\curl\hat{E} =  i \omega\bigg(1  - \frac{(\theta_m)^2}{\omega^2 + i \gamma_m \omega} \bigg) \hat{H}, \\
&(\nu \times \hat{E})|_{\p \Omega} = 0,
\end{split}
\end{equation}
in a bounded or unbounded Lipschitz domain $\Omega\subseteq {\mathbb R}^3$ with outer normal $\nu$. The variable $\omega$ is the spectral parameter and $\theta_e$, $\theta_m$ are bounded real-valued functions.  We describe the essential
spectrum and its decomposition into parts connected with the behaviour of the coefficients at infinity and parts due to local dissipative effects. We obtain tight a-priori
enclosures for the set in which these different components of the spectrum may lie.
Adapting new non-selfadjoint techniques from \cite{MR3694623} and \cite{BFMT} to the setting
of meromorphic operator-valued functions, we examine how the spectrum behaves under perturbation of the
domain $\Omega$. We obtain unexpectedly small enclosures for the sets where spectral pollution
\cite[Def. 2.2]{MR3694623} may appear if an unbounded $\Omega$ is approximated by a large, bounded $\Omega$.

We now describe the problem in more detail. Starting from Maxwell's equations
\[
\partial_t D =  \curl H,  \quad \partial_t B = - \curl E, \quad  \Div D = 0, \quad \Div B = 0,\\
\]
relations between $(D, B)$ and $(E, H)$ must be imposed to capture the properties of the medium under consideration, see \cite{MR3023383} for an interesting discussion on the diverse constitutive relations and applications to linear bianisotropic media. The Drude-Lorentz model assumes these relations to be given by convolutions
\[
D(x,t) = E(x,t) + \hspace{-1mm}\int_{t_0}^t \hspace{-2mm}\chi_e(x, t-s) E(x, s) ds, \;\;
 B(x,t) = H(x,t) + \hspace{-1mm}\int_{t_0}^t \hspace{-2mm}\chi_m(x, t-s) H(x, s) ds.
\]
The functions $\chi_e(\cdot,t)$ and $\chi_m(\cdot,t)$ are assumed to be zero for $t<0$ and are usually described in terms of their Fourier transforms in time; for instance,
\[
\hat{\chi}_e(\omega) = - \frac{(\theta_e)^2}{\omega^2 + i \gamma_e \omega} - \sum_{n=1}^\infty \frac{(\Omega_n^e)^2}{\omega^2 +
i \gamma_n^e \omega - (\la_n^e)^2},
\]
in which $\la_n^e > 0$, $\Omega_n^e \geq 0$, $\gamma_e$, $\gamma_n^e > 0$, $n \in \N \cup \{0\}$, are constants, and
$\theta_e$ is some non-negative function.
From the equation $\curl \hat{H} = i \omega \hat{D} = i \omega(1 + \hat{\chi}_e) \hat{E}$ one then obtains
\[
\curl \hat{H} =  i \omega\bigg( 1  - \frac{(\theta_e)^2}{\omega^2 + i \gamma_e \omega} -
\sum_{n=1}^\infty \frac{(\Omega_n^e)^2}{\omega^2 + i \gamma_n^e \omega - (\la_n^e)^2}\bigg) \hat{E},
\]
together with a corresponding equation for $\curl\hat{E}$. In this paper, as in \cite{MR3764925}, we treat
the simplest case, namely the lossy Drude system \cite[\S6]{MR3421776} defined in \eqref{PDEs}.
\subsection{Notation} \label{notation}
\noindent $\bullet$ Let $\Omega \subset \R^3$ be an open set. $L^2(\Omega)^3 = L^2(\Omega, \C^3)$ is the standard Hilbert space of complex-valued vector fields having finite $L^2$-norm. The $L^2$-norm will be denoted by $\norma{\cdot}$.\\
$\bullet$ The homogeneous Sobolev or Beppo Levi space $\dot{H}^1(\Omega)$ is defined as the completion of $C^\infty_c(\overline{\Omega})^3$ with respect to the seminorm $\norma{u}_{\dot{H}^1(\Omega)} = \norma{\nabla u}$. \\
$\bullet$ $\nabla \dot{H}^1(\Omega) = \{ \nabla \varphi \in L^2(\Omega)^3 : \varphi \in \dot{H}^1(\Omega) \}$ will be regarded as a subspace of $L^2(\Omega)^3$. \\
$\bullet$ $H(\curl, \Omega) = \{ u \in L^2(\Omega)^3 : \curl u \in L^2(\Omega)^3 \}$ is endowed with the norm given by $\norma{u}^2_{H(\curl, \Omega)} =  \norma{u}^2 + \norma{\curl u}^2$.\\
$\bullet$ $H_0(\curl, \Omega)$ is the closure of $C^\infty_c(\Omega)^3$ with respect to $\norma{\cdot}_{H(\curl, \Omega)}$. If $\p \Omega$ is sufficiently regular, it can also be described as
\[
H_0(\curl, \Omega) = \{ u \in L^2(\Omega)^3 : \curl u \in L^2(\Omega)^3, \, \nu \times u = 0 \,\, \textup{on $\p \Omega$} \}
\]
$\bullet$ The differential expression $\curl$ is associated with two self-adjoint realisations in $L^2(\Omega)^3$. $\curl$ is the maximal one, with domain $\dom(\curl) = H(\curl, \Omega)$; $\curl_0$ the minimal one with domain $\dom(\curl_0) = H_0(\curl, \Omega)$. Note that $(\curl_0)^* = \curl$. \\
$\bullet$ $H(\Div, \Omega) = \{ u \in L^2(\Omega)^3 : \Div u \in L^2(\Omega) \}$, $\norma{u}^2_{H(\Div, \Omega)} = \norma{u}^2 + \norma{\Div u}^2$. \\
$\bullet$ $H(\Div 0, \Omega)$ is the subspace of $L^2(\Omega)^3$ of vector fields with null (distributional) divergence.\\
$\bullet$ Given a linear operator $T : \cH \supset \dom(T)\to \cH$,
\begin{align*}
&\sigma(T) = \{ \omega \in \C \,:\, T - \omega \:\: \textup{is not boundedly invertible} \}, \\
&\sigma_{\rm app}(T) = \{ \omega \in \C \,:\, \exists (u_n)_n \subset \dom(T),\, \norma{u_n} = 1,\, \norma{(T - \omega)u_n} \to 0 \} \\
&\sigma_e(T) := \{ \omega \in \C \,:\, \exists (u_n)_n \subset \dom(T),\, \norma{u_n} = 1,\, u_n \rightharpoonup 0,\, \norma{(T - \omega)u_n} \to 0 \}.
\end{align*}
For non-selfadjoint operators in complex Banach spaces, there are 5 non-equivalent definitions of essential spectrum, see \cite[Chp.9, p.414]{EE}, denoted by $\sigma_{ek}(T)$, $k = 1,\dots, 5$. Note that $\sigma_e(T) := \sigma_{e2}(T)$.\\
$\bullet$ Let $D \subset \C$ be a domain. Given $\omega \mapsto \cL(\omega)$, $\omega \in D$, a holomorphic family of closed linear operators with the same $\omega$-independent domain $\dom(\cL) = \dom(\cL(\omega))$, $\omega \in D$, we define $\sigma(\cL) = \{ \omega \in D : 0 \in \sigma(\cL(\omega)) \}$, and similarly we define point, continuous, residual, essential spectrum by replacing $\sigma$ with $\sigma_x$, $x = p, c, r, e$ in the previous formula.

\subsection{Operator formulations and main results}
The system (\ref{PDEs}) has several operator formulations, which we now outline. The equations hold in a (bounded or
unbounded) Lipschitz domain $\Omega\subseteq {\mathbb R}^3$, in which the functions
$\theta_e$ and $\theta_m$  are assumed to be bounded and non-negative. The Fourier transform $\hat{E}$ of the
electric field $E$ is supposed to lie in the space $H_0(\curl, \Omega)$, which encodes the boundary condition $\nu \times \hat{E} = 0$ on $\p \Omega$, while $\hat{H}$ is assumed
to lie in $H(\curl, \Omega)$. The first operator formulation of \eqref{PDEs} is then
\[  \cL(\omega) \binom{\hat{E}}{\hat{H}} = {\bf 0}, \]
in which $\omega\mapsto \cL(\omega)$ is the $2\times 2$ rational block-matrix pencil given by
\begin{equation}\label{eq:intro1}
\cL(\omega) = \begin{pmatrix}
- \omega + \frac{\theta_e^2}{\omega + i \gamma_e} & i \curl \\
-i\curl_0 & - \omega + \frac{\theta_m^2}{\omega + i \gamma_m}
\end{pmatrix}, \quad \dom(\cL) = H_0(\curl, \Omega) \oplus H(\curl, \Omega);
\end{equation}
see subsection \ref{notation} for definitions of the Sobolev spaces, $\curl$, $\curl_0$, etc.
It is not difficult to show (see \cite{MR3543766}) that the Drude-Lorentz pencil $\cL(\omega)$ is the first Schur
complement of the `companion' block operator matrix
\begin{equation}\label{def:cA}
\cA =
\begin{pmatrix}
A & B \\
B|_{\dom(A)} & -iD
\end{pmatrix}
\end{equation}
in $L^2(\Omega)^6$, with domain $\dom(\cA) = H_0(\curl, \Omega) \oplus H(\curl, \Omega) \oplus L^2(\Omega)^3 \oplus L^2(\Omega)^3$ and
\begin{equation}\label{ABDdef}
A =
\begin{pmatrix}
0 & i \curl \\
-i \curl_0 & 0
\end{pmatrix}, \quad
B = \begin{pmatrix}
\theta_e & 0 \\
0 & \theta_m
\end{pmatrix}, \quad
D = \begin{pmatrix}
\gamma_e & 0 \\
0 & \gamma_m
\end{pmatrix};
\end{equation}
in other words,
\begin{equation} \label{intro:cL}
\cL(\omega) = A - \omega - B (-iD - \omega)^{-1} B.
\end{equation}
In particular the spectrum of $\cA$ coincides with the spectrum of $\cL$ outside the two poles $-i \gamma_e, -i \gamma_m$. We will exploit this connection and the results in \cite{BFMT} to further decompose the spectrum of $\cL$ into the spectra of two operator pencils.
This method allows us to generalise the known spectral analysis of the Drude-Lorentz model in the following ways:\\
(1) In our assumptions, $0 \in \sigma_e(A)$, where $A$ is defined as in \eqref{ABDdef}, since $\nabla \dot{H}^1_0(\Omega)\oplus\nabla \dot{H}^1(\Omega)$ is an infinite-dimensional kernel of $A$. This is not allowed by many results in the literature, e.g. \cite[Proposition 2.2]{MR3543766}, where it is required that $A$ have compact resolvent. \\
(2) We allow the domain $\Omega$ to be unbounded. Consequently, contributions to $\sigma_e(\cA)$ are expected from infinity. \\
(3) We allow the coefficients $\theta_e$, $\theta_m$ to be \textit{both} non-constant, even though we assume that they are asymptotically constant. \\[0.1cm]
On the other hand, to avoid very singular situations we restrict ourselves to the case where $\gamma_e$ and $\gamma_m$ (namely, the position of the poles) are fixed.\\
A large part of the spectral analysis has been achieved not by inspecting directly the operator pencil $\cL$, but its first Schur complement $\cS_1$, defined on $\dom(\cS_1) := \{u \in H_0(\curl, \Omega): \Theta_m(\omega)^{-1} \curl_0 u \in H(\curl, \Omega)\}$ by
\begin{equation}
\label{S1}
\cS_1(\omega) = \curl \Theta_m(\omega)^{-1} \curl_0 - \frac{\Theta_e(\omega)}{(\omega + i \gamma_e)(\omega + i \gamma_m)},
\end{equation}
for $\omega \in \C \setminus (\{-i\gamma_e,-i\gamma_m\}\cup\overline{W(\Theta_m)})$; here  the notation $W(\cdot)$ denotes the numerical range of an operator or a pencil, see Def. \ref{def:W}, and
\begin{equation}\label{Thetadef}
\Theta_e(\omega) :=  \omega^2 + \omega i \gamma_e - \theta_e^2, \quad \Theta_m(\omega) := \omega^2 + \omega i \gamma_m - \theta_m^2,
\end{equation}
are $\omega$-quadratic multiplication pencils.
An important technical point is realising that $\cS_1(\omega)$, in general, cannot be defined either as an $m$-accretive operator or as a self-adjoint operator 
\emph{independently of $\omega \in \C \setminus \{-i \gamma_e, - i \gamma_m\}$}.
We overcome this obstacle by introducing a set
\begin{equation}
\label{def:Sigma}
\Sigma = \{\omega\in\C \, | \, \re(\omega)\im(\omega+i\gamma_m/2)\neq 0\},
\end{equation}
decomposing $\Sigma$ as a disjoint union $\Sigma = \Sigma_1 \dot{\cup}\Sigma_2$, and defining $\cS_1(\omega)$ in two different ways, depending on whether $\omega \in \Sigma_1$ or in $\Sigma_2$. In fact, $i\cS_1$ is $m$-accretive for $\omega \in \Sigma_1 \subset \Sigma$, while it is $m$-dissipative for $\omega \in \Sigma_2 = \Sigma \setminus \overline{\Sigma_1}$.\\
Note also that the relation between the spectrum of the operator pencil $\cL$ and that of $\cS_1$ is completely non-trivial. This is a frequently faced problem in the study of spectra of metamaterials where the dependence on the spectral parameter is non-linear, see e.g.\cite{MR4191388} where similar hurdles were encountered in the study of the essential spectrum of a negative metamaterial in a bounded domain. From our perspective, these difficulties are natural consequences of the lack of a \emph{diagonal dominance} pattern (in the sense of \cite[Def. 2.2.1]{TreB}) for the block operator matrices involved. In \cite[p.1187]{MR4191388} the operator matrix is upper-dominant. In our case, $\cL(\omega)$ is off-diagonally dominant, since the off-diagonal entries are differential operators of order 1 while the diagonal entries are of order 0. Unfortunately the off-diagonal entries are not boundedly invertible, so
standard theorems relating the spectrum of an operator matrix and that of its Schur complements, such as \cite[Thm. 2.3.3]{TreB}, do not apply. We overcome these difficulties by defining the Schur complement $\cS_1$ locally and by improving the abstract result \cite[Prop. 2.10.1(c)]{TreB}, which would allow only bounded and self-adjoint entries on the main diagonal. We note in passing that the question raised in \cite[p.1187]{MR4191388} can be partially solved by applying \cite[Prop. 2.10.1(b)]{TreB}.

Our first main result is a decomposition of the essential spectrum, see Proposition \ref{sigmaLS} and Theorem
\ref{sigma-ess}, which is summarised in the following theorem.

\begin{theorem}\label{thmintro}
Suppose that $\Omega$ is an unbounded Lipschitz open set of $\R^3$ and $\theta_e$, $\theta_m$ are asymptotically constant. Let $P_\nabla$ be the orthogonal projection from $ L^2(\Omega)^3\!=\!\nabla \dot H^1_0(\Omega) \oplus H(\Div 0,\Omega)$ onto $\nabla \dot{H}^1_0(\Omega)$. Let
\begin{equation} \label{eq: G}
G(\omega) = -P_{\nabla} \left( \frac{\Theta_e(\omega)}{(\omega + i \gamma_e)(\omega + i \gamma_m)} \right) P_{\nabla}
\end{equation}
Let $\cS_\infty$ be the pencil $\cS_1$ restricted to divergence-free vector fields and with coefficients $\theta_e, \theta_m$ constantly equal to their value at infinity. Finally, let $\Sigma$ be as in \eqref{def:Sigma}. Then, with $\sigma_{ek}$ denoting the essential spectrum as in \cite[Chp. IX, p.414]{EE},
\begin{equation} \label{intro:dec}
 \sigma_{ek}(\cL) \cap \Sigma = \sigma_{ek}(\cS_1) \cap \Sigma = (\sigma_{ek}(\cS_\infty) \cup \sigma_{ek}(G)) \cap \Sigma, \quad k=1,2,3,4,
\end{equation}
where $\sigma_{ek}(\cS_\infty)$ is described in Prop. \ref{prop: spectrum infty} and
\[
\sigma_{ek}(G) \subset
\begin{cases}- i [0, \gamma_e), \quad &\textup{if $- \frac{\gamma_e^2}{4} + \norma{\theta^2_e}_{\infty} \leq 0$,} \\
- i [0, \gamma_e) \cup \left(-d_e -i \frac{\gamma_e}{2}, d_e -i \frac{\gamma_e}{2} \right) \quad &\textup{if $- \frac{\gamma_e^2}{4} + \norma{\theta^2_e}_{\infty} > 0$.}
\end{cases}
\]
with $d_e \in \left(- \sqrt{- \frac{\gamma_e^2}{4} + \norma{\theta^2_e}_{\infty}}, \sqrt{- \frac{\gamma_e^2}{4} + \norma{\theta^2_e}_{\infty}} \right)$.
\end{theorem}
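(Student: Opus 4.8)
The plan is to obtain \eqref{intro:dec} as the composition of two reductions, $\cL\rightsquigarrow\cS_1$ and $\cS_1\rightsquigarrow\cS_\infty\oplus G$, followed by an elementary spectral computation for $G$; the two reductions are Proposition \ref{sigmaLS} and Theorem \ref{sigma-ess}, and I would assemble these together with Proposition \ref{prop: spectrum infty}. For the first reduction I would use that, by \eqref{intro:cL} and \eqref{def:cA}, $\cL$ is the first Schur complement of the companion matrix $\cA$, so that $\sigma_{ek}(\cA)$ and $\sigma_{ek}(\cL)$ agree away from the poles $-i\gamma_e,-i\gamma_m$, in particular on $\Sigma$; then I would pass from $\cL(\omega)$ to its own first Schur complement $\cS_1(\omega)$. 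Because $\cL(\omega)$ is \emph{off-diagonally} dominant --- the off-diagonal entries $i\curl$ and $-i\curl_0$ are first order, the diagonal entries are bounded multiplication operators --- and the off-diagonal entries are not boundedly invertible, the classical Schur-complement theorems such as \cite[Thm.~2.3.3]{TreB} do not apply; instead one uses the piecewise construction set up above, so that for $\omega\in\Sigma_1$ (resp.\ $\Sigma_2$) the operator $i\cS_1(\omega)$ is $m$-accretive (resp.\ $m$-dissipative), hence closed with nonempty resolvent set, and then a sharpening of \cite[Prop.~2.10.1(c)]{TreB} --- admitting unbounded, merely sectorial diagonal entries, in the spirit of \cite{BFMT} --- yields, for such $\omega$, the equivalence $0\in\sigma_{ek}(\cL(\omega))\Leftrightarrow 0\in\sigma_{ek}(\cS_1(\omega))$ for $k=1,\dots,4$.

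For the second reduction I would split $L^2(\Omega)^3=\nabla\dot H^1_0(\Omega)\oplus H(\Div 0,\Omega)$. On gradient fields $\curl_0$ annihilates, so the second-order term of $\cS_1(\omega)$ in \eqref{S1} vanishes there and the compression of $\cS_1(\omega)$ to $\nabla\dot H^1_0(\Omega)$ is exactly $G(\omega)$. In the $2\times2$ block form relative to this splitting, the off-diagonal blocks are built from multiplication by $\theta_e^2$; since $\theta_e,\theta_m$ are asymptotically constant, $\theta_e^2$ and $\theta_m^2$ differ from their values at infinity by symbols decaying at spatial infinity, and likewise the transverse diagonal block differs from $\cS_\infty(\omega)$ by such a perturbation (here one uses that $\curl\curl_0$ acts like $-\Delta$ on divergence-free fields). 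Consequently the off-diagonal coupling is relatively compact with respect to $\cS_\infty(\omega)\oplus G(\omega)$ --- it is a decaying multiplier composed with an operator that gains regularity, whence Rellich on bounded subdomains together with the smallness of the tail --- and the transverse variable-coefficient block has the same essential spectrum as $\cS_\infty(\omega)$ by the decomposition principle, which uses the compactness of the embeddings $H_0(\curl,\Omega')\cap H(\Div 0,\Omega')\hookrightarrow L^2(\Omega')$ on bounded $\Omega'$. Since $\sigma_{ek}$, $k=1,\dots,4$, is invariant under the relevant (relatively compact, resp.\ decomposition-principle) perturbations while $\sigma_{e5}$ is not, the decomposition holds exactly for $k=1,\dots,4$; the constant-coefficient $\sigma_{ek}(\cS_\infty)$ is then computed by Fourier analysis in Proposition \ref{prop: spectrum infty}.

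It remains to enclose $\sigma_{ek}(G)$. On $\nabla\dot H^1_0(\Omega)$ one has $P_\nabla=\id$, so $G(\omega)=-\big((\omega+i\gamma_e)(\omega+i\gamma_m)\big)^{-1}\big((\omega^2+i\gamma_e\omega)\,\id-P_\nabla\,\theta_e^2\,P_\nabla\big)$, and $P_\nabla\,\theta_e^2\,P_\nabla$ is bounded and self-adjoint with numerical range, hence spectrum, hence essential spectrum, in $[0,\norma{\theta_e^2}_\infty]$ because $0\le\theta_e^2\le\norma{\theta_e^2}_\infty$. Thus $0\in\sigma_{ek}(G(\omega))$ forces $\omega^2+i\gamma_e\omega=c$ for some $c\in[0,\norma{\theta_e^2}_\infty]$, i.e.\ $\omega=-\tfrac{i\gamma_e}{2}\pm\tfrac12\sqrt{4c-\gamma_e^2}$. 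If $-\tfrac{\gamma_e^2}{4}+\norma{\theta_e^2}_\infty\le 0$ the square root is purely imaginary for every admissible $c$, so $\omega\in -i[0,\gamma_e]$, and the pole $-i\gamma_e$ is excluded because $G$ is undefined there; if $-\tfrac{\gamma_e^2}{4}+\norma{\theta_e^2}_\infty>0$ the values $c>\gamma_e^2/4$ contribute in addition a horizontal segment at height $-\gamma_e/2$ of half-width $d_e<\sqrt{-\gamma_e^2/4+\norma{\theta_e^2}_\infty}$ (strict, since $\norma{\theta_e^2}_\infty$ is only an essential supremum). This is the claimed inclusion, and \eqref{intro:dec} follows by combining the two reductions.

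The main obstacle is the first reduction: establishing $\sigma_{ek}(\cL)\cap\Sigma=\sigma_{ek}(\cS_1)\cap\Sigma$ without diagonal dominance, without bounded invertibility of the off-diagonal blocks, and without a compact-resolvent hypothesis on $A$ --- this is exactly what forces $\cS_1$ to be defined separately on $\Sigma_1$ and $\Sigma_2$ and what requires \cite[Prop.~2.10.1(c)]{TreB} to be strengthened. A secondary difficulty is carrying out the decomposition principle on a possibly unbounded Lipschitz domain, where the Helmholtz decomposition and the relevant compact embeddings on bounded pieces have to be handled carefully.
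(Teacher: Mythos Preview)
Your overall architecture is correct and matches the paper: the theorem is indeed assembled from Proposition~\ref{sigmaLS}, Theorem~\ref{sigma-ess}, and Proposition~\ref{prop: spectrum infty}, and your argument for the enclosure of $\sigma_{ek}(G)$ via the quadratic $\omega^2+i\gamma_e\omega=c$ with $c\in[0,\|\theta_e^2\|_\infty]$ is exactly right (the paper does not spell this out).

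There is, however, a genuine gap in your second reduction. You assert that ``the off-diagonal coupling is relatively compact with respect to $\cS_\infty(\omega)\oplus G(\omega)$'', justified by ``a decaying multiplier composed with an operator that gains regularity''. This works for the $(1,2)$-block $P_\nabla\big(\theta_e^2-(\theta_e^0)^2\big)P_{\ker\Div}$, because $\dom(\cS_m)\subset H_0(\curl,\Omega)\cap H(\Div 0,\Omega)$ supplies regularity and Proposition~\ref{thm: compactness} applies. It fails for the $(2,1)$-block $P_{\ker\Div}\big(\theta_e^2-(\theta_e^0)^2\big)P_\nabla$: this block acts on $\nabla\dot H^1_0(\Omega)$, where the graph norm of the bounded operator $G(\omega)$ is equivalent to the $L^2$-norm, and a decaying multiplication operator is \emph{not} compact on $L^2$ --- there is no regularity gain on the gradient side. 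So the off-diagonal part is not a relatively compact perturbation of the diagonal, and your route to the union formula breaks here.

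The paper's fix is not to make the matrix diagonal-up-to-compact, but \emph{triangular}. One adds the $\cS_1(\omega)$-compact operator $M(\omega)=(V_e(\omega)-V_{e,\infty}(\omega))P_{\ker\Div}$; then $\cT(\omega)=\cS_1(\omega)+M(\omega)$ has $(1,2)$-block $P_\nabla\big(\cC_m(\omega)-V_{e,\infty}(\omega)\big)P_{\ker\Div}$, which is \emph{exactly zero} (the $\curl$ output is divergence-free and $V_{e,\infty}$ is a scalar). The resulting lower-triangular block matrix is handled by Theorem~\ref{thm: ess spec}, which requires of the surviving $(2,1)$-block only boundedness and $\dom(A)\subset\dom(C)$, not compactness. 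The passage $\cS_m\rightsquigarrow\cS_\infty$ is then done separately via the compact resolvent difference of Proposition~\ref{thm: difference res}. Your sketch is easily repaired by replacing ``both off-diagonals relatively compact'' with ``upper-right block killed by a compact modification, then triangular decomposition''.
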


\begin{rem} In the case where $\Omega$ is bounded, decomposition \eqref{intro:dec} continues to hold. In such a case $\cS_\infty$ can be defined by freezing the coefficients $\theta_e$, $\theta_m$ at arbitrary positive values $\theta^0_e$, $\theta_m^0$; in fact, $\sigma_e(\cS_\infty) = \emptyset$ independently of the chosen values $\theta^0_e$, $\theta_m^0$. This can be proved along the lines of \cite[Theorem 6]{MR3942228}. Therefore, in bounded domains the only contribution to the essential spectrum comes from $\sigma_{e}(G)$. See also Remark \ref{rem:bdd_dec}.
\end{rem}

The explicit computation of $\sigma_{ek}(G)$ generally depends upon the regularity of the function $\theta_e$. If it is continuous, we obtain the equality
\[\sigma_{ek}(G) = \ran\biggl(- \frac{i \gamma_e}{2} - \sqrt{- \frac{\gamma_e^2}{4} + \theta_e^2}\biggr) \cup \,\ran\biggl(- \frac{i \gamma_e}{2} + \sqrt{- \frac{\gamma_e^2}{4} + \theta_e^2}\biggr).\]
Otherwise, the essential spectrum depends on the geometry of the set of discontinuities of $\theta_e$. If $\theta_e$ is a step function, some computations can be found in Section \ref{sec:example} below, which are based on the analytic results for transmission problems of Ola \cite{MR1362712} and Pankrashkin \cite{MR4041099}, initially investigated in the seminal paper \cite{MR782799}. In the example of Section \ref{sec:example}, $\sigma_{ek}(G)$  consists of at most six distinct points. For more complicated examples where the discontinuity interfaces are allowed to have non-convex corners, bands of essential spectrum can be generated, see \cite{MR1680769}. These problems have been tackled recently using the $\tt{T}$-coercivity method, see \cite{MR3200087}.

Our second fundamental result concerns spectral approximation of $\cL$ by the truncation method. This involves replacing $\Omega$ with a bounded domain $\Omega_n \subset \Omega$, and $\cL$ with $\cL_n$, which will be associated with Problem \eqref{eq:intro1} with the same \emph{electric boundary conditions}; when $n \to \infty$, $\Omega_n$ monotonically increases and covers the whole of $\Omega$. The question is whether $\sigma(\cL_n)$ will be `close to' $\sigma(\cL)$ as $n \to \infty$. This is already an interesting problem for self-adjoint operators having band-gap spectrum \cite{MR2640293}: indeed, the gaps in the essential spectrum may contain eigenvalues of the approximating operators as $n \to \infty$, or equivalently, the spectral gaps may contain points of the \emph{spectral pollution} set, given by
\[
\sigma_{\rm poll}((\cL_n)_n) = \{ \omega \in \rho(\cL) \,:\, \exists I \subset \N, \: \textup{$I$ infinite},\: \omega_n \in \sigma(\cL_n),\,\, \omega_n \to \omega, \, n \in I,\,  n \to \infty \}.
\]
For the Maxwell pencil $\cL$ defined in \eqref{intro:cL}, the presence of spectral pollution for the approximating sequence $(\cL_n)_n$ is almost inevitable, since $\cL_n$ is a non-self-adjoint, rational pencil of operators for every $n$, having non-trivial essential spectrum even in bounded domains. It is then of pivotal importance to determine where spectral pollution may appear; and, on the other hand, which spectral points $\omega \in \sigma(\cL)$ can be approximated exactly via domain truncation.
Theorem \ref{thm: final} shows that spectral pollution for the sequence $\cL_n$, $n \in \N$ can only occur in the {\it essential numerical range} $W_e(\cS_\infty)$ of the constant coefficient pencil $\cS_\infty$ given on a suitable domain by 
\begin{equation}\label{s1inf}
\cS_{\infty}(\omega) = \Theta_{m, \infty}(\omega)^{-1}\curl\curl_0 - \frac{\Theta_{e, \infty}(\omega)}{(i\gamma_e + \omega)(i \gamma_m + \omega)},
\end{equation}
in which
$\Theta_{e,\infty}(\Omega) = \omega^2 + i \gamma_e \omega - (\theta_e^{0})^2$,
$\Theta_{m,\infty}(\Omega) = \omega^2 + i \gamma_m \omega - (\theta_m^{0})^2$,
and $\theta_e^0$, $\theta_m^0$ are the values of $\theta_e$ and $\theta_m$ at infinity. We recall that
\[
\begin{split}
W_e(\cS_\infty)= \{ &\omega \in \C : \,\exists u_n \in \dom(\cS_\infty(\omega)),\,n \in \N, \\
&\norma{u_n} = 1,\: u_n \rightharpoonup 0, (\cS_\infty(\omega) u_n, u_n) \to 0, \, n \to \infty \}.
\end{split}
\]
In particular, the set of spectral pollution is always contained in the union of one-dimensional curves in $\C$, improving in a substantial way abstract enclosures for the spectral pollution set in term of the essential numerical range of $\cL$, which in this case would establish only that spectral pollution is contained in the infinite horizontal strip $\R \times [-\gamma_e, 0]$.

The structure of this article is as follows. In Section \ref{sec:numr} we first establish a basic numerical range enclosure for the whole of $\sigma(\cL)$, see Prop. \ref{prop:numrenc}; we then define the operator pencil $\cS_1$ in a rigorous way in Thm \ref{thm: s_1 form_refined} and we prove that the spectral properties of $\cL$ are retained by $\cS_1$ inside $\Sigma$, see Prop. \ref{sigmaLS}. This result is then exploited to prove a refined numerical range enclosure, see Thm. \ref{thm:refnumran}. In Section \ref{sec:ess_spec} we prove Thm. \ref{sigma-ess}, which establishes that $\sigma_e(\cS_1) \cap \Sigma$ can be decomposed in the union of the essential spectra of two operator pencils, $\cS_\infty$ capturing the behaviour at infinity due to divergence-free vector fields; $G$ capturing the contribution of gradients. Section \ref{sec:red_op} contain qualitative results regarding the essential spectra of the reduced operators $\cS_\infty$ and $G$. In Section \ref{sec:lim_ess_spec} we then prove Thm. \ref{thm: final}, establishing that spectral pollution for the domain truncation method is contained in $W_e(\cS_\infty)$, and an approximation property for the isolated eigenvalues of $\cL$. Finally, Section \ref{sec:example} contains explicit computations for the case of locally constant functions $\theta_e(x) = \alpha_e \chi_K(x)$, $\theta_m(x) = \alpha_m \chi_K(x)$, which are identically zero at infinity.

\section{Numerical range, Schur complements and spectral enclosures}
\label{sec:numr}

Let $\cH = L^2(\Omega)^3\oplus L^2(\Omega)^3$.
The operators $A$, $B$ and $D$ appearing
in the definition (\ref{def:cA}) of $\cA$ have domains $\dom(A) = H_0(\curl, \Omega) \oplus H(\curl, \Omega)$, $\dom(B) = \dom(D) = \cH$; the fact that
$\dom(B)=\dom(D)=\cH$ relies on our assumption that the functions $\theta_e$ and $\theta_m$ lie in $L^\infty(\Omega, \R)$.
Since the off-diagonal operators $B$ and $D$ are bounded,
$\cA$ is a diagonally dominant, closed $\cJ$-self-adjoint operator matrix, where $\cJ = \diag(i,-i, i, -i) J$, and $J$ is the standard componentwise complex conjugation. In particular, $\sigma(\cA) = \sigma_{app}(\cA)$. Due to \cite[Thm 2.3.3 (ii)]{TreB}, $\sigma(\cA) \setminus \sigma(-iD) = \sigma(\cL)$, with equality for the point, continuous, and essential spectrum as well.

\begin{prop} \label{prop: num range 1}
Let $M:= \max\{\gamma_e,\gamma_m\}$. Then the numerical range $W(\cA)$ of the block operator matrix $\cA$ is contained in the
strip $\R \times [-iM,0]$.
\end{prop}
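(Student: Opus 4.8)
The plan is to evaluate the quadratic form $(\cA w, w)$ directly on normalised elements of $\dom(\cA)$. Write a generic $w \in \dom(\cA)$ as $w = (u, v)$ with $u = (\hat E, \hat H) \in \dom(A) = H_0(\curl,\Omega)\oplus H(\curl,\Omega)$ and $v \in \cH = L^2(\Omega)^3\oplus L^2(\Omega)^3$, so that $w \in \cH \oplus \cH \cong L^2(\Omega)^6$, and assume $\norma{w}^2 = \norma{u}^2 + \norma{v}^2 = 1$. From the block form \eqref{def:cA} one expands
\begin{equation*}
(\cA w, w)_{\cH\oplus\cH} = (A u, u) + (B v, u) + (B u, v) - i (D v, v),
\end{equation*}
where in the cross term arising from the $(2,1)$-block the operator $B|_{\dom(A)}$ may be replaced by $B$ inside the inner product, since $B$ is everywhere defined on $\cH$.

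First I would isolate the real, "Hermitian" contributions. The operator $A$ is self-adjoint: it is the off-diagonal block matrix built from $i\curl$ and $-i\curl_0$, and since $(\curl_0)^* = \curl$ this matrix is symmetric and self-adjoint on $\dom(A)$, whence $(Au, u) \in \R$. Next, $B = \diag(\theta_e, \theta_m)$ is a bounded self-adjoint multiplication operator because $\theta_e, \theta_m \in L^\infty(\Omega, \R)$; therefore $(Bv, u) + (Bu, v) = (Bv, u) + \overline{(Bv,u)} = 2\re(Bv, u) \in \R$. Thus the first three terms contribute only to $\re(\cA w, w)$, which is left completely unconstrained.

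The imaginary part then comes solely from the last term: $\im\bigl( -i(Dv,v) \bigr) = -(Dv, v)$. Since $D = \diag(\gamma_e, \gamma_m)$ with $\gamma_e, \gamma_m > 0$, the operator $D$ is bounded, self-adjoint, and $0 \le D \le M$ with $M = \max\{\gamma_e,\gamma_m\}$; hence $0 \le (Dv,v) \le M\norma{v}^2 \le M$, using $\norma{v} \le \norma{w} = 1$. Therefore $\im(\cA w, w) = -(Dv,v) \in [-M, 0]$ while $\re(\cA w, w)$ ranges within $\R$, which gives $W(\cA) \subseteq \R \times [-iM, 0]$.

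I expect essentially no analytical obstacle here; the points requiring a little care are just (a) justifying the form expansion on all of $\dom(\cA)$ — valid because $A$ is applied only to $u \in \dom(A)$ while $B$, $D$ are bounded and everywhere defined — and (b) invoking the self-adjointness of $A$ together with the realness and boundedness of $B$ and $D$ so as to isolate the single sign-definite term $-i(Dv,v)$. One may additionally remark that, since $D$ is strictly positive, $\im(\cA w, w) < 0$ whenever $v \neq 0$, so the bounding line $\im = 0$ is reached only on the "Maxwell part" $v = 0$.
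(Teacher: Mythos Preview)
Your proof is correct and follows essentially the same approach as the paper: expand the quadratic form $(\cA w,w)$ using the block structure, observe that the contributions from $A$ and $B$ are real by self-adjointness, and bound the imaginary part $-(Dv,v)$ using $0 \le D \le M$ and $\norma{v}\le 1$. Your write-up is somewhat more detailed in justifying the reality of the $A$- and $B$-terms, but the argument is the same.
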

\begin{proof}
If $\omega \in W(\cA)$, by definition there exists $(u,v) \in \cH$, $\norma{u}^2 + \norma{v}^2 = 1$ such that
\[
(Au,u) + 2 \re (Bv,u) - i (Dv,v) = \omega.
\]
Hence, $0 \geq \im \omega = - \re (Dv, v) = - \gamma_e \norma{v_1}^2 - \gamma_m \norma{v_2}^2 \geq - \max\{\gamma_e, \gamma_m\}$.
\end{proof}

\begin{rem}
The previous enclosure holds independently on the domain $\Omega$ and it holds for non-constant, positive, and bounded $\gamma_e$ and $\gamma_m$ by replacing them with $\norma{\gamma_e}_{\infty}$ and $\norma{\gamma_m}_{\infty}$.
\end{rem}

On the other hand, $\C \setminus \{-i \gamma_e, -i \gamma_m\} \ni \omega \mapsto \cL(\omega)$ defines a pencil of block operator matrices in $\cH$, given explicitly by
\begin{equation} \label{def: cL}
\cL(\omega) = \begin{pmatrix}
-\omega + \frac{\theta_e^2}{(\omega + i \gamma_e)} & i \curl \\
-i\curl_0 & -\omega + \frac{\theta_m^2}{(\omega + i \gamma_m)}
\end{pmatrix}
\end{equation}
where $\dom(\cL(\omega)) = H_0(\curl, \Omega) \oplus H(\curl, \Omega)$.

\begin{definition} \label{def:W}
Given a linear operator $T$ with domain $\dom(T) \subset H$ on a Hilbert space $H$, the numerical range of $T$ is
\[
W(T) = \{ \omega \in \C :\, \exists u \in \dom(T), \, \norma{u} = 1, \, (T u, u) = \omega \}.
\]
Let $\cT$ be a pencil of linear operators $\cT(\omega)$ with $\dom(\cT(\omega)) \subset H$. We define
\[
W(\cT) = \{ \omega \in \C :\, 0 \in \ov{W(\cT(\omega))} \}.
\]
\end{definition}

\begin{rem}
Note that $W(\cT)$ is denoted by $W_{\Psi}(\cT)$ in the recent article \cite[Equation (1.2)]{MR4447382}, \emph{cf.} \cite{BM}. The main reason to use this set in place of the classical one $\widetilde{W}(\cT) = \{ \omega \in \C : 0 \in W(\cT(\omega)) \}$ is that in general $\sigma_{\rm app}(\cT)$ is not a subset of the closure of $\widetilde{W}(\cT)$. Instead, it is immediate to check that $\sigma_{\rm app}(\cT) \subset W(\cT)$.
\end{rem}

\begin{prop}
\label{prop:numrenc} 
The numerical range $W(\cL)$ is contained in the non-convex
 subset of $\mathbb C$ described by the inequality
 \begin{equation}\label{enc} 0 \leq -\im\omega \leq \min\left(M, \frac{\gamma_e \| \theta_e \|_\infty^2 + \gamma_m \|\theta_m\|_\infty^2}{(\re\omega)^2} \right),
 \end{equation}
  in which $M=\max\{\gamma_e,\gamma_m\}$.

\end{prop}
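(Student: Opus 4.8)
The plan is to read the enclosure off directly from the imaginary part of the quadratic form of $\cL(\omega)$, the crucial point being that the unbounded off-diagonal $\curl$-entries contribute only to the real part. Fix $\omega\in\C\setminus\{-i\gamma_e,-i\gamma_m\}$ and a unit vector $(u,v)\in\dom(\cL(\omega))=H_0(\curl,\Omega)\oplus H(\curl,\Omega)$. Since $(\curl_0)^*=\curl$, for such $u,v$ one has $\langle\curl_0 u,v\rangle=\langle u,\curl v\rangle$, hence the cross terms satisfy $\langle i\curl v,u\rangle+\langle-i\curl_0 u,v\rangle\in\R$; calling this real number $\rho(u,v)$ and using that $\theta_e,\theta_m$ are real-valued together with $\norma{u}^2+\norma{v}^2=1$, a short computation gives
\[
\langle\cL(\omega)(u,v),(u,v)\rangle=-\omega+\frac{\norma{\theta_e u}^2}{\omega+i\gamma_e}+\frac{\norma{\theta_m v}^2}{\omega+i\gamma_m}+\rho(u,v),
\]
with $\norma{\theta_e u}^2\in[0,\norma{\theta_e}_\infty^2]$ and $\norma{\theta_m v}^2\in[0,\norma{\theta_m}_\infty^2]$.

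Next I would use the definition of $W(\cL)$: if $\omega\in W(\cL)$ then $0\in\overline{W(\cL(\omega))}$, so there are unit vectors $(u_n,v_n)\in\dom(\cL(\omega))$ with $\langle\cL(\omega)(u_n,v_n),(u_n,v_n)\rangle\to0$. Setting $s_n=\norma{\theta_e u_n}^2\in[0,\norma{\theta_e}_\infty^2]$ and $t_n=\norma{\theta_m v_n}^2\in[0,\norma{\theta_m}_\infty^2]$ and taking imaginary parts in the identity above (the real term $\rho(u_n,v_n)$ drops out), one obtains
\[
-\im\omega-s_n\,\frac{\im\omega+\gamma_e}{|\omega+i\gamma_e|^2}-t_n\,\frac{\im\omega+\gamma_m}{|\omega+i\gamma_m|^2}\longrightarrow0.
\]
Thus $-\im\omega=\lim_{n\to\infty}\bigl(s_n(\im\omega+\gamma_e)|\omega+i\gamma_e|^{-2}+t_n(\im\omega+\gamma_m)|\omega+i\gamma_m|^{-2}\bigr)$, an identity involving bounded, nonnegative coefficients $s_n,t_n$.

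The three inequalities in \eqref{enc} then follow by elementary sign analysis of this right-hand side. If $\im\omega>0$, each summand is $\ge0$, so the limit is $\ge0$ and $-\im\omega\ge0$, a contradiction; hence $\im\omega\le0$. If $-\im\omega>M=\max\{\gamma_e,\gamma_m\}$, then $\im\omega+\gamma_e<0$ and $\im\omega+\gamma_m<0$, so each summand is $\le0$, the limit is $\le0$, and again $-\im\omega\le0$ is contradictory; hence $-\im\omega\le M$. For the remaining bound assume $\re\omega\ne0$; using $|\omega+i\gamma_e|^2=(\re\omega)^2+(\im\omega+\gamma_e)^2\ge(\re\omega)^2$ and, since $\im\omega\le0$, $\im\omega+\gamma_e\le\gamma_e$, one checks that $s_n(\im\omega+\gamma_e)|\omega+i\gamma_e|^{-2}\le\gamma_e\norma{\theta_e}_\infty^2(\re\omega)^{-2}$ in both cases $\im\omega+\gamma_e\ge0$ and $\im\omega+\gamma_e<0$, and likewise for the $\gamma_m$-term; passing to the limit yields $-\im\omega\le(\gamma_e\norma{\theta_e}_\infty^2+\gamma_m\norma{\theta_m}_\infty^2)(\re\omega)^{-2}$. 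Combined with $0\le-\im\omega$ and $-\im\omega\le M$, this is exactly \eqref{enc}.

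I do not expect a genuine obstacle here; the one point that must be handled with care is recognising that the unbounded operators $\curl$, $\curl_0$ enter the quadratic form only through a real quantity, so that no boundedness or compactness of $\curl v_n$ is ever needed, and then keeping the sign bookkeeping for $\im\omega+\gamma_e$ and $\im\omega+\gamma_m$ straight so that the upper bounds remain valid in every regime (in particular when $\re\omega=0$, where the second bound is vacuous).
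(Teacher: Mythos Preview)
Your proof is correct and follows essentially the same approach as the paper: both compute the quadratic form of $\cL(\omega)$, observe that the $\curl$-terms contribute only a real quantity, take imaginary parts along a sequence realising $0\in\overline{W(\cL(\omega))}$, and read off the three inequalities by elementary sign analysis. The only cosmetic difference is in the derivation of the $(\re\omega)^{-2}$ bound, where the paper rewrites the imaginary-part identity so that $-\im\omega$ appears multiplied by a factor $\geq 1$, while you bound each summand in the limit directly; both arguments yield the same estimate.
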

\begin{proof}
Let $\omega \in W(\cL)$; then by definition $0 \in \ov{W(\cL(\omega))}$. Let $(u_n,v_n) \in H_0(\curl, \Omega) \oplus H(\curl, \Omega)$, $\norma{u_n}^2 + \norma{v_n}^2 = 1$, $n \in \N$ and consider the equation
\[
- \omega + \left(\frac{\theta_e^2}{(\omega + i \gamma_e)}u_n, u_n\right) + 2 \re (i\curl v_n, u_n) + \bigg(\frac{\theta_m^2}{(\omega + i \gamma_m)} v_n, v_n\bigg) = \eps_n
\]
with $\eps_n \in \C$, $\eps_n \to 0$ as $n \to \infty$. Upon taking the imaginary part we see that
\begin{equation}\label{imp}
- \im \omega - \bigg( \frac{(\im \omega + \gamma_e)}{|\omega + i \gamma_e|^2} \theta_e^2 u_n, u_n \bigg) - \bigg( \frac{(\im \omega + \gamma_m)}{|\omega + i \gamma_m|^2} \theta_m^2 v_n, v_n \bigg) = \im \eps_n.
\end{equation}
If $\im \omega > 0$, then $\im (\omega + \eps_n) \leq 0$; this gives a contradiction for $n \to \infty$, hence $\im \omega \leq 0$. Similarly, it cannot happen that both $\im \omega + \gamma_e < 0$ and $\im \omega + \gamma_m < 0$; hence $- \max\{ \gamma_e, \gamma_m \} \leq \im \omega \leq 0$, i.e. $-M \leq \im\omega \leq 0.$ To obtain (\ref{enc}), we observe that (\ref{imp}) may be rewritten as
\[
\begin{split}
 (-\im\omega)& \left\{1 + \frac{(\theta_e^2 u_n,u_n)}{|\omega+i\gamma_e|^2} + \frac{(\theta_m^2 v_n,v_n)}{|\omega+i\gamma_m|^2} \right\} \\
&= \frac{\gamma_e(\theta_e^2 u_n,u_n)}{(\re\omega)^2 + (\gamma_e+\im\omega)^2} + \frac{\gamma_m(\theta_m^2 v_n,v_n)}{(\re\omega)^2 + (\gamma_m+\im\omega)^2} + \im \eps_n \\
&\leq \,  \frac{\gamma_e \| \theta_e \|_\infty^2 + \gamma_m \|\theta_m\|_\infty^2}{(\re\omega)^2} + \im \eps_n
\end{split}
\]
The factor in parentheses $\left\{ \cdot \right\}$ on the left hand side exceeds $1$, so the result follows by taking the limit as $n \to \infty$.
\end{proof}

In order to make further progress we use an additional Schur complement argument on the pencil $\cL(\omega)$, which can be considered as a block operator matrix in $\cH = L^2(\Omega)^3 \oplus L^2(\Omega)^3$. In terms of the bounded quadratic pencils $\Theta_e$ and $\Theta_m$, see \eqref{Thetadef},
$\cL(\omega)$ has the form
\[
\cL(\omega) = \begin{pmatrix}
- \frac{\Theta_e(\omega)}{(i \gamma_e + \omega)} & i \curl \\
-i \curl_0 & - \frac{\Theta_m(\omega)}{(i \gamma_m + \omega)}
\end{pmatrix}.
\]
If $\omega$ is an eigenvalue of $\cL$ with eigenfunction $\binom{E}{H}$, then
\[
\begin{cases}
- \frac{\Theta_e(\omega)}{(i \gamma_e + \omega)}E + i \curl H = 0, \\
-i \curl_0 E - \frac{\Theta_m(\omega)}{(i \gamma_m + \omega)}H = 0.
\end{cases}
\]
Assume that $\Theta_m(\omega)$ is boundedly invertible. Formally, we could apply $\Theta_m(\omega)^{-1}$ from the left in the second equation and apply $\curl$; by using the first equation we obtain
\[
\curl \Theta_m(\omega)^{-1} \curl_0 E - \frac{\Theta_e(\omega)}{(i \gamma_e + \omega)(i \gamma_m + \omega)} E = 0
\]
for all $\omega \in \rho(\Theta_m)$. However, without further restrictions on $\omega$ the operator $\curl \Theta_m(\omega)^{-1} \curl_0$
 may not even be accretive.


\begin{theorem}\label{thm: s_1 form_refined}
Let $\Sigma_1$ be the set
\[ \Sigma_1 := \{ \omega\in{\mathbb C} \, | \, \re(\omega)\im(\omega+i\gamma_m/2)>0\}. \]
Then for each $\omega\in\Sigma_1$, the sesquilinear form
\[
\fs_1(\omega)[x,y] = (\Theta_m(\omega)^{-1} \curl_0 x, \curl_0 y) - \bigg( \frac{\Theta_e(\omega)}{(\omega + i \gamma_m)(\omega + i\gamma_e)} x, y\bigg)
\]
is such that $i \fs_1(\omega)$ is quasi-accretive. Similarly, defining
\[ \Sigma_2 := \{ \omega\in{\mathbb C} \, | \, \re(\omega)\im(\omega+i\gamma_m/2)<0\}, \]
if $\omega \in \Sigma_2$ then $-i \fs_1(\omega)$ is quasi-accretive.
\end{theorem}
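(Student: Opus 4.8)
The plan is to reduce the statement to the pointwise sign of $\im\Theta_m(\omega)$ together with the boundedness of the zeroth-order coefficient, and the decisive elementary fact is that, since $\theta_m$ is real-valued,
\[
\im\Theta_m(\omega)(x)=\im\big(\omega^2+i\gamma_m\omega\big)=2\,\re(\omega)\,\im\big(\omega+i\gamma_m/2\big)
\]
does not depend on $x\in\Omega$. Fix $\omega\in\Sigma_1$. First I would use this identity to see that the right-hand side is $>0$, hence $|\Theta_m(\omega)(x)|\ge\im(\omega^2+i\gamma_m\omega)>0$ uniformly in $x$; therefore $\Theta_m(\omega)^{-1}$ is a bounded multiplication operator (equivalently $\omega\notin\{-i\gamma_e,-i\gamma_m\}\cup\ov{W(\Theta_m)}$), $\fs_1(\omega)$ is a densely defined sesquilinear form, and its form domain is the $\omega$-independent space $H_0(\curl,\Omega)$ --- which is what makes ``$i\fs_1(\omega)$ is quasi-accretive'' a meaningful assertion for every $\omega\in\Sigma_1$ simultaneously.

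Next I would split $i\fs_1(\omega)[\psi,\psi]$ into its first-order and zeroth-order contributions, write each as an integral over $\Omega$, and take real parts. Using $\re(i z^{-1})=\im(z)/|z|^2$ pointwise, this yields
\[
\re\big(i\fs_1(\omega)[\psi,\psi]\big)=\int_\Omega\frac{2\,\re(\omega)\,\im(\omega+i\gamma_m/2)}{|\Theta_m(\omega)(x)|^2}\,|\curl_0\psi|^2\,dx\;-\;\re\!\left(\frac{i}{(\omega+i\gamma_m)(\omega+i\gamma_e)}\int_\Omega\Theta_e(\omega)(x)\,|\psi|^2\,dx\right).
\]
On $\Sigma_1$ the first integrand is a strictly positive multiple of $|\curl_0\psi|^2$, so the first term is $\ge 0$; for the second, $|\Theta_e(\omega)(x)|\le|\omega^2+i\gamma_e\omega|+\norma{\theta_e}_\infty^2$ uniformly in $x$ while $\omega+i\gamma_e,\ \omega+i\gamma_m\ne0$, so its modulus is at most $c(\omega)\norma{\psi}^2$ with $c(\omega):=\big(|\omega^2+i\gamma_e\omega|+\norma{\theta_e}_\infty^2\big)\big/\big(|\omega+i\gamma_m|\,|\omega+i\gamma_e|\big)$. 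Hence $\re\big(i\fs_1(\omega)[\psi,\psi]\big)\ge-c(\omega)\norma{\psi}^2$ for all $\psi\in H_0(\curl,\Omega)$, i.e.\ $i\fs_1(\omega)+c(\omega)$ is accretive. For $\omega\in\Sigma_2$ the same identity gives $\im(\omega^2+i\gamma_m\omega)<0$, so replacing $i$ by $-i$ makes the first integrand positive again and the identical estimate shows $-i\fs_1(\omega)+c(\omega)$ is accretive. All the bounds above are uniform in $x$, so the argument is insensitive to whether $\Omega$ is bounded.

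I do not expect a genuine obstacle: the heart of the matter is the observation that, $\theta_m$ being real, $\im\Theta_m(\omega)$ is constant in $x$ and factors through $2\re(\omega)\im(\omega+i\gamma_m/2)$, which is exactly what forces the splitting $\Sigma=\Sigma_1\,\dot{\cup}\,\Sigma_2$ and explains why no fixed choice of sign can make $i\fs_1$ accretive on all of $\Sigma$. The points that do need attention are bookkeeping ones: (i) verifying that the form domain is really $\omega$-independent on each $\Sigma_j$, as above; and (ii), although the statement asks only for quasi-accretivity, it is worth recording that on $\Sigma_1$ the real part of the principal part is bounded \emph{below} by a positive multiple of $\norma{\curl_0\psi}^2$ and \emph{above} by a constant multiple of it --- again because $\im\Theta_m(\omega)$ is a nonzero constant while $\re\Theta_m(\omega)$ is bounded --- so that the form norm of $i\fs_1(\omega)$ is equivalent to the $H_0(\curl,\Omega)$-norm, and $i\fs_1(\omega)$ is in fact closed and sectorial; these are the properties needed afterwards to pass to $m$-accretive, respectively $m$-dissipative, operator realisations of $\cS_1(\omega)$.
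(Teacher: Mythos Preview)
Your proof is correct and follows essentially the same approach as the paper: split $\fs_1(\omega)$ into the $\curl$--$\curl$ part $\ft(\omega)$ and the bounded zeroth-order part, observe that the latter only shifts by a constant, and determine the sign of $\im\ft(\omega)[\psi]$ from the pointwise sign of $\im\Theta_m(\omega)$, which is $x$-independent because $\theta_m$ is real. The only difference is cosmetic: the paper computes $\im\Theta_m(\omega)$ via the polar substitution $\omega=-i\gamma_m/2+re^{i\phi}$, obtaining $r^2\sin(-2\phi)$, whereas your Cartesian identity $\im\Theta_m(\omega)=2\,\re(\omega)\,\im(\omega+i\gamma_m/2)$ makes the link with the defining inequality of $\Sigma_1$ immediate and is arguably cleaner; your closing remarks on closedness and sectoriality correspond to the paper's invocation of Kato's form perturbation theorems.
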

\begin{proof}
Let $\fa(\omega)$ be the sesquilinear form
\[ \fa(\omega)(x,y) = \bigg( \frac{\Theta_e(\omega)}{(\omega + i \gamma_m)(\omega + i\gamma_e)} x, y\bigg). \]
Let $\ft(\omega)(x, y) = (\Theta_m(\omega)^{-1} \curl_0 x, \curl_0 y)$, $x,y \in H_0(\curl, \Omega)$. Since $\fa(\omega)$ is bounded for fixed $\omega \in \Sigma_1$, we see that $e^{i \phi}\fa$ is closed and quasi-accretive with domain $\cH$ for all $\phi \in[0, 2\pi)$. Assume that we have already proved that $i \ft(\omega)$ is quasi-accretive for $\omega$ lying in the set $\Sigma_1$. By \cite[Thm VI.1.27, VI.1.31]{MR1335452} the sum $i \fs_1(\omega) = i \fa(\omega) + i \ft(\omega)$ is closed and quasi-accretive on $\dom(\ft(\omega))$.

Hence, it is sufficient to show that if $\omega \in \Sigma_1$ then $i \ft(\omega)$ is quasi-accretive. Equivalently, we must show that $W(\ft(\omega)) \subset {\mathbb H}_- = \{z \in \C : \im z \leq 0 \}$ for all $\omega \in \Sigma_1$. We note that
\[
(\Theta_m(\omega)^{-1} u, u) = \frac{ \bar{\omega}^2 - i \gamma_m \bar{\omega} - (\theta_m^2 u, u)}{| \omega^2 + i \gamma_m \omega - (\theta_m^2 u, u)|^2}
\]
for every $u \in \cH$, $\norma{u} = 1$. Now
\[
\bar{\omega}^2 - i \gamma_m \bar{\omega} - (\theta_m^2 u, u) =  \bigg(\bar{\omega} - \frac{i\gamma_m}{2}\bigg)^2 + \frac{\gamma^2_m}{4} - (\theta_m^2 u, u).
\]
Suppose first that $\omega\in\Sigma_1$ and $\re(\omega)>0$. Then $\omega = -i \gamma_m/2 + r e^{i\phi}$, for some $r > 0$, $\phi \in (0, \pi/2)$. (The case where $\omega \in \Sigma_1$ with $\re(\omega)<0$ follows from the previous case and a reflection argument with respect to the point $-i \gamma_m/2$.) It follows that
\[
\bigg(\bar{\omega} - \frac{i\gamma_m}{2}\bigg)^2 + \frac{\gamma^2_m}{4} - (\theta_m^2 u, u) = r^2 e^{-2i\phi} + \frac{\gamma^2_m}{4} - (\theta_m^2 u, u)
\]
and upon taking the imaginary part, $\im(r^2 e^{-2i\phi} + \frac{\gamma^2_m}{4} - (\theta_m^2 u, u)) = \sin(-2\phi) <0$. Thus,
\begin{multline} \label{eq: final ineq}
\im \ft(\omega)[x] = \im (\Theta_m(\omega)^{-1} \curl_0 x, \curl_0 x) \\
\leq r^2 \sin(-2\phi)  \bigg( \frac{1}{| \omega^2 + i \gamma_m \omega - \theta_m^2|^2} \curl_0 x, \curl_0 x \bigg) \leq 0.
\end{multline}
So $W(\ft(\omega)) \subset {\mathbb{H}}_-$ for all $\omega \in \Sigma_1$ and $i \ft(\omega)$ is accretive for all $\omega \in \Sigma_1$. The final claim follows by noting that when $\omega \in \Sigma_2$, $\im (\ft(\omega)[x]) \geq 0$, by reversing all the inequalities in \eqref{eq: final ineq}.
\end{proof}

\begin{rem}
Theorem \ref{thm: s_1 form_refined} shows that $\fs_1(\omega)$ has a well-defined $m$-accretive representation via the first representation theorem for all $\omega \in \C \setminus (i\R \cup (-i \gamma_m/2 + \R))$. We note \emph{en passant} that the singular sets $\{-i \gamma_e, -i \gamma_m \}$ and $W(\Theta_m)$ are both contained in $\C \setminus \Sigma = (i \R \cup (-i \gamma_m/2 + \R))$.
\end{rem}

\begin{corollary} \label{cor: S_1} Let $\Sigma_1$ and $\Sigma_2$ be as in Theorem \ref{thm: s_1 form_refined}.
For every $\omega \in \Sigma_1$, there exists an operator $\cS_1(\omega)$ such that $i \cS_1(\omega)$ is $m$-accretive and
\[
i(\cS_1(\omega) x, y) = i\fs_1(\omega)[x,y]
\]
for all $x \in \dom(S_1)$, $y \in \dom(\fs_1)$, and $\dom(\cS_1)$ is a core of $\dom(\fs_1)$. Similarly for $\omega\in\Sigma_2$ there
exists an operator $\cS_1(\omega)$ such that $-i\cS_1(\omega)$ is $m$-accretive and
\[
-i(\cS_1(\omega) x, y) = -i\fs_1(\omega)[x,y].
\]
\end{corollary}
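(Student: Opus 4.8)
The plan is to apply the first representation theorem for closed sectorial forms \cite[Thm.~VI.2.1]{MR1335452} to the form $i\fs_1(\omega)$ when $\omega\in\Sigma_1$, and to the form $-i\fs_1(\omega)$ when $\omega\in\Sigma_2$; in both cases the operator $\cS_1(\omega)$ will be obtained as $\mp i$ times the operator the representation theorem associates to the relevant form.

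Fix $\omega\in\Sigma_1$. The form $i\fs_1(\omega)$ has domain $\dom(\fs_1(\omega)) = H_0(\curl,\Omega)$, which contains $C^\infty_c(\Omega)^3$ and is therefore dense in $L^2(\Omega)^3$; by Theorem \ref{thm: s_1 form_refined} it is closed. To invoke the representation theorem one needs slightly more than the quasi-accretivity stated there, namely sectoriality, and this is already implicit in the proof of that theorem. Writing $\omega = -i\gamma_m/2 + re^{i\phi}$ with $\phi\in(0,\pi/2)$ in the case $\re\omega>0$ (the case $\re\omega<0$ being symmetric), $\Theta_m(\omega)$ is the operator of multiplication by $x\mapsto r^2e^{2i\phi} + \gamma_m^2/4 - \theta_m(x)^2$, whose values lie on a bounded horizontal segment at the fixed positive height $r^2\sin(2\phi)$, hence inside a sector of half-angle strictly less than $\pi/2$ with vertex at the origin. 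Consequently $\Theta_m(\omega)^{-1}$ takes values in a sector of the same type; since $\ft(\omega)[u]=\int_\Omega |(\curl_0 u)(x)|^2\,\Theta_m(\omega)(x)^{-1}\,\d x$, the numerical range $W(\ft(\omega))$ is contained in the closed convex cone generated by those values, and combining this with the inequality $\im\ft(\omega)[x]\le 0$ of \eqref{eq: final ineq} shows that $i\ft(\omega)$ is sectorial. Adding the bounded form $i\fa(\omega)$ preserves closedness and sectoriality (\cite[Thm.~VI.1.27 and VI.1.31]{MR1335452}), so $i\fs_1(\omega)$ is densely defined, closed and sectorial.

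The representation theorem then yields a unique $m$-sectorial operator $T_\omega$ with $\dom(T_\omega)\subset\dom(\fs_1(\omega))$, with $\dom(T_\omega)$ a core for $\fs_1(\omega)$, and with $(T_\omega x, y) = i\fs_1(\omega)[x,y]$ for all $x\in\dom(T_\omega)$ and $y\in\dom(\fs_1(\omega))$. Setting $\cS_1(\omega):=-i\,T_\omega$ gives $i\cS_1(\omega)=T_\omega$ together with the stated identity. For $\omega\in\Sigma_2$ one repeats the argument with all inequalities in \eqref{eq: final ineq} reversed, so that it is $-i\fs_1(\omega)$ which is closed, densely defined and sectorial; the representation theorem then produces $T_\omega$ with $(T_\omega x, y) = -i\fs_1(\omega)[x,y]$, and one puts $\cS_1(\omega):=i\,T_\omega$, so that $-i\cS_1(\omega)=T_\omega$.

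The only delicate point, which I regard as the main obstacle, is the upgrade from quasi-accretivity (as literally stated in Theorem \ref{thm: s_1 form_refined}) to the sectoriality genuinely required by Kato's representation theorem; as explained above this needs no new estimate, only a careful reading of the description of the numerical range of $\ft(\omega)$ used in that proof. One should also keep in mind that, because $\fa(\omega)$ is bounded but not accretive in general, the operator $T_\omega$ is $m$-sectorial, i.e. quasi-$m$-accretive, rather than literally $m$-accretive; this is the sense in which the conclusion is to be read (\cf the Remark following Theorem \ref{thm: s_1 form_refined}).
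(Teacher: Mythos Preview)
Your proposal is correct and follows exactly the approach the paper intends: the corollary is stated without proof because it is meant as an immediate consequence of Theorem~\ref{thm: s_1 form_refined} together with Kato's first representation theorem \cite[Thm.~VI.2.1]{MR1335452}. Your explicit verification of sectoriality (rather than mere quasi-accretivity) is a useful clarification, and as you note it is already implicit in the proof of Theorem~\ref{thm: s_1 form_refined} through the invocation of \cite[Thm.~VI.1.27, VI.1.31]{MR1335452}, which are results about sectorial forms; your closing remark on quasi-$m$-accretivity versus literal $m$-accretivity is also a fair reading of how the conclusion should be understood.
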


\begin{rem}\label{S1rmk}
The operator $\cS_1$ defined in Corollary \ref{cor: S_1} is given, for $\omega\in \Sigma := \Sigma_1 \cup \Sigma_2$ defined in \eqref{def:Sigma}, by
\begin{equation}\label{s1def}
\cS_1(\omega) = \curl ( \Theta_m(\omega)^{-1} \curl_0 \cdot) - \frac{\Theta_e(\omega)}{(i \gamma_m + \omega)(i \gamma_e + \omega)}.
\end{equation}
From (\ref{Thetadef}), we see that $\Theta_m(it)<0$ for all sufficiently large $t\in\R$, and for all $t\geq 0$. Since $\Theta_e(it)<0$
for all $t\in\I\R_+$, $S_1(\omega)$ is self-adjoint and negative for $\omega\in\I\R_+$.
\end{rem}

\begin{prop} \label{prop: symmetry}
The following properties hold.
\begin{enumerate}[label=(\roman*)]
\item $\sigma(\cA) = - \ov{\sigma(\cA)}$.
\item $\sigma(\cL) \setminus \{-i\gamma_e, -i \gamma_m\} = - \ov{\sigma(\cL)} \setminus  \{-i\gamma_e, -i \gamma_m\}$.
\item $\sigma(\cS_1) \cap \Sigma = - \ov{\sigma(\cS_1) \cap \Sigma}$
\end{enumerate}
\end{prop}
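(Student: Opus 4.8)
The plan is to exploit a single structural symmetry: the conjugation operator $\cJ = \diag(i,-i,i,-i)J$ (where $J$ is componentwise complex conjugation) under which $\cA$ is $\cJ$-self-adjoint, together with the fact that complex conjugation $J$ intertwines $\curl$ with $-\curl$ up to a sign that can be tracked through the block structure. Concretely, I would look for a unitary (or anti-unitary) $U$ on $\cH$, built from $J$ and a diagonal of signs/phases, such that $U\, \cA\, U^{-1} = -\cA^*$ (equivalently $-\ov{\cA}$ in the $\cJ$-self-adjoint sense), and hence $\sigma(\cA) = \sigma(U\cA U^{-1}) = \sigma(-\cA^*) = -\ov{\sigma(\cA)}$. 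This gives (i). The key computational point is that the rational nonlinearities $\frac{\theta_e^2}{\omega+i\gamma_e}$ and $\frac{\theta_m^2}{\omega+i\gamma_m}$ satisfy $\ov{\,\overline{\cdot}\text{-version at }(-\ov\omega)\,} = $ (the original at $\omega$) — that is, replacing $\omega$ by $-\ov\omega$ and conjugating sends $\tfrac{\theta_e^2}{\omega+i\gamma_e}\mapsto \tfrac{\theta_e^2}{\omega+i\gamma_e}$, since $\overline{-\ov\omega+i\gamma_e}=-\omega+ (-i\gamma_e)\cdot(-1)=\ldots$; I would verify that the sign of $\gamma_e,\gamma_m$ is preserved precisely because they enter multiplied by $i$ and one takes a complex conjugate. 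Because $\gamma_e,\gamma_m>0$ are exactly the data we are \emph{not} perturbing, this is consistent.

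For (ii), I would transfer (i) through the Schur-complement identity. We already know from \cite[Thm 2.3.3(ii)]{TreB} that $\sigma(\cA)\setminus\sigma(-iD) = \sigma(\cL)$, with $\sigma(-iD)=\{-i\gamma_e,-i\gamma_m\}$, and this equality holds for point, continuous and essential spectrum as well. Since $-iD = \diag(-i\gamma_e,-i\gamma_m)$ is fixed under $\omega\mapsto -\ov\omega$ (its spectrum $\{-i\gamma_e,-i\gamma_m\}$ is invariant under $z\mapsto -\ov z$), removing it from both sides of (i) and re-applying the Schur identity yields $\sigma(\cL)\setminus\{-i\gamma_e,-i\gamma_m\} = -\ov{\sigma(\cL)}\setminus\{-i\gamma_e,-i\gamma_m\}$. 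Alternatively, and perhaps more cleanly, I would argue directly on the pencil: check that the same unitary/antiunitary $U$ satisfies $U\,\cL(\omega)\,U^{-1} = -\cL(-\ov\omega)^*$ for $\omega\in\C\setminus\{-i\gamma_e,-i\gamma_m\}$ (note this set is invariant under $\omega\mapsto-\ov\omega$), so $0\in\sigma(\cL(\omega))$ iff $0\in\sigma(\cL(-\ov\omega))$, which is exactly (ii).

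For (iii), the Schur reduction from $\cL$ to $\cS_1$ is only valid on $\Sigma$, and $\Sigma$ (defined by $\re(\omega)\im(\omega+i\gamma_m/2)\neq 0$) is itself invariant under $\omega\mapsto-\ov\omega$, since $\re(-\ov\omega)=-\re\omega$ and $\im(-\ov\omega+i\gamma_m/2)=\im(\omega+i\gamma_m/2)$ — wait, one must check this sign bookkeeping carefully: $\im(-\ov\omega) = -\im(\ov\omega)=\im\omega$, so $\im(-\ov\omega+i\gamma_m/2)=\im\omega+\gamma_m/2=\im(\omega+i\gamma_m/2)$, hence the product $\re(\omega)\im(\omega+i\gamma_m/2)$ only flips sign, so $\Sigma$ is invariant and moreover $\Sigma_1\leftrightarrow\Sigma_2$ are swapped. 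Then by Proposition \ref{sigmaLS} (the spectral equivalence of $\cL$ and $\cS_1$ inside $\Sigma$) together with (ii), we get $\sigma(\cS_1)\cap\Sigma = \sigma(\cL)\cap\Sigma = -\ov{\sigma(\cL)\cap\Sigma} = -\ov{\sigma(\cS_1)\cap\Sigma}$, since $-\ov{(\cdot)}$ maps $\Sigma$ into $\Sigma$. One could instead verify directly from the explicit formula \eqref{s1def} that the same $U$ conjugates $\cS_1(\omega)$ to (a sign times) the adjoint of $\cS_1(-\ov\omega)$; the only subtlety is that on $\Sigma_1$ the operator $i\cS_1$ is $m$-accretive while on $\Sigma_2$ it is $-i\cS_1$, and the symmetry must interchange these two regimes consistently — which it does, precisely because $U$ swaps $\Sigma_1$ and $\Sigma_2$.

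The main obstacle I anticipate is not any of these symmetry verifications individually but rather pinning down the correct intertwiner $U$ so that all signs line up simultaneously in all three items — in particular making sure that the $i$-factors in $\cJ=\diag(i,-i,i,-i)J$, the sign flip $\curl\mapsto-\curl$ under conjugation (because $\curl$ has real coefficients and an odd number of derivatives, so $J\curl J = \curl$, but the cross-product structure may introduce signs through the $\binom{0\ \ i\curl}{-i\curl_0\ \ 0}$ arrangement), and the behaviour of the rational symbols under $\omega\mapsto-\ov\omega$, are mutually compatible. Once $U$ is identified for $\cA$, items (ii) and (iii) should follow essentially formally from the already-established Schur-complement equivalences \cite[Thm 2.3.3(ii)]{TreB} and Proposition \ref{sigmaLS}, together with the invariance of the exceptional sets $\{-i\gamma_e,-i\gamma_m\}$ and $\Sigma$ under $z\mapsto-\ov z$.
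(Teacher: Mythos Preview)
Your approach is essentially the same as the paper's. The paper resolves your ``main obstacle'' by specifying the intertwiner explicitly: set $Q=\diag(-i,-i,1,1)$, so that $\cA_c:=Q^{-1}\cA Q$ satisfies $\cA_c=-\ov{\cA_c}$ (i.e.\ $J\cA_c J=-\cA_c$), which immediately gives (i); parts (ii) and (iii) are then deduced, exactly as you suggest, from the Schur-complement identities $\sigma(\cL)\setminus\sigma(-iD)=\sigma(\cA)\setminus\sigma(-iD)$ and Proposition~\ref{sigmaLS}, together with the invariance of $\{-i\gamma_e,-i\gamma_m\}$ and $\Sigma$ under $\zeta\mapsto-\ov\zeta$.
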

\begin{proof}
$(i)$ follows from the equality $\cA = Q \cA_{c} Q^{-1}$ for $Q = \diag(-i,-i,1,1)$, and $\cA_c = - \ov{\cA_c}$ $(ii)$ then follows from $(i)$ due to the equality $\sigma(\cL)\setminus \sigma(-iD) = \sigma(\cA) \setminus \sigma(-iD)$ and the fact that $\sigma(-iD)$ is invariant to the symmetry $\zeta \mapsto - \ov{\zeta}$. $(iii)$ now follows from $(ii)$ in a similar fashion.
\end{proof}

\noindent\textbf{Notation.}  Define multiplication operators in $L^2(\Omega)^3$ by
\begin{equation}\label{Vemdef}  \begin{array}{l}
V_m(\omega) = \frac{\Theta_m(\omega)}{(\omega + i \gamma_m)}, \;\;\;\; \omega\neq -i\gamma_m; \\
   V_e(\omega) = -\frac{\Theta_e(\omega)}{(\omega + i \gamma_e)(\omega + i \gamma_m)}, \;\;\;\; \omega \not\in \{-i\gamma_m,-i\gamma_e\}.
  \end{array}
\end{equation}

\begin{lemma}\label{lemma:bddcls}
Let $\omega \in \Sigma$. Then $\curl_0 \cS_1(\omega)^{-1}$ is closed and bounded in $L^2(\Omega)^3$; $\cS_1(\omega)^{-1} \curl $ and $\curl_0 \cS_1(\omega)^{-1} \curl$ are closable with bounded closure as operators in $L^2(\Omega)^3$.
\end{lemma}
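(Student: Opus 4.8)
The plan is to reduce all four claims to a single Gårding-type lower bound for the form $\fs_1(\omega)$, which I then combine with the first-representation-theorem identity for $\cS_1(\omega)$ and — for the two expressions involving $\curl$ on the right — with a duality argument against the adjoint operator $\cS_1(\omega)^*$. Throughout, $\cS_1(\omega)^{-1}$ is understood as the bounded inverse, so I work with $\omega\in\Sigma$ lying in the resolvent set of $\cS_1$. By the $\Sigma_1\leftrightarrow\Sigma_2$ symmetry of Theorem \ref{thm: s_1 form_refined} and Corollary \ref{cor: S_1} it suffices to treat $\omega\in\Sigma_1$, the case $\omega\in\Sigma_2$ being identical after interchanging $i$ and $-i$; and for $\omega\in\Sigma$ the multiplication operator $\Theta_m(\omega)$ is bounded and boundedly invertible (since $0\notin\ov{W(\Theta_m(\omega))}$), so $\ft(\omega)$ has form domain $H_0(\curl,\Omega)$ while $\fa(\omega)$ is a bounded form on $\cH$.

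First I would sharpen inequality \eqref{eq: final ineq} in the proof of Theorem \ref{thm: s_1 form_refined}: with $\omega=-i\gamma_m/2+re^{i\phi}$, $r>0$, $\phi\in(0,\pi/2)$ (the case $\re\omega<0$ reducing to this by the reflection about $-i\gamma_m/2$ used there), and bounding the multiplier $|\Theta_m(\omega)|^{-2}$ from below by $\norma{\Theta_m(\omega)}_{L^\infty}^{-2}$, the estimate \eqref{eq: final ineq} improves to $\re\bigl(i\ft(\omega)[x]\bigr)=-\im\ft(\omega)[x]\ge c\,\norma{\curl_0 x}^2$ for all $x\in H_0(\curl,\Omega)$, with a constant $c=c(\omega)>0$. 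Since $\fa(\omega)$ is bounded, $|\re(i\fa(\omega)[x])|\le c_a\norma{x}^2$, hence
\[
\re\bigl(i\fs_1(\omega)[x]\bigr)\ \ge\ c\,\norma{\curl_0 x}^2-c_a\,\norma{x}^2,\qquad x\in\dom(\fs_1(\omega))=H_0(\curl,\Omega).
\]
Moreover $\cS_1(\omega)^*$ is represented by the adjoint form $\fs_1(\omega)^*[x,y]=\ov{\fs_1(\omega)[y,x]}$, which has the same form domain and satisfies $\re\bigl(-i\,\fs_1(\omega)^*[x,x]\bigr)=\re\bigl(i\,\fs_1(\omega)[x,x]\bigr)$, so the same bound holds verbatim for the $m$-accretive operator $-i\cS_1(\omega)^*$.

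Given this, the four claims follow in turn. For $\curl_0\cS_1(\omega)^{-1}$: for $g\in L^2(\Omega)^3$ set $u:=\cS_1(\omega)^{-1}g\in\dom(\cS_1(\omega))\subset H_0(\curl,\Omega)$; the representation identity gives $\fs_1(\omega)[u,u]=(g,u)$, so the displayed estimate yields $c\,\norma{\curl_0 u}^2\le\norma{g}\,\norma{u}+c_a\norma{u}^2\le\bigl(\norma{\cS_1(\omega)^{-1}}+c_a\norma{\cS_1(\omega)^{-1}}^2\bigr)\norma{g}^2$, so $\curl_0\cS_1(\omega)^{-1}$ is bounded and everywhere defined, hence closed; the same argument applied to $-i\cS_1(\omega)^*$ shows $\curl_0(\cS_1(\omega)^*)^{-1}$ is bounded too. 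For $\cS_1(\omega)^{-1}\curl$, whose natural domain $\dom(\curl)=H(\curl,\Omega)$ is dense in $L^2(\Omega)^3$, I would argue by duality: since $\curl=(\curl_0)^*$ and $(\cS_1(\omega)^*)^{-1}=(\cS_1(\omega)^{-1})^*$ maps into $\dom(\cS_1(\omega)^*)\subset\dom(\curl_0)$, one has $(\cS_1(\omega)^{-1}\curl v,\psi)=(v,\curl_0(\cS_1(\omega)^*)^{-1}\psi)$ for $v\in H(\curl,\Omega)$, hence $\norma{\cS_1(\omega)^{-1}\curl v}\le\norma{\curl_0(\cS_1(\omega)^*)^{-1}}\,\norma{v}$; being bounded on a dense domain, this operator is closable with bounded closure. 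Finally, for $\curl_0\cS_1(\omega)^{-1}\curl$ and $v\in H(\curl,\Omega)$, set $u:=\cS_1(\omega)^{-1}\curl v$; the adjoint relation $\curl=(\curl_0)^*$ gives $\fs_1(\omega)[u,u]=(\curl v,u)=(v,\curl_0 u)$, so the displayed estimate gives $c\,\norma{\curl_0 u}^2\le\norma{v}\,\norma{\curl_0 u}+c_a\norma{u}^2$; inserting the bound $\norma{u}\le C'\norma{v}$ from the previous step and solving this quadratic inequality for $\norma{\curl_0 u}$ yields $\norma{\curl_0\cS_1(\omega)^{-1}\curl v}\le C\norma{v}$ on the dense domain, hence closability with bounded closure.

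I expect the only genuinely delicate point to be the first step: upgrading the quasi-accretivity of $i\fs_1(\omega)$ established in Theorem \ref{thm: s_1 form_refined} to the coercive bound that controls $\norma{\curl_0 x}$, rather than merely $\re(i\fs_1(\omega)[x])$. This is exactly the place where one must exploit that $\Theta_m(\omega)$ is boundedly invertible, i.e.\ that $\omega\in\Sigma$, and it is what makes $\curl_0$ (and not just $\cS_1(\omega)^{-1}$) bounded after composition with the resolvent. Testing the sesquilinear form, the duality step, and the elementary quadratic inequality are then all routine.
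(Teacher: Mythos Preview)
Your proof is correct. For the first two claims your approach essentially coincides with the paper's: the paper dispatches $\curl_0\cS_1(\omega)^{-1}$ immediately via the closed graph theorem from $\dom(\cS_1(\omega))\subset H_0(\curl,\Omega)=\dom(\curl_0)$, and treats $\cS_1(\omega)^{-1}\curl$ by the same duality $\ov{\cS_1(\omega)^{-1}\curl}=(\curl_0\cS_1(\omega)^{-*})^*$ that you use.

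For the third claim, however, you take a genuinely more direct route. The paper introduces an auxiliary bounded self-adjoint $B<0$ with $\im(V_e(\omega)+iB)<0$, so that the whole form associated with $\cS_1(\omega)+iB$ becomes coercive (both constants in the Gårding estimate are strictly positive); it then proves $\curl_0(\cS_1(\omega)+iB)^{-1}\curl$ bounded by a Lax--Milgram-type weak estimate with data $\curl g$, and finally transfers the bound to $\cS_1(\omega)^{-1}$ via the resolvent identity
\[
\cS_1(\omega)^{-1}\curl=(\cS_1(\omega)+iB)^{-1}\curl+i\,\cS_1(\omega)^{-1}B\,(\cS_1(\omega)+iB)^{-1}\curl.
\]
You instead keep the defective G\aa rding bound $\re(i\fs_1(\omega)[u])\ge c\norma{\curl_0 u}^2-c_a\norma{u}^2$ and absorb the $-c_a\norma{u}^2$ term using the bound $\norma{\cS_1(\omega)^{-1}\curl v}\le C'\norma{v}$ already obtained in the previous step, then solve the resulting quadratic inequality for $\norma{\curl_0 u}$. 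This is cleaner for the lemma at hand and avoids the auxiliary operator entirely. The paper's detour through $\cS_1(\omega)+iB$ buys a modular template that it reuses in Proposition~\ref{sigmaLS} for the compactly perturbed operator $\cS_1(\omega)+K$; but your argument would generalise there as well, since only bounded invertibility and the same G\aa rding estimate (for $\fs_1(\omega)$, not for the perturbation) are used.
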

\begin{proof}
Since $\dom(\cS_1(\omega)) \subset H_0(\curl, \Omega) = \dom(\curl_0)$ we have immediately that $\curl_0 \cS_1(\omega)^{-1}$ is closed and bounded, $\cS_1(\omega)^{-1} \curl$ is closable with bounded closure given by $\overline{\cS_1(\omega)^{-1} \curl} = (\curl_0 \cS_1(\omega)^{-*})^*$. It therefore remains to prove that $\curl_0 \cS_1(\omega)^{-1} \curl$ is closable and bounded. We will prove that $\curl_0 \cS_1(\omega)^{-1} \curl$ can be extended as a bounded operator in $L^2(\Omega)^3$, therefore implying that it is closable.\\
Let $B < 0$ be bounded and self-adjoint with the property that $\im(V_e(\omega) + i B) < 0$, where $V_e(\omega)$ is the multiplication operator defined in \eqref{Vemdef}. Then $\im(\cS_1(\omega) + i B) < 0$ for $\omega \in \Sigma_1$ as a consequence of Thm. \ref{thm: s_1 form_refined}. The Lax-Milgram theorem implies immediately that $\cS_1(\omega) + iB$ is boundedly invertible in $L^2(\Omega)^3$. However, a further inspection shows that if $u$ is the weak solution of $(\cS_1(\omega) + iB) u = \curl g$, $g \in L^2(\Omega)^3$, i.e.,
\[
\langle (\Theta_m(\omega))^{-1} \curl_0 u, \curl_0 v \rangle +  \langle (V_e(\omega)+iB) u, v \rangle = \langle g, \curl_0 v \rangle, \quad v \in (C^\infty_c(\Omega))^3,
\]
then for every $\delta < 1$ we have
\[
c_1(\omega) \norma{\curl_0 u}^2 + c_2(\omega) \norma{u}^2 \leq \frac{\norma{g}^2}{4 \delta} + \delta \norma{\curl_0 u}^2,
\]
in which
\[
\begin{split}
c_1(\omega) &= \essinf_{x \in \Omega}|\im \Theta_m(\omega, x)^{-1}| \\
&= |\re \omega| |2 \im \omega + \gamma_m | \, \essinf_{x \in \Omega}\left(\frac{1}{|\omega^2 + i \gamma_m \omega - \theta_m^2(x)|^2} \right),\\[0.3cm]
c_2(\omega) &= \inf_{u \in L^2(\Omega)^3} \frac{| \im (\langle (V_e(\omega)+iB) u, u \rangle) |}{\norma{u}^2} > 0.
\end{split}
\]
From this we deduce that $u \in H_0(\curl, \Omega)$, hence $(\cS_1(\omega) + i B)^{-1}$ maps $\curl L^2(\Omega)^3$ to $H_0(\curl, \Omega)$, or equivalently $\curl_0 (\cS_1(\omega) + i B)^{-1} \curl$ has bounded closure in $L^2(\Omega)^3$.\\
Now let $\omega \in \rho(\cS_1) \cap \Sigma_1$. Then $\cS_1(\omega)^{-1}$ is a bounded operator in $L^2(\Omega)^3$ and we have the resolvent identity
\[
\cS_1(\omega)^{-1} = (\cS_1(\omega) + i B )^{-1} + i \cS_1(\omega)^{-1} B ( \cS_1(\omega) + i B)^{-1}
\]
and then
\[
\cS_1(\omega)^{-1} \curl = (\cS_1(\omega) + i B )^{-1} \curl + i \cS_1(\omega)^{-1} B ( \cS_1(\omega) + i B)^{-1} \curl
\]
is bounded since so is the right-hand side, due to the previous discussion. Now, if $u \in L^2(\Omega)^3$ is the weak solution of
\[
\cS_1(\omega) u = \curl (\Theta_m(\omega))^{-1} \curl_0 u + V_e(\omega) u = \curl g
\]
for some $g \in L^2(\Omega)^3$, $V_e(\omega) u \in L^2(\Omega)^3$ implies $\curl \Theta_m(\omega)^{-1} \curl_0 u \in L^2(\Omega)^3$; then, we may immediately conclude that $u \in \dom(\curl \Theta_m(\omega)^{-1} \curl_0) \subset H_0(\curl, \Omega)$. In particular, $\cS_1(\omega)^{-1} \curl$ is bounded as an operator from $L^2(\Omega)^3$ to $H_0(\curl, \Omega)$, concluding the proof.
\end{proof}

\begin{prop}\label{sigmaLS}
$\sigma(\cL)\cap \Sigma = \sigma(\cS_{1})  \cap \Sigma$ and $\sigma_x(\cL) \cap \Sigma = \sigma_x(\cS_{1})  \cap \Sigma$,
where $x \in \{p, c, r, e\}$, denoting point, continuous, residual, and essential spectrum, respectively.
\end{prop}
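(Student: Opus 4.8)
The plan is to realise $\cS_1(\omega)$, for $\omega\in\Sigma$, as the first Schur complement of the block operator matrix $\cL(\omega)$ relative to its $(2,2)$-entry $-V_m(\omega)$, up to the nonvanishing scalar $\omega+i\gamma_m$, and then to transport invertibility, the point/residual/continuous spectrum and the essential spectrum through the associated Frobenius--Schur factorisation; the boundedness assertions of Lemma~\ref{lemma:bddcls} are exactly what compensates for the fact that the off-diagonal entries $\pm i\curl$, $\pm i\curl_0$ are neither bounded nor boundedly invertible, which blocks a direct appeal to \cite[Thm.~2.3.3]{TreB}. First I would record that for $\omega\in\Sigma$ the multiplication operator $V_m(\omega)=\Theta_m(\omega)/(\omega+i\gamma_m)$ is bounded and boundedly invertible, since $\im\Theta_m(\omega,x)=2\re(\omega)\,\im(\omega+i\gamma_m/2)$ is a nonzero constant independent of $x$, so $\essinf_x|\Theta_m(\omega,x)|>0$ (this is the constant $c_1(\omega)$ appearing in the proof of Lemma~\ref{lemma:bddcls}). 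A direct computation on $\dom(\cL(\omega))=H_0(\curl,\Omega)\oplus H(\curl,\Omega)$ then shows that $\cL(\omega)(E,H)=(f,g)$ is equivalent to
\[
H=-V_m(\omega)^{-1}\bigl(i\curl_0 E+g\bigr),\qquad (\omega+i\gamma_m)\,\cS_1(\omega)E=f-i\curl V_m(\omega)^{-1}g,
\]
the second relation read weakly; the key point is that when $g=0$ the first relation forces $\Theta_m(\omega)^{-1}\curl_0 E\in H(\curl,\Omega)$, i.e.\ $E\in\dom(\cS_1(\omega))$, and the second becomes the genuine operator identity $(\omega+i\gamma_m)\cS_1(\omega)E=f$.

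For $\sigma(\cL)\cap\Sigma=\sigma(\cS_1)\cap\Sigma$ I would argue both inclusions. If $\omega\in\Sigma$ and $0\in\rho(\cS_1(\omega))$, verify directly the block operator identity
\[
\cL(\omega)^{-1}=\begin{pmatrix}I&0\\-V_m(\omega)^{-1}(i\curl_0)&I\end{pmatrix}\begin{pmatrix}S_1(\omega)^{-1}&0\\0&-V_m(\omega)^{-1}\end{pmatrix}\begin{pmatrix}I&(i\curl)V_m(\omega)^{-1}\\0&I\end{pmatrix},
\]
with $S_1(\omega)=(\omega+i\gamma_m)\cS_1(\omega)$; expanding, its entries are, up to the bounded multiplier $V_m(\omega)^{-1}$ and scalars, the operators $\cS_1(\omega)^{-1}$, $\cS_1(\omega)^{-1}\curl$, $\curl_0\cS_1(\omega)^{-1}$ and $\curl_0\cS_1(\omega)^{-1}\curl$, all bounded (or with bounded closure) by Lemma~\ref{lemma:bddcls}, so $\cL(\omega)^{-1}$ is bounded and $\omega\in\rho(\cL)$. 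Conversely, if $\omega\in\Sigma\cap\rho(\cL)$, then $\cS_1(\omega)$ is injective (any $u\in\ker\cS_1(\omega)$ gives $(u,-V_m(\omega)^{-1}i\curl_0 u)\in\ker\cL(\omega)=\{0\}$) and surjective (given $h$, apply the equivalence above with $(f,g)=((\omega+i\gamma_m)h,0)$ to $\cL(\omega)^{-1}((\omega+i\gamma_m)h,0)$ to get $E\in\dom(\cS_1(\omega))$ with $\cS_1(\omega)E=h$); since $\cS_1(\omega)$ is closed by Corollary~\ref{cor: S_1}, the bounded-inverse theorem yields $0\in\rho(\cS_1(\omega))$.

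The point-spectrum equality is exactly the injectivity dichotomy just used. For the residual spectrum I would use that $0\in\sigma_r(T)$ iff $T$ is injective with $\ker T^*\neq\{0\}$, together with the adjoint relations $\cL(\omega)^*=-\diag(1,-1)\,\cL(-\bar\omega)\,\diag(1,-1)$ and $\cS_1(\omega)^*=\cS_1(-\bar\omega)$ (both following, after a routine check using that $\Theta_m(\omega)^{-1}$ is bounded, from $\overline{\Theta_e(\omega)}=\Theta_e(-\bar\omega)$, $\overline{\Theta_m(\omega)}=\Theta_m(-\bar\omega)$ and $\curl_0^*=\curl$), and with the invariance of $\Sigma$ under $\omega\mapsto-\bar\omega$ (a map that in fact swaps $\Sigma_1$ and $\Sigma_2$, compatibly with Proposition~\ref{prop: symmetry}); this reduces the residual-spectrum statement to the point-spectrum statement applied at $\omega$ and at $-\bar\omega$. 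The continuous-spectrum equality then follows from $\sigma_c=\sigma\setminus(\sigma_p\cup\sigma_r)$.

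The essential spectrum $\sigma_e=\sigma_{e2}$ needs more care, and I expect it to be the main obstacle. For $\sigma_e(\cS_1)\cap\Sigma\subseteq\sigma_e(\cL)$ I would take a normalised weakly null Weyl sequence $(u_n)\subset\dom(\cS_1(\omega))$ with $\cS_1(\omega)u_n\to0$; then $\fs_1(\omega)[u_n]=(\cS_1(\omega)u_n,u_n)\to0$, and the lower bound $|\im\fs_1(\omega)[u_n]|\ge c_1(\omega)\|\curl_0 u_n\|^2-\|\fa(\omega)\|$ coming out of the proof of Theorem~\ref{thm: s_1 form_refined} (with $c_1(\omega)>0$ on $\Sigma$) forces $(\curl_0 u_n)$ to be bounded; setting $E_n:=u_n$, $H_n:=-iV_m(\omega)^{-1}\curl_0 u_n$ one checks $(E_n,H_n)\in\dom(\cL(\omega))$, that $\|(E_n,H_n)\|$ is bounded above and below, that $H_n\rightharpoonup0$ (since $(u_n,\curl_0 u_n)$ stays in the weakly closed graph of $\curl_0$ while $u_n\rightharpoonup0$), and that $\cL(\omega)(E_n,H_n)=((\omega+i\gamma_m)\cS_1(\omega)u_n,0)\to0$, so $\omega\in\sigma_e(\cL)$. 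For the converse, note that $\omega\in\sigma_{e2}(\cL)$ means precisely that the closed operator $\cL(\omega)$ is not upper semi-Fredholm, i.e.\ $\dim\ker\cL(\omega)=\infty$ or $\ran\cL(\omega)$ is not closed. In the first case the projection $(E,H)\mapsto E$ is injective on $\ker\cL(\omega)$ and maps it into $\ker\cS_1(\omega)$ (by the $g=0$ equivalence), so $\dim\ker\cS_1(\omega)=\infty$ and $\omega\in\sigma_e(\cS_1)$. The hard case is $\ran\cL(\omega)$ non-closed: a weakly null normalised Weyl sequence $(E_n,H_n)$ for $\cL(\omega)$ need not have $E_n\in\dom(\cS_1(\omega))$ (equivalently, $\curl V_m(\omega)^{-1}g_n$ need not be square-integrable), so it cannot be transported to $\cS_1(\omega)$ by hand; this is precisely where off-diagonal dominance defeats the classical Schur-complement machinery, and I would close it with the abstract Schur-complement/semi-Fredholm stability result announced in the Introduction (an improvement of \cite[Prop.~2.10.1(c)]{TreB}), whose hypotheses are met by the boundedness statements of Lemma~\ref{lemma:bddcls} and which yields that $\cL(\omega)$ and $\cS_1(\omega)$ are of the same semi-Fredholm type throughout $\Sigma$, completing the essential-spectrum equality.
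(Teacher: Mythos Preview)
Your argument for $\sigma(\cL)\cap\Sigma=\sigma(\cS_1)\cap\Sigma$ and for the point spectrum via the Frobenius--Schur factorisation, backed by Lemma~\ref{lemma:bddcls}, is essentially the paper's. Your treatment of the residual spectrum via the adjoint symmetry $\omega\mapsto-\bar\omega$ is correct but heavier than needed: both $\cL(\omega)$ and $\cS_1(\omega)$ are $J$-self-adjoint (for suitable conjugations), so $\sigma_r(\cL(\omega))=\sigma_r(\cS_1(\omega))=\emptyset$ outright, and the continuous-spectrum equality then drops out of the full-spectrum and point-spectrum equalities. Your Weyl-sequence construction for $\sigma_e(\cS_1)\cap\Sigma\subset\sigma_e(\cL)\cap\Sigma$ is also fine; the graph-closedness argument for $H_n\rightharpoonup0$ is a valid alternative to the paper's device of writing $v_n$ through $(\cS_1(\omega)+it)^{-1}$.

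The genuine gap is the hard direction $\sigma_e(\cL)\cap\Sigma\subset\sigma_e(\cS_1)\cap\Sigma$. You invoke ``the abstract Schur-complement/semi-Fredholm stability result announced in the Introduction (an improvement of \cite[Prop.~2.10.1(c)]{TreB})'', but that improvement \emph{is} Proposition~\ref{sigmaLS}: the Introduction is advertising this very proof, not an external black box, so the appeal is circular. The paper's concrete device is to exploit $J$-self-adjointness once more: by \cite[Thm.~IX.1.6]{EE} one has $\sigma_{e1}(\cL(\omega))=\cdots=\sigma_{e4}(\cL(\omega))$ and likewise for $\cS_1(\omega)$, so it suffices to prove $\sigma_{e4}(\cL)\cap\Sigma\subset\sigma_{e4}(\cS_1)\cap\Sigma$. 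The $\sigma_{e4}$-characterisation (namely, $\omega\notin\sigma_{e4}(T)$ iff $T+K$ is boundedly invertible for some compact $K$) converts the semi-Fredholm obstruction back into an invertibility question you already know how to handle: if $\omega\in\Sigma\setminus\sigma_{e4}(\cS_1)$, pick compact $K$ with $0\in\rho(\cS_1(\omega)+K)$, set $\cK=\diag(K,0)$, and check that the same block-resolvent formula you wrote, with $\cS_1(\omega)$ replaced by $\cS_1(\omega)+K$, defines a bounded inverse for $\cL(\omega)+\cK$. The only nontrivial entry is $\curl_0(\cS_1(\omega)+K)^{-1}\curl$, whose boundedness follows by the same shift-by-$iB$ resolvent identity used in Lemma~\ref{lemma:bddcls}. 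This is the missing idea; without it the non-closed-range case you correctly flagged remains open.
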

\begin{proof}
The proof is along the lines of \cite[Prop. 2.10.1(c)]{TreB}. Without loss of generality, we consider only $\omega \in \Sigma_1$. Note that $\cL(\omega)$ is the sum of a self-adjoint operator and a $\cJ$-self-adjoint bounded operator, where $\cJ = \diag (i, -i) J$, $J$ being the standard complex conjugation; hence, it is easy to check that $\cL(\omega)$ is $\cJ$-self-adjoint for all $\omega \in \Sigma_1$ and $\dom(\cL(\omega))$ does not depend on $\omega \in \Sigma_1$ and it is given by $H_0(\curl, \Omega) \oplus H(\curl, \Omega)$. Due to Theorem \ref{thm: s_1 form_refined}, $i \cS_1(\omega)$, $\omega \in \Sigma_1$ is a well-defined $m$-accretive operator associated with the sesquilinear form $i \fs_1(\omega)$, via the first representation theorem. In particular, $\dom(\cS_1(\omega)) \subset H_0(\curl, \Omega)$ does not depend on $\omega \in \Sigma_1$ and it is a core for $H_0(\curl, \Omega)$. \\
We will first prove that $\sigma(\cS_1) \cap \Sigma_1 \subset \sigma(\cL) \cap \Sigma_1$ and that $\sigma_p(\cS_1) \cap \Sigma_1 = \sigma_p(\cL) \cap \Sigma_1$. If $f \in L^2(\Omega)^3$ and $\omega \in \rho(\cL) \cap \Sigma_1$, then the solution $u$ to the equation $\cL(\omega)(u,v)^t = (f,0)^t$, $v := - i (\omega + i \gamma_m)\Theta_m(\omega)^{-1} \curl_0 u$, is in one-to-one correspondence with the solution of $\cS_1(\omega) u = f$, therefore implying that $\omega \in \rho(\cS_1)$. This also proves that $\sigma_p(\cS_1) \cap \Sigma_1 = \sigma_p(\cL) \cap \Sigma_1$ by arguing in a similar way for $f = 0$. \\
We now prove that  $\sigma(\cS_1) \cap \Sigma_1 \supset \sigma(\cL) \cap \Sigma_1$. Assume that $\omega \in \rho(\cS_1) \cap \Sigma_1$.
Then, at least on $L^2(\Omega)^3 \oplus V_m(\omega)H(\curl, \Omega)$, after recalling \eqref{Vemdef}, we can write the equality
\begin{equation}
\label{eq: res_eq}
\cL(\omega)^{-1} =
\begin{pmatrix}
\cS_1(\omega)^{-1} & - \cS_1(\omega)^{-1} i \curl V_m(\omega)^{-1} \\
V_m(\omega)^{-1} i \curl_0 \cS_1(\omega)^{-1} & V_m(\omega)^{-1} (I + \curl_0 \cS_1(\omega)^{-1} \curl (V_m(\omega))^{-1})
\end{pmatrix}
\end{equation}
so that $\cS_1(\omega) = T(\omega) + V_e(\omega)$, $T(\omega) = \curl \Theta_m(\omega)^{-1} \curl_0$. First note that $L^2(\Omega)^3 \oplus V_m(\omega)H(\curl, \Omega)$ is dense in $L^2(\Omega)^3 \oplus L^2(\Omega)^3$, whenever $\omega \in \Sigma_1$. Hence, it suffices to prove that the right-hand side in \eqref{eq: res_eq} has bounded closure as an operator in $L^2(\Omega)^3 \oplus L^2(\Omega)^3$. This is an immediate consequence of Lemma \ref{lemma:bddcls}, concluding therefore the proof of the inclusion.\\
It remains to prove that $\sigma_e(\cL) \cap \Sigma = \sigma_e(\cS_1) \cap \Sigma$. Due to the previous part of the proof, it is enough to show that $\sigma_p(\cL) \cap \sigma_e(\cL) \cap \Sigma = \sigma_p(\cS_1) \cap \sigma_e(\cS_1)\cap \Sigma$. First, we note that $\cL(\omega)$ is $\cJ$-selfadjoint with respect to $\cJ = \diag(i, -i) J$ and $\cS_1(\omega)$ is $J$-self-adjoint, $J$ being the standard complex conjugation; therefore, \cite[Theorem IX.1.6]{EE} implies $\sigma_{e1}(\cL(\omega)) = \cdots = \sigma_{e4}(\cL(\omega))$ (and similarly for $\cS_1(\omega)$). We will first show that $\sigma_{e2}(\cS_1)) \cap \Sigma \subset \sigma_{e2}(\cL)) \cap \Sigma$. Let $\omega \in \sigma_{e2}(\cS_1)) \cap \Sigma$ and let $u_n$ be a Weyl singular sequence in $\dom(\cS_1)$ such that $\cS_1(\omega) u_n \to 0$. Then, by setting $v_n = - (\omega + i \gamma_m) (\Theta_m(\omega))^{-1} \curl_0 u_n$ and $h_n = (u_n, v_n)/ (\norma{u_n}^2 + \norma{v_n}^2)^{1/2}$ we have $\cL(\omega) h_n = \norma{h_n}^{-1}(\cS_1(\omega) u_n, 0)^t \to 0$.
Let $t \in \R$ be such that $\cS_1(\omega) + i t$ is boundedly invertible (this is possible since for $|t|$ sufficiently big $ \im (\fs_1(\omega) + it)[u] < 0$, hence by the Lax-Milgram theorem we conclude that $\cS_1(\omega) + i t$ is boundedly invertible).  Now from $\cS_1(\omega) u_n \to 0$ we deduce
\[
v_n = - (\omega + i \gamma_m) (\Theta_m(\omega))^{-1} \curl_0 (\cS_1(\omega) + it)^{-1}(\cS_1(\omega) + it) u_n \rightharpoonup 0
\]
since $\curl_0 (\cS_1(\omega) + it)^{-1}$ is a bounded operator, $\cS_1(\omega) u_n \to 0$ and $u_n \rightharpoonup 0$. Hence $h_n$ is a Weyl sequence for $\cL(\omega)$, and $\omega \in \sigma_{e2}(\cL)$.
\\
We will now show that $\sigma_{e4}(\cS_1) \cap \Sigma \supset \sigma_{e4}(\cL) \cap \Sigma$. We can assume without loss of generality that we are inside $\Sigma_1$. Now assume that $\omega \in \Sigma_1$ but $\omega \notin \sigma_{e4}(\cS_1)$. Then there exists a compact operator such that $0 \in \rho(\cS_1(\omega) + K)$. According to \eqref{eq: res_eq}, if $\cK = \diag(K,0)$, then
{\small
\begin{multline*}
(\cL(\omega) + \cK)^{-1} \\
\quad \quad = \begin{pmatrix}
(\cS_1(\omega) + K)^{-1} & - (\cS_1(\omega) + K)^{-1} i \curl V_e(\omega)^{-1} \\
V_e(\omega)^{-1} i \curl_0 (\cS_1(\omega) + K)^{-1} & V_e(\omega)^{-1} (I + \curl_0 (\cS_1(\omega) + K)^{-1} \curl (V_e(\omega))^{-1})
\end{pmatrix}
\end{multline*}
}
In order to conclude, we then just need to show that $\curl_0 (\cS_1(\omega) + K)^{-1} \curl$ is bounded. But this can be proved as in the proof of Lemma \ref{lemma:bddcls} by first showing that $\curl_0 (\cS_1(\omega) + K + iB)^{-1} \curl$ is bounded for some bounded operator $B < 0$, and then by using the resolvent identity
{\small
\[
(\cS_1(\omega) + K)^{-1} \curl = (\cS_1(\omega) + K + i B )^{-1} \curl + i(\cS_1(\omega) + K)^{-1} B ( \cS_1(\omega) + K + i B)^{-1} \curl.
\]
}
Altogether, $(\cL(\omega) + \cK)^{-1}$ is bounded as an operator in $L^2(\Omega)^3 \oplus L^2(\Omega)^3$, hence $\omega \notin \sigma_{e4}(\cL) \cap \Sigma_1$.
\end{proof}

According to Proposition \ref{sigmaLS}, $\sigma(\cL)\cap \Sigma$ can be enclosed in $W(\cS_1) \cap \Sigma$.

\begin{theorem} \label{thm:refnumran}
$\sigma(\cL) \cap \Sigma \subset W(\cS_1) \cap \Sigma$. Moreover, the following explicit enclosure holds:
\[
\sigma(\cL) \setminus \overline{W(\Theta_m)}\subset \Gamma \setminus \overline{W(\Theta_m)}
\]
\begin{multline*}
\Gamma := \{\omega \in \C : \re \omega = 0, \im \omega \in (- \gamma_e, 0) \setminus \{\gamma_m\} \} \\
\cup \{\omega \in \C : \re \omega \neq 0, \im \omega \geq - (\gamma_e + \gamma_m)/2, \textup{\eqref{enc} holds}\}
\end{multline*}
\end{theorem}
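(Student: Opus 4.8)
My plan is to establish the inclusion $\sigma(\cL)\cap\Sigma\subset W(\cS_1)\cap\Sigma$ first, and then the explicit enclosure, which I would handle separately on $\{\re\omega\neq0\}$ and on $\{\re\omega=0\}$. For the inclusion, observe that $\sigma(\cL)\cap\Sigma=\sigma(\cS_1)\cap\Sigma$ by Proposition~\ref{sigmaLS}, so it suffices to show $\sigma(\cS_1)\cap\Sigma\subset W(\cS_1)$. Fix $\omega\in\Sigma$; by Corollary~\ref{cor: S_1} one of $i\cS_1(\omega)$, $-i\cS_1(\omega)$ is $m$-accretive, so its numerical range is convex and contained in the closed right half-plane, whose complement is connected and lies in the resolvent set. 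Hence the spectrum of $\pm i\cS_1(\omega)$ is contained in the closure of its numerical range, so $0\in\sigma(\cS_1(\omega))$ forces $0\in\overline{W(\cS_1(\omega))}$, i.e.\ $\omega\in W(\cS_1)$ — here I use that $\dom(\cS_1(\omega))$ is a core for $\fs_1(\omega)$, so that $\overline{W(\cS_1(\omega))}=\overline{W(\fs_1(\omega))}$, the set appearing in Definition~\ref{def:W}.

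On $\{\re\omega\neq0\}$ I would argue as follows. As observed in the proof of Proposition~\ref{sigmaLS}, $\cL(\omega)$ is the sum of the self-adjoint operator $A$ and a bounded, $\cJ$-self-adjoint multiplication operator, hence is itself $\cJ$-self-adjoint with $\cJ=\diag(i,-i)J$; $\cJ$-self-adjoint operators have empty residual spectrum, so $\sigma(\cL)=\sigma_{\rm app}(\cL)\subset W(\cL)$ by the remark following Definition~\ref{def:W}, and Proposition~\ref{prop:numrenc} then gives that \eqref{enc} holds on all of $\sigma(\cL)$, in particular $-M\le\im\omega\le0$ there. It remains to improve this to $\im\omega\ge-(\gamma_e+\gamma_m)/2$ when $\re\omega\neq0$. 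If moreover $\im\omega\neq-\gamma_m/2$, then $\omega\in\Sigma$, and by the first part there are unit vectors $u_n\in H_0(\curl,\Omega)$ with $\fs_1(\omega)[u_n]\to0$, whence $\im\fs_1(\omega)[u_n]\to0$. Substituting into $\im\fs_1(\omega)[u_n]$ the pointwise formulas for the imaginary parts of the multiplication operators $\Theta_m(\omega)^{-1}$ and $\Theta_e(\omega)/\bigl((\omega+i\gamma_e)(\omega+i\gamma_m)\bigr)$ — obtained exactly as in the proofs of Theorem~\ref{thm: s_1 form_refined} and Proposition~\ref{prop:numrenc} — and dividing by $\re\omega\neq0$, one is led to
\[
(2\im\omega+\gamma_m)\,\alpha_n+\frac{2\im\omega+\gamma_e+\gamma_m}{|(\omega+i\gamma_e)(\omega+i\gamma_m)|^2}\,\beta_n\ \longrightarrow\ \frac{\gamma_m}{|\omega+i\gamma_m|^2}>0,
\]
with $\alpha_n=\int_\Omega|\omega^2+i\gamma_m\omega-\theta_m^2|^{-2}\,|\curl_0 u_n|^2\,dx\ge0$ and $\beta_n=\int_\Omega\theta_e^2\,|u_n|^2\,dx\ge0$. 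Since the limit is strictly positive while, if $\im\omega\le-(\gamma_e+\gamma_m)/2$, both coefficients on the left are $\le0$ (using $\gamma_e\ge0$), this is impossible, so $\im\omega\ge-(\gamma_e+\gamma_m)/2$. Points with $\re\omega\neq0$ and $\im\omega=-\gamma_m/2$ trivially satisfy $\im\omega\ge-(\gamma_e+\gamma_m)/2$ and \eqref{enc}, so they too lie in $\Gamma$.

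On $\{\re\omega=0\}$ the set $\Sigma$, hence Corollary~\ref{cor: S_1}, is unavailable, and I would argue with $\cL(it)$ directly. For $\omega=it$ with $t$ real one has $\cL(it)=A-i\,\diag\bigl(t+\tfrac{\theta_e^2}{t+\gamma_e},\,t+\tfrac{\theta_m^2}{t+\gamma_m}\bigr)$, so $-i\,\cL(it)=-iA-\diag\bigl(t+\tfrac{\theta_e^2}{t+\gamma_e},\,t+\tfrac{\theta_m^2}{t+\gamma_m}\bigr)$ with $-iA$ skew-adjoint, and $\re\langle -i\,\cL(it)\psi,\psi\rangle=-\bigl\langle\diag\bigl(t+\tfrac{\theta_e^2}{t+\gamma_e},\,t+\tfrac{\theta_m^2}{t+\gamma_m}\bigr)\psi,\psi\bigr\rangle$. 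For $t>0$ both entries are $\ge t$, so $-i\,\cL(it)+t$ is $m$-dissipative and $it\notin\sigma(\cL)$ — consistently with the self-adjointness and negativity of $\cS_1(it)$ in Remark~\ref{S1rmk} — and the same computation covers $\omega=0$. For the rest of the axis, outside $\overline{W(\Theta_m)}$ the real coefficient $\Theta_m(it)$ keeps a constant sign bounded away from $0$, so $\cS_1(it)$ is a genuine self-adjoint operator and a Schur complement of $\cL(it)$ with respect to a boundedly invertible diagonal block is available; tracking the (elementary, since $t$ is real) signs of $\Theta_e(it)$ and of $(t+\gamma_e)(t+\gamma_m)$, one finds that this Schur complement can fail to be boundedly invertible only where the $e$-channel coefficient $\Theta_e(it)/\bigl((t+\gamma_e)(t+\gamma_m)\bigr)$ has sign opposite to $\Theta_m(it)^{-1}$, which confines $it$ to the stated segment; the poles $-i\gamma_e$, $-i\gamma_m$ are excluded from $\sigma(\cL)$ by the definition of the pencil, and the symmetry $\sigma(\cL)=-\overline{\sigma(\cL)}$ of Proposition~\ref{prop: symmetry}(ii) shortens the remaining casework. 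I expect this last, imaginary-axis, step to be the main obstacle: it lies outside the range of the $\cS_1$-machinery, the three points $0$, $-i\gamma_e$, $-i\gamma_m$ need individual care, and one must follow closely how the multiplication pencil $\Theta_m$ governs the bounded invertibility of $\cL(it)$. By contrast, the region $\re\omega\neq0$ reduces cleanly to the numerical-range enclosures of Propositions~\ref{prop: num range 1}--\ref{prop:numrenc} and to the imaginary-part identity above.
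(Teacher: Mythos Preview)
Your argument for $\re\omega\neq 0$ is essentially the paper's: take the imaginary part of $\fs_1(\omega)[u_n]\to 0$, divide through by $\re\omega$, and read off the sign obstruction to $\im\omega\le -(\gamma_e+\gamma_m)/2$; the bound \eqref{enc} is inherited from Proposition~\ref{prop:numrenc}, exactly as you say.

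Where you diverge is on the imaginary axis, and the paper's route is cleaner than the direct-$\cL(it)$ plan you sketch. The key observation you are missing is that the Schur complement itself extends there: for $t\in\R\setminus(-\gamma_m,0)$ the multiplier $\Theta_m(it)=-t^2-\gamma_m t-\theta_m^2$ is real and strictly negative, so $\cS_1(it)$ is a genuine self-adjoint semibounded operator (cf.\ Remark~\ref{S1rmk}), and the proof of Proposition~\ref{sigmaLS} goes through verbatim to give $\sigma(\cL)\setminus\overline{W(\Theta_m)}=\sigma(\cS_1)\setminus\overline{W(\Theta_m)}$. At that point the paper does \emph{not} change tools: alongside your imaginary-part identity it records the \emph{real}-part identity coming from $\re\fs_1(\omega)[u_n]\to 0$, and for $x=\re\omega=0$ that equation alone forces the contradiction when $y<-\gamma_e$ or $y>0$. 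This is a single sign check, uniform over $\omega\notin\overline{W(\Theta_m)}$, and it bypasses the case-by-case invertibility analysis of $\cL(it)$ and the ``individual care'' for $0,-i\gamma_e,-i\gamma_m$ that you anticipate. Your approach can be completed, but the paper's unified use of the two real/imaginary identities from $\fs_1$ is both shorter and conceptually tidier.

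One minor slip: your claim that ``the same computation covers $\omega=0$'' is not right---your dissipativity bound at $t=0$ is not strict unless $\theta_e,\theta_m$ are bounded below, and in fact $0$ can lie in $\sigma_e(\cL)$ (via $\Theta_e(0)=-\theta_e^2$ and the pencil $G$). This does no harm to the theorem, since $0$ sits on $\partial\overline{W(\Theta_m)}$ and is therefore outside the scope of the enclosure statement anyway.
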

\begin{proof}
Note that due to Remark \ref{S1rmk}, the operator $\cS_1(it)$, $t \in \R \setminus (-\gamma_m, 0)$ is symmetric and since for fixed $t$ it is the sum of a semibounded self-adjoint operator and a bounded self-adjoint operator, $\cS_1(it)$ is self-adjoint and semibounded. The proof of Proposition \ref{sigmaLS} therefore extends to $\omega \in i\R \setminus (-i \gamma_m, 0)$, giving $\sigma(\cL) \setminus \overline{W(\Theta_m)} = \sigma(\cS_1) \setminus \overline{W(\Theta_m)}$ (note that $W(\Theta_m) \subset (- i \gamma_m, 0) \cup (-i \gamma_m/2 - s, -i \gamma_m/2 + s)$ for some $s > 0$ depending on $\theta_m$).\\
Let then $\omega \in \sigma_{\rm app}(\cS_1) \setminus \overline{W(\Theta_m)}$. There exists $u_n \in \dom(\cS_1)$, $\norma{u_n} = 1$, $n \in \N$, such that $\cS_1(\omega) u_n \to 0$. In particular,
\begin{equation}
\label{NRS1}
\begin{cases}
&\im \langle \Theta_m(\omega)^{-1} \curl_0 u_n, \curl_0 u_n \rangle = \im \left\langle \frac{\Theta_e(\omega)}{(\omega + i\gamma_e)(\omega + i \gamma_m)} u_n, u_n \right\rangle + \eps_n \\
&\re \langle \Theta_m(\omega)^{-1} \curl_0 u_n, \curl_0 u_n \rangle = \re \left\langle \frac{\Theta_e(\omega)}{(\omega + i\gamma_e)(\omega + i \gamma_m)} u_n, u_n \right\rangle + \eps_n
\end{cases}
\end{equation}
with $\eps_n \to 0$ as $n \to \infty$. Let $\omega = x + i y$. Then \eqref{NRS1} can be written explicitly as
\begin{equation}
\label{NRS1_2}
\begin{cases}
&\begin{aligned}
&\left\langle \frac{x (2 y + \gamma_m)}{|\Theta_m(\omega)|^2} \curl_0 u_n, \curl_0 u_n \right\rangle = \frac{x \gamma_m}{x^2 + (y + \gamma_m)^2} \\
&\hspace{2cm}- \frac{x(2y + \gamma_e + \gamma_m)}{(x^2 + (y + \gamma_m)^2)(x^2 + (y + \gamma_e)^2)} \langle \theta_e^2 u_n, u_n \rangle + \eps_n
\end{aligned}\\[1cm]
&\begin{aligned}
&\left\langle \frac{x^2 - y^2 - \gamma_m y - \theta_m^2}{|\Theta_m(\omega)|^2} \curl_0 u_n, \curl_0 u_n \right\rangle = \frac{x^2 + y^2}{x^2 + (y + \gamma_m)^2} \\
&\hspace{2cm}- \frac{(x^2 - y^2 - y(\gamma_e + \gamma_m) - \gamma_e \gamma_m)}{(x^2 + (y + \gamma_m)^2)(x^2 + (y + \gamma_e)^2)}\langle \theta_e^2 u_n, u_n \rangle + \eps_n
\end{aligned}
\end{cases}
\end{equation}
If $x = 0$, $y < - \gamma_e$ then the second equation reads
\begin{multline*}
\left\langle \frac{- y^2 - \gamma_m y - \theta_m^2}{|\Theta_m(\omega)|^2} \curl_0 u_n, \curl_0 u_n \right\rangle \\
= \frac{y^2}{(y + \gamma_m)^2} - \frac{(- (y + \gamma_e)(y + \gamma_m))}{((y + \gamma_m)^2)(y + \gamma_e)^2)}\langle \theta_e^2 u_n, u_n \rangle + \eps_n
\end{multline*}
and since $y < - \gamma_e < - \gamma_m$, the left-hand side is negative while the right-hand side is strictly positive for sufficiently big $n$. Similarly, if $y > 0$, the left-hand side is negative while the right-hand side is strictly positive for sufficiently big $n$, a contradiction. Therefore, if $x = 0$, $y \in (- \gamma_e, 0)$. \\
Now, assume $x \neq 0$. We can then divide by $x$ in the first equation of \eqref{NRS1_2}. Then we see immediately that if $y \leq - (\gamma_e + \gamma_m)/2 < - \gamma_m/2$ then the left-hand side is negative while the right-hand side is strictly positive, a contradiction. Therefore, for $x \neq 0$, $y > - (\gamma_e + \gamma_m)/2$.
\end{proof}

\section{Decomposition of the essential spectrum}
\label{sec:ess_spec}

In this section we will adapt the strategy of proof recently used for the analogous decomposition of the essential spectrum for the time-harmonic Maxwell system with non-trivial conductivity in the recent article \cite{BFMT}. For the convenience of the reader we will state and prove all the required results.

Without loss of generality we assume that $\Omega$ is unbounded, the bounded case being substantially simpler, see Remark \ref{rem:bdd_dec} below. Let $\Omega_R = \Omega \cap B(0,R)$, for $R>0$. For any $\delta>0$ we assume that
the functions $\theta_e$ and $\theta_m$ admit a decomposition
\begin{equation} \label{eq:coeffs-infty}
\theta_e(x) = \theta^c_{e}(x) + \theta^\delta_{e}(x) + \theta_e^0, \quad \theta_m(x) = \theta^c_{m}(x) + \theta^\delta_{m}(x) + \theta_m^0
\end{equation}
for all $x \in \Omega$, where $\theta^c_{e}, \theta^c_{m}, $ have compact support in $\Omega_R$ (for some sufficiently large $R$ depending on $\delta$), $\theta^\delta_{e}$, $\theta^\delta_{m}$, are bounded multiplication operators with norm less than $\delta$, and $\theta_e^0, \theta_m^0$ are real constants, representing the asymptotic values of $\theta_e$ and $\theta_m$. In particular,
\[
\lim_{R \to \infty} \sup_{|x| > R} |\theta_*(x) - \theta_*^0| = 0, \quad * = e,m.
\]
Corresponding to this decomposition of $\theta_e$ and $\theta_m$ we also introduce `limits at infinity' of the functions $\Theta_e$ and
$\Theta_m$ in \eqref{Thetadef}, namely
\begin{equation}\label{Thetainfdef}
\Theta_{e,\infty}(\omega) = \omega^2 + \omega i\gamma_e -(\theta_e^0)^2, \;\;\;
\Theta_{m,\infty}(\omega) = \omega^2 + \omega i\gamma_m -(\theta_m^0)^2,
\end{equation}
and of the functions $V_e$ and $V_m$ appearing in \eqref{Vemdef}, namely
\begin{equation}
\label{Veminfty}
V_{m,\infty}(\omega) = \frac{\Theta_{m,\infty}(\omega)}{(\omega+i\gamma_m)}, \;\;\;
 V_{e,\infty}(\omega) = \frac{\Theta_{e,\infty}(\omega)}{(\omega+i\gamma_e)(\omega+i\gamma_m)}.
\end{equation}

We use the classical Helmholtz decomposition $L^2(\Omega)^3\!=\!\nabla \dot H^1_0(\Omega) \oplus H(\Div 0,\Omega)$, see e.g.\ \cite[Lemma 11]{MR3942228}, and we denote by $P_{\ker(\Div)}$ the associated orthogonal projection onto $H(\Div 0,\Omega)$. The following result is stated in \cite[Proposition 5.1]{BFMT} and in a less general setting in \cite[Lemma 23]{MR3942228}.

\begin{prop}
\label{thm: compactness}
Let $m:\Omega\to \C^{3\times 3}$ be a locally bounded function such that
\begin{equation}
\label{eq:limit-gen}
\lim_{R\to\infty}\sup_{\|x\|>R} \|m(x)\|=0.
\end{equation}
Then $mP_{\ker(\Div)}$ is compact from $(H(\curl, \Omega), \norma{\cdot}_{H(\curl, \Omega)})$ to  $( L^2(\Omega)^3, \norma{\cdot}_{ L^2(\Omega)^3})$.
\end{prop}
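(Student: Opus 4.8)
The plan is to prove that $mP_{\ker(\Div)}$ maps bounded subsets of $(H(\curl,\Omega),\norma{\cdot}_{H(\curl,\Omega)})$ to relatively compact subsets of $L^2(\Omega)^3$, by splitting $\Omega$ into a large ball, on which a local Maxwell-type compactness applies, and its complement, on which $m$ is uniformly small. First I would take a bounded sequence $(u_n)$ in $H(\curl,\Omega)$ and set $w_n:=P_{\ker(\Div)}u_n$. Since $P_{\ker(\Div)}$ is an $L^2$-orthogonal projection, $\norma{w_n}\le\norma{u_n}$; and since $u_n-w_n\in\nabla\dot{H}^1_0(\Omega)$ we have $\curl w_n=\curl u_n$, so $(w_n)$ is bounded in the Hilbert space $H(\curl,\Omega)\cap H(\Div 0,\Omega)$. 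Passing to a subsequence, $w_n\rightharpoonup w$ in that space; since $w$ and each $w_n-w$ are again divergence-free with $\curl$ bounded in $L^2$, it is enough to prove the implication: if $\widetilde w_n\rightharpoonup 0$ in $H(\curl,\Omega)\cap H(\Div 0,\Omega)$ then $m\widetilde w_n\to 0$ in $L^2(\Omega)^3$ (applying this to $\widetilde w_n:=w_n-w$ gives $mw_n\to mw$, hence compactness).

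Next I would use the decay hypothesis \eqref{eq:limit-gen}. Given $\epsilon>0$, pick $R>0$ with $\sup_{\norma{x}>R}\norma{m(x)}\le\epsilon$; then $\norma{m\widetilde w_n}_{L^2(\Omega\setminus B(0,R))}\le\epsilon\,\norma{\widetilde w_n}\le C\epsilon$ uniformly in $n$. Thus everything reduces to showing $\norma{m\widetilde w_n}_{L^2(\Omega_R)}\to 0$, and, $m$ being locally bounded, to showing $\widetilde w_n\to 0$ strongly in $L^2(\Omega_R)^3$.

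This local compactness is the core step, and the one I expect to be the \emph{main obstacle}. I would fix $\chi\in C^\infty_c(B(0,2R))$ with $\chi\equiv 1$ on $B(0,R)$ and consider $\chi\widetilde w_n$: it is supported in the bounded Lipschitz domain $\Omega_{2R}$, vanishes near the spherical part $\Omega\cap\p B(0,2R)$ of $\p\Omega_{2R}$, and satisfies $\curl(\chi\widetilde w_n)=\chi\curl\widetilde w_n+\nabla\chi\times\widetilde w_n$ and $\Div(\chi\widetilde w_n)=\nabla\chi\cdot\widetilde w_n$ (using $\Div\widetilde w_n=0$), both bounded in $L^2(\Omega_{2R})^3$. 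On the remaining boundary piece $\p\Omega\cap B(0,2R)$ the field $\chi\widetilde w_n$ carries the trace behaviour forced by the Helmholtz decomposition $L^2(\Omega)^3=\nabla\dot{H}^1_0(\Omega)\oplus H(\Div 0,\Omega)$. Identifying precisely that boundary condition and checking that it is one for which the Maxwell compactness property of bounded Lipschitz domains (Weck's selection theorem) applies is the delicate part; granting it, $(\chi\widetilde w_n)$ has an $L^2(\Omega_{2R})^3$-convergent subsequence, whose limit must be $0$ because $\chi\widetilde w_n\rightharpoonup 0$, so $\widetilde w_n=\chi\widetilde w_n\to 0$ in $L^2(\Omega_R)^3$, and hence $\norma{m\widetilde w_n}_{L^2(\Omega_R)}\le\norma{m}_{L^\infty(\Omega_R)}\norma{\widetilde w_n}_{L^2(\Omega_R)}\to 0$.

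Putting the two estimates together, $\limsup_n\norma{m\widetilde w_n}_{L^2(\Omega)}\le 0+C\epsilon$, and since $\epsilon>0$ was arbitrary, $m\widetilde w_n\to 0$ in $L^2(\Omega)^3$. By the reduction in the first paragraph, $mP_{\ker(\Div)}$ is compact. When $\Omega$ is bounded the same argument applies with the tail term simply absent, which is why the bounded case is easier.
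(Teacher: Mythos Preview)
Your strategy is exactly the paper's: isolate a bounded piece with a smooth cutoff, apply a Maxwell-type compact embedding there, and absorb the tail using the decay of $m$. The paper phrases the argument at the operator level---decompose $m=m_c^\delta+m_\delta$ with $\|m_\delta\|<\delta$ and $m_c^\delta$ compactly supported, so that $m P_{\ker(\Div)}$ is an operator-norm limit of the compact operators $m_c^\delta P_{\ker(\Div)}$---but the analytic core is the same cutoff computation you write down.

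The step you flag as the main obstacle is indeed the crux, and it is not something one can simply ``grant''. The paper invokes the compact embedding of $H_0(\curl,\Omega_R)\cap H(\Div,\Omega_R)$ into $L^2(\Omega_R)^3$ (Weber \cite{MR561375}), i.e.\ it uses the \emph{tangential} boundary condition on all of $\partial\Omega_R$. What makes this legitimate is the identity $\nu\times P_{\ker(\Div)}u=\nu\times u$ on $\partial\Omega$: since $u-P_{\ker(\Div)}u=\nabla\phi$ with $\phi\in\dot H^1_0(\Omega)$, the tangential trace of $\nabla\phi$ is the surface gradient of $\phi|_{\partial\Omega}=0$. Hence if $u\in H_0(\curl,\Omega)$ then $P_{\ker(\Div)}u\in H_0(\curl,\Omega)\cap H(\Div 0,\Omega)$, and a radial cutoff places $\chi P_{\ker(\Div)}u$ in $H_0(\curl,\Omega_R)\cap H(\Div,\Omega_R)$, where the selection theorem applies. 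For $u\in H(\curl,\Omega)$ with no boundary condition the step genuinely fails: take $u_n=\nabla h_n$ with $h_n$ harmonic and $\|\nabla h_n\|_{L^2(\Omega)}=1$; then $P_{\ker(\Div)}u_n=u_n$, $\curl u_n=0$, yet such sequences can concentrate on $\partial\Omega$ and admit no $L^2_{\rm loc}$-convergent subsequence. In the paper the proposition is only ever applied with the tangential condition present (e.g.\ as ``$\curl_0$-compactness'' in Theorem~\ref{sigma-ess}), so the argument goes through in those applications; your instinct that the Helmholtz decomposition must supply the boundary condition is exactly right, and the identity above is the concrete way to cash it out.
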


\begin{proof}
Given $\delta>0$, there exist a bounded operator $m_\delta$ (which we identify with the corresponding multiplication operator in $L^2(\Omega)^3$) with $\|m_\delta\|<\delta$, and a function $m^\delta_c$ which is compactly supported in $\Omega_R := \Omega\cap B(0,R)$ for large $R>0$, such that $m = m^\delta_c + m_\delta$. We claim that $m^\delta_cP_{\ker(\Div)}$ is compact for every $\delta > 0$. Note that $\norma{m P_{\ker(\Div)} \!-\! m^\delta_c P_{\ker(\Div)}}_{\cB(H(\curl,\Omega),  L^2(\Omega)^3)} \leq  \delta$ vanishes as $\delta \to 0$; therefore $mP_{\ker(\Div)}$ is compact as limit of the compact operators $m^\delta_c P_{\ker(\Div)}$.

Define $\chi_R$ to be a $C^\infty$ cut-off function, $\chi_R = 1$ on $ \supp(m_c)\subset \Omega_R$ and $\chi = 0$ in $\R^3 \setminus \overline{\Omega_R}$. There exists $C_R>0$ such that, for $u\in H(\curl,\Omega)$,
$$
\|(\chi_R P_{\ker(\Div)} u)|_{\Omega_R}\|_{H(\curl, \Omega_R) \cap H(\Div, \Omega_R)}
\leq C_R\|u\|_{H(\curl, \Omega)},
$$
where we have used that $\Div (\chi_R P_{\ker(\Div)}u)=\nabla \chi_R\cdot P_{\ker(\Div)}u$ and the identity
$\curl (\chi_R P_{\ker(\Div)}u)=\nabla \chi_R\times P_{\ker(\Div)}u+\chi_R \curl u$, which holds since $\curl P_{\ker(\Div)}u=\curl u$.
Finally, $m_c P_{\ker(\Div)}$ is seen to be compact by rewriting it as follows
$$m_c P_{\ker(\Div)} u = m_c \iota(\chi_R P_{\ker(\Div)} u)|_{\Omega_R};$$
$\iota$ is the compact embedding of ${H_0(\curl, \Omega_R)} \cap H(\Div, \Omega_R)$ in $L^2(\Omega_R)^3\!$, see~\cite{MR561375}.
\end{proof}

\begin{rem} \label{rem:compactness_bdd}
If $\Omega$ is bounded, the claim of Prop \ref{thm: compactness} holds true without assuming \eqref{eq:limit-gen}. This is a direct consequence of the compact embedding of $H_0(\curl, \Omega) \cap H(\Div0, \Omega)$ into $L^2(\Omega)^3$, whenever $\Omega$ is bounded.
\end{rem}

\begin{definition}
\label{Smdef}
For $\omega\in \Sigma = \Sigma_1 \cup \Sigma_2$,
we define rational pencils of closed operators acting in the Hilbert space 
$H(\Div 0,\Omega)$ equipped with the $ L^2(\Omega)^3$-norm by
\begin{align*}
&\begin{array}{l}
\cC_m(\omega) \!:=\! \curl (\Theta_m(\omega))^{-1} \curl_{0}, \;\;\;\;\;\;
\cS_m(\omega) \!:=\!  \cC_m(\omega) - V_{e,\infty}(\omega),  \\[2mm]
\dom(\cC_m(\omega)) = \dom(\cS_m(\omega)) \\
 \hspace{2cm} \!:=\! \{u \in H_0(\curl,\Omega){\cap H(\Div 0,\Omega)} \; :\;
(\Theta_m(\omega))^{-1}\curl u \!\in\! H(\curl,\Omega) \},
\end{array}
\intertext{
and}
&\begin{array}{l}
\!\cC_{\infty}(\omega) \!:=\!  (\Theta_{m,\infty}(\omega))^{-1} \curl \curl_{0},\;\;\;\;\;\;
\!\cS_{\infty}(\omega) \!:=\!  \cC_{\infty}(\omega) -  V_{e,\infty}(\omega),  \\[2mm]
\!\dom(\cC_{\infty}(\omega)) = \!\dom(\cS_{\infty}(\omega)) \!:=\! \{u \in H_0(\curl,\Omega) {\cap H(\Div 0,\Omega)} :
\curl u \!\in\! H(\curl,\Omega) \}
\end{array}
\end{align*}
where $V_{e,\infty}(\omega)$ is defined in \eqref{Veminfty}.
\end{definition}

\textbf{Notation.} For $\omega \in \C \setminus \{-i\gamma_e, -i \gamma_m\}$ define the function
\begin{equation} \label{def:f}
f(\omega) = \frac{\Theta_{m,\infty}(\omega) \Theta_{e,\infty}(\omega)}{(\omega + i \gamma_e)(\omega + i \gamma_m)}
\end{equation}

If $\theta_m$ is not differentiable, it may happen that $\dom(\cS_m)$ is $\omega$-dependent and even that
$\dom(\cS_m(\omega)) \cap \dom(\cS_{\infty}(\omega)) = \{ 0 \}$ for suitably chosen $\omega \in \Sigma$. In spite of this, the following result holds (the version for
the non-self-adjoint time-harmonic Maxwell system was proved in \cite[Proposition 5.4]{BFMT}).

\begin{prop}
\label{thm: difference res}
If $\theta_e$, $\theta_m$ satisfy \eqref{eq:coeffs-infty} and $\cS_m$, $\cS_{\infty}$ are as in Definition {\rm \ref{Smdef}},  then $\sigma_{ek}(\cS_m) \!=\! \sigma_{ek}(\cS_{\infty})$ for $k=1,2,3,4$, and hence	
\begin{align*}
   \sigma_{ek}(\cS_m) \cap \Sigma \!=\! \bigg\{ \omega \in \Sigma :\, f(\omega) = t, \:\, t \in \sigma_{ek}(\curl \curl_0|_{H(\Div 0, \Omega)}) \bigg\}.
\end{align*}
where $f$ is the function defined in \eqref{def:f}.
\end{prop}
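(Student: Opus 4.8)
The plan is to adapt the strategy of \cite[Proposition 5.4]{BFMT} and compare $\cS_m$ and $\cS_\infty$ \emph{through their resolvents} rather than directly, since (as noted above) for non-smooth $\theta_m$ the operator domains $\dom(\cS_m(\omega))$ and $\dom(\cS_\infty(\omega))$ may intersect only in $\{0\}$. As usual it is enough to treat $\omega\in\Sigma_1$, the case $\omega\in\Sigma_2$ being entirely analogous (reversing the roles of accretive and dissipative in Theorem~\ref{thm: s_1 form_refined} and Corollary~\ref{cor: S_1}); fix such an $\omega$. The first, crucial, observation is that, although the operator domains differ, the two sesquilinear forms representing $\cS_m(\omega)$ and $\cS_\infty(\omega)$ share the common form domain $\fD:=H_0(\curl,\Omega)\cap H(\Div 0,\Omega)$: both are of the shape $(u,w)\mapsto(\Phi\,\curl_0 u,\curl_0 w)-(V_{e,\infty}(\omega)u,w)$ on $\fD$, with $\Phi=\Theta_m(\omega)^{-1}$ for $\cS_m(\omega)$ and $\Phi=\Theta_{m,\infty}(\omega)^{-1}$ (a nonzero scalar) for $\cS_\infty(\omega)$, so that their difference is the bounded form
\[
\bigl(\fs_m(\omega)-\fs_\infty(\omega)\bigr)[u,w]=\bigl(q_m(\omega)\curl_0 u,\curl_0 w\bigr),\qquad q_m(\omega):=\Theta_m(\omega)^{-1}\bigl(\theta_m^2-(\theta_m^0)^2\bigr)\Theta_{m,\infty}(\omega)^{-1}.
\]
By \eqref{eq:coeffs-infty}, $q_m(\omega)$ is a bounded multiplication operator satisfying \eqref{eq:limit-gen}; all its factors are well defined on $\Sigma$, since the zeros of $\Theta_{m,\infty}$, together with $-i\gamma_e$, $-i\gamma_m$ and $\overline{W(\Theta_m)}$, all lie in $\C\setminus\Sigma=i\R\cup(-i\gamma_m/2+\R)$.

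Next I would choose $\lambda\in\C$ in the resolvent set of \emph{both} $\cS_m(\omega)$ and $\cS_\infty(\omega)$, possible because by Theorem~\ref{thm: s_1 form_refined} (applied with the coefficient $\theta_m$ and with the constant $\theta_m^0$) the operators $i\cS_m(\omega)$ and $i\cS_\infty(\omega)$ are quasi-$m$-accretive. Put $R_m:=(\cS_m(\omega)-\lambda)^{-1}$, $R_\infty:=(\cS_\infty(\omega)-\lambda)^{-1}$. Writing the two weak equations with common test functions in $\fD$ and subtracting, for $f\in L^2(\Omega)^3$ and $u:=R_\infty f\in\dom(\cS_\infty(\omega))$ one finds $(\fs_m(\omega)-\lambda)[R_m f-u,w]=-(q_m(\omega)\curl_0 u,\curl_0 w)$ for all $w\in\fD$, which, using the bounded operator $(\cS_m(\omega)-\lambda)^{-1}\curl$ furnished by (the argument of) Lemma~\ref{lemma:bddcls}, reads
\[
R_m-R_\infty=-\bigl[(\cS_m(\omega)-\lambda)^{-1}\curl\bigr]\,\bigl[q_m(\omega)\,\curl_0 R_\infty\bigr].
\]
The first bracket is bounded. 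For the second: since $u=R_\infty f\in\dom(\cS_\infty(\omega))$ one has $\curl\curl_0 u=\Theta_{m,\infty}(\omega)\bigl(f+(V_{e,\infty}(\omega)+\lambda)u\bigr)$, hence $\|\curl_0 u\|_{H(\curl,\Omega)}\le C\|f\|$, and since $\curl_0 u\in H(\Div 0,\Omega)$ we may write $q_m(\omega)\,\curl_0 R_\infty=\bigl(q_m(\omega)P_{\ker(\Div)}\bigr)\circ\bigl(\curl_0 R_\infty\bigr)$, a composition of the compact operator $q_m(\omega)P_{\ker(\Div)}\colon H(\curl,\Omega)\to L^2(\Omega)^3$ of Proposition~\ref{thm: compactness} with a bounded one. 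Thus $R_m-R_\infty$ is compact, and therefore $I+\lambda R_m$ and $I+\lambda R_\infty$ differ by a compact operator; by Atkinson's theorem $\cS_m(\omega)=(\cS_m(\omega)-\lambda)(I+\lambda R_m)$ is semi-Fredholm with a given index iff $\cS_\infty(\omega)$ is, so $0\in\sigma_{ek}(\cS_m(\omega))$ iff $0\in\sigma_{ek}(\cS_\infty(\omega))$ for $k=1,\dots,4$, i.e. $\sigma_{ek}(\cS_m)=\sigma_{ek}(\cS_\infty)$.

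It remains to identify $\sigma_{ek}(\cS_\infty)$. On $H(\Div 0,\Omega)$ the operator $\cS_\infty(\omega)=\Theta_{m,\infty}(\omega)^{-1}\curl\curl_0-V_{e,\infty}(\omega)$ is an affine function, with \emph{scalar} coefficients and invertible leading coefficient, of the self-adjoint non-negative operator $\curl\curl_0|_{H(\Div 0,\Omega)}$; consequently $0\in\sigma_{ek}(\cS_\infty(\omega))$ if and only if $\Theta_{m,\infty}(\omega)V_{e,\infty}(\omega)\in\sigma_{ek}(\curl\curl_0|_{H(\Div 0,\Omega)})$, and by \eqref{Veminfty}--\eqref{def:f} the number $\Theta_{m,\infty}(\omega)V_{e,\infty}(\omega)$ equals $f(\omega)$. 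Combined with the previous paragraph this yields the displayed characterization of $\sigma_{ek}(\cS_m)\cap\Sigma$.

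The step I expect to be the main obstacle is exactly the compactness of $R_m-R_\infty$ in the face of the possibly trivial intersection of the operator domains: this forces the whole argument through the common form domain $\fD$ and requires two non-trivial ingredients to cooperate — the bounded extension $(\cS_m(\omega)-\lambda)^{-1}\curl$ of Lemma~\ref{lemma:bddcls}, and the fact that $\curl_0 R_\infty$ actually gains $H(\curl,\Omega)$-regularity and lands in divergence-free fields, which is what makes the compactness encoded in Proposition~\ref{thm: compactness} applicable. The remaining point is bookkeeping: keeping the identification $\sigma_{e1}=\cdots=\sigma_{e4}$ (valid for $\cS_\infty(\omega)$ and for $\curl\curl_0|_{H(\Div 0,\Omega)}$, and transported to $\cS_m(\omega)$ by the compact resolvent difference) so that a single compactness computation settles all four essential spectra at once.
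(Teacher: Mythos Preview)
Your proof is correct and essentially the same as the paper's: both pass to the common form domain $H_0(\curl,\Omega)\cap H(\Div 0,\Omega)$, derive the second resolvent identity, and factor the resolvent difference as (bounded map) $\times$ (compact operator of Proposition~\ref{thm: compactness}) $\times$ ($\curl_0 R_\infty$ bounded into $H(\curl,\Omega)$), concluding via invariance of $\sigma_{ek}$ under compact resolvent perturbation. The only differences are presentational: you introduce an explicit shift $\lambda$ in the common resolvent set and invoke the argument of Lemma~\ref{lemma:bddcls} for the first factor, whereas the paper works at $\lambda=0$ (tacitly assuming $\omega\in\rho(\cS_m)\cap\rho(\cS_\infty)$) and writes that factor in the adjoint form $(\curl_0(\cC_m(\omega)^*-\overline{z_\omega})^{-1})^*$.
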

\begin{proof}
We follow the proof of \cite[Proposition 5.4]{BFMT}. Let $\omega\!\in\!\Sigma$ and to shorten the notation set $z_\omega\!:= V_{e,\infty}(\omega)\!$. Then $\omega\!\in\!\sigma_{ek}(\cS_m)$ if and only if $0\!\in\! \sigma_{ek}(\cC_m(\omega) - z_\omega)$ and $\omega\!\in\!\sigma_{ek}(\cS_\infty)$ if and only if $ 0 \!\in\! \sigma_{ek}(\cC_\infty(\omega) - z_\omega)$ where, $\cC_m, \cC_\infty$ are the operator functions defined in Definition \ref{Smdef}, for $\omega \in \Sigma$.\\
Since the quadratic form ${\mathfrak c}_m(\cdot)$ associated with $\cC_m(\cdot)$ and the form ${\mathfrak c}_\infty$ associated with $\cC_\infty(\cdot)$ have the same domain
$\dom {\mathfrak c}_m(\cdot) \!=\! \dom {\mathfrak c}_\infty \!=\! H_0(\curl,\Omega)$, the second resolvent identity takes the~form
\begin{multline}
\label{eq:form-2nd-res.-id}
   (\cC_m(\omega)\!-\!z_\omega)^{-1} \!- (\cC_\infty(\omega)\!-\!z_\omega)^{-1} \\
   =\! \big( \curl_0 (\cC_m(\omega)^*\!-\!\overline{z_\omega})^{-1} \big)^{\!*} (\Theta_{m,\infty}(\omega)^{-1} \!\!-\Theta_m(\omega)^{-1}) \curl_0 (\cC_\infty(\omega)\!-\!z_\omega)^{-1}
\end{multline}
for $\omega\!\in\! \Sigma \cap (\rho(\cS_m) \cap \rho(\cS_\infty))$. In fact, for arbitrary $u$, $v \in  L^2(\Omega)^3$ and $\omega\!\in\! \Sigma \cap (\rho(\cS_m) \cap \rho(\cS_\infty))$, we can write
\begin{align*}
  &\big\langle \big( (\cC_m(\omega)-z_\omega)^{-1} - (\cC_\infty(\omega)-z_\omega)^{-1} \big) u,v \big\rangle\\
&= \big\langle  (\cC_m(\omega)-z_\omega)^{-1}  u,v \big\rangle - \big\langle u, (\cC_\infty(\omega)-z_\omega)^{-*} v \big\rangle\\
&= \big\langle  (\cC_m(\omega)-z_\omega)^{-1} u,(\cC_\infty(\omega)^*-\overline{z_\omega})(\cC_\infty(\omega)^*-\overline{z_\omega})^{-1}v \big\rangle \\
&\hspace{2cm}- \big\langle  (\cC_m(\omega)-z_\omega)  (\cC_m(\omega)-z_\omega)^{-1}  u, (\cC_\infty(\omega)^*-\overline{z_\omega})^{-1} v \big\rangle\\
	&= ( {\mathfrak c}_\infty(\omega) - {\mathfrak c}_m(\omega) ) \big[	(\cC_m(\omega)-z_\omega)^{-1}  u, (\cC_\infty(\omega)^*-\overline{z_\omega})^{-1} v  \big];
\end{align*}
together with ${\mathfrak c}_m(\omega) = \langle\Theta_m(\omega)^{-1} \curl_0 \cdot, \curl_0 \cdot\rangle$ and analogously for ${\mathfrak c}_\infty(\omega)$, the identity \eqref{eq:form-2nd-res.-id}~follows.
The first factor on the right-hand side of \eqref{eq:form-2nd-res.-id} is bounded since $\dom \cC_m(\cdot) \subset \dom \curl_0$.
By assumption \eqref{eq:coeffs-infty}, for fixed $\omega \in \Sigma$, \textcolor{black}{condition \eqref{eq:limit-gen} of Proposition \ref{thm: compactness}
is satisfied by} $(\Theta_m(\omega)^{-1}-\Theta_\infty(\omega)^{-1})$ and thus the operator $(\Theta_m(\omega)^{-1}-\Theta_\infty(\omega)^{-1}) P_{\ker\Div}$ is com\-pact from $H(\curl,\Omega)$ to $H(\Div 0,\Omega)\subset L^2(\Omega)^3$. The boundedness of $\curl_0 (\cC_\infty(\omega)-z_\omega)^{-1}$ from $H(\Div 0, \Omega)$ to $H(\curl,\Omega)$ follows from
\[
\begin{split}
\curl\curl_0 (\cC_\infty(\omega)-z_\omega)^{-1} &= \curl \curl_0 (\curl \curl_0 - f(\omega))^{-1} \Theta_m(\omega)\\
&= I + f(\omega)(\curl \curl_0 - f(\omega))^{-1}
\end{split}
\]
where $f$ is defined in \eqref{def:f}. Now, $f(it) < 0$, $t \in \R$, hence
$$0 \leq I + f(it)(\curl \curl_0 - f(it))^{-1} \leq I,$$
and the boundedness for $\omega = it$ follows. For a general $\omega \in (\rho(S_{\infty})\cap \Sigma)$, we have
\[
(\cC_\infty(\omega)-z_\omega)^{-1} = (\cC_\infty(it)-z_{it})^{-1} + (\cC_\infty(it)-z_{it})^{-1} (z_{it} - z_{\omega})(\cC_\infty(\omega)-z_\omega)^{-1}
\]
hence, upon applying $\curl \curl_0$, $\curl \curl_0 (\cC_\infty(\omega)-z_\omega)^{-1}$ is seen to be bounded.\\
Altogether, the operator
\begin{align*}
 &(\Theta_{m,\infty}(\omega)^{-1} -\Theta_m(\omega)^{-1}) \curl_0 (\cC_\infty-z_\omega)^{-1}  \\
 &\hspace{3cm}= (\Theta_{m,\infty}(\omega)^{-1} -\Theta_m(\omega)^{-1}) P_{\ker(\Div)}  \curl_0 (\cC_\infty-z_\omega)^{-1}
\end{align*}
is compact.
Hence, by \eqref{eq:form-2nd-res.-id}, the resolvent difference of $\cS_m(\omega)$ and $\cS_\infty(\omega)$ is compact and, by \cite[Thm.\ IX.2.4]{EE},
$\sigma_{ek}(\cS_m(\omega))=\sigma_{ek}(\cS_\infty(\omega))$ follows for all $k=1,2,3,4$, $\omega \in \Sigma$, \textcolor{black}{hence $0
\in \sigma_{ek}(\cS_m(\omega))$ if and only if $0 \in \sigma_{ek}(\cS_\infty(\omega))$, for $\omega\in\Sigma$. This means that} $\sigma_{ek}(\cS_m) \cap \Sigma=\sigma_{ek}(\cS_\infty) \cap \Sigma$.
\end{proof}

\begin{rem} \label{rem: unif_bound_Cinfty}
In the proof of Prop. \ref{thm: difference res}, it is shown that for $\omega = it$, $t > 0$,  $\norma{\curl_0 (\cC_\infty(\omega)\!-\!z_\omega)^{-1}}_{\cB(H(\Div 0, \Omega),H(\curl,\Omega))} \leq C$, where the constant $C>0$ does not depend on $\Omega$. This will be important in Section \ref{sec:lim_ess_spec}, where families of domains are considered.
\end{rem}

We further state the following abstract result regarding the spectrum of triangular block operator matrices, a proof of which can be found in \cite[Theorem 8.1]{BFMT}. Following \cite[Chp.IX, p.414]{EE}, given a linear operator $T$ densely defined in $\cH$, we set $\sigma^*_{e2}(T) = \{ \omega \in \C : {\rm def} (T - \omega) = \infty \}$, with the convention that ${\rm def} (T - \omega) = \infty$ if $\ran (T - \omega)$ is not closed.

\begin{theorem}
\label{thm: ess spec}
Let $\cA$ be defined by
\[
\cA = \begin{pmatrix}
A & 0 \\
C & D
\end{pmatrix}
\]
with $A$, $D$ are densely defined, $C$, $D$ are closable, $\dom(A) \subset \dom(C)$ and $\rho(A) \neq \emptyset$. Then
\begin{equation}
\label{se2}
  \big( \sigma_{e2}(A) \setminus \sigma_{e2}^*(\overline{D} ) \big) \cup \sigma_{e2}(\overline{D} ) \subset \sigma_{e2}(\overline{\cA})
	\subset \sigma_{e2}(A)  \cup \sigma_{e2}(\overline{D} ),
\vspace{-1mm}	
\end{equation}
and hence
\[
  \sigma_{e2}(\overline{\cA}) \cup \big( \sigma_{e2}(A) \cap \sigma_{e2}^*(\overline{D} ) \big)
	= \sigma_{e2}(A)  \cup \sigma_{e2}(\overline{D} );
\]
in particular, if  $\sigma_{e2}^*(\overline{D})  = \sigma_{e2}(\overline{D})$ or if $ \sigma_{e2}(A) \cap \sigma_{e2}^*(\overline{D})=\emptyset$, then
\[
  \sigma_{e2}(\overline{\cA}) = \sigma_{e2}(A)  \cup \sigma_{e2}(\overline{D} ).
\]
\end{theorem}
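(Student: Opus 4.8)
The plan is to follow the standard Schur-complement/triangular structure argument, working with $\sigma_{e2}$ throughout since the five essential spectra coincide for the operators at hand (via $\cJ$-selfadjointness when applicable, or can be treated directly). First I would establish the inclusion $\sigma_{e2}(\overline{\cA}) \subset \sigma_{e2}(A) \cup \sigma_{e2}(\overline{D})$. Take $\omega \notin \sigma_{e2}(A) \cup \sigma_{e2}(\overline{D})$; then $A - \omega$ and $\overline{D} - \omega$ are both semi-Fredholm with finite deficiency (indeed Fredholm, after removing finite-dimensional obstructions). Using $\rho(A) \neq \emptyset$ together with a resolvent-type parametrix, one writes $\overline{\cA} - \omega$ in a factorised block form
\begin{equation*}
\overline{\cA} - \omega = \begin{pmatrix} I & 0 \\ \overline{C}(A-\omega)^{-1} & I \end{pmatrix} \begin{pmatrix} A - \omega & 0 \\ 0 & \overline{D} - \omega \end{pmatrix}
\end{equation*}
modulo a careful treatment of domains and closures; since $\dom(A) \subset \dom(C)$, the operator $\overline{C}(A-\omega)^{-1}$ is well-defined and bounded (closed graph theorem), so the first factor is boundedly invertible. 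Hence $\overline{\cA} - \omega$ inherits the Fredholm property from the diagonal, and $\omega \notin \sigma_{e2}(\overline{\cA})$.

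Next I would prove the lower inclusion $\big(\sigma_{e2}(A) \setminus \sigma_{e2}^*(\overline{D})\big) \cup \sigma_{e2}(\overline{D}) \subset \sigma_{e2}(\overline{\cA})$. The part $\sigma_{e2}(\overline{D}) \subset \sigma_{e2}(\overline{\cA})$ is the easier one: a singular sequence $(v_n)$ for $\overline{D} - \omega$ lifts to the singular sequence $(0, v_n)$ for $\overline{\cA} - \omega$, since the first row of $\overline{\cA}$ acting on $(0,v_n)$ vanishes and the second row gives $(\overline{D}-\omega)v_n \to 0$. For the part involving $\sigma_{e2}(A) \setminus \sigma_{e2}^*(\overline{D})$: take $\omega \in \sigma_{e2}(A)$ with $\mathrm{def}(\overline{D}-\omega) < \infty$, so $\overline{D} - \omega$ is upper semi-Fredholm with closed range of finite codimension. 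Pick a singular sequence $(u_n)$ for $A - \omega$; the vectors $\overline{C} u_n$ need not be small, but since $\ran(\overline{D}-\omega)$ has finite codimension and the orthogonal complement is finite-dimensional, after projecting and passing to a subsequence one can correct $(u_n, 0)$ by a sequence $(0, w_n)$ with $(\overline{D}-\omega)w_n$ cancelling the component of $\overline{C}u_n$ in $\ran(\overline{D}-\omega)$, leaving only a vanishing-or-finite-rank remainder; choosing $u_n$ additionally to be weakly null (possible as $\omega \in \sigma_{e2}(A)$) and orthonormalising produces a singular sequence for $\overline{\cA} - \omega$. This shows $\omega \in \sigma_{e2}(\overline{\cA})$.

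The two displayed consequences are then formal set-theoretic manipulations: taking the union of the lower inclusion with $\sigma_{e2}(A) \cap \sigma_{e2}^*(\overline{D})$ and using $(\sigma_{e2}(A) \setminus \sigma_{e2}^*(\overline{D})) \cup (\sigma_{e2}(A) \cap \sigma_{e2}^*(\overline{D})) = \sigma_{e2}(A)$ yields $\sigma_{e2}(\overline{\cA}) \cup (\sigma_{e2}(A) \cap \sigma_{e2}^*(\overline{D})) = \sigma_{e2}(A) \cup \sigma_{e2}(\overline{D})$ (combined with the upper inclusion for the reverse containment); and if either $\sigma_{e2}^*(\overline{D}) = \sigma_{e2}(\overline{D})$ or $\sigma_{e2}(A) \cap \sigma_{e2}^*(\overline{D}) = \emptyset$ the correction term is absorbed into $\sigma_{e2}(\overline{D})$ or vanishes, giving the clean identity. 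The main obstacle I anticipate is the bookkeeping around closures and domains: $C$ and $D$ are only assumed closable, so $\overline{C}(A-\omega)^{-1}$ and the block factorisation must be justified on a suitable core and then extended, and one must check that the singular sequences constructed actually lie in $\dom(\overline{\cA})$ rather than merely in a core — this is where the hypotheses $\dom(A) \subset \dom(C)$ and $\rho(A) \neq \emptyset$ do the real work. Since this is an abstract result cited from \cite[Theorem 8.1]{BFMT}, I would simply refer to that reference for the full domain-theoretic details.
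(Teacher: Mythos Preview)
The paper does not actually prove this theorem: it states the result and refers the reader to \cite[Theorem~8.1]{BFMT}, which is precisely what you do in your final sentence. In that sense your proposal matches the paper exactly.

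Your sketch goes beyond what the paper offers and is broadly on the right track, but two points would need tightening before it could stand alone. First, for the upper inclusion your displayed factorisation invokes $(A-\omega)^{-1}$, which need not exist: $\omega\notin\sigma_{e2}(A)$ only guarantees $A-\omega\in\Phi_+$, i.e.\ closed range and finite \emph{nullity}, not finite deficiency or invertibility (you wrote ``finite deficiency'', which is the wrong index). The cleaner argument is a direct singular-sequence one: given a singular sequence $(u_n,v_n)$ for $\overline{\cA}-\omega$, the first row yields $(A-\omega)u_n\to 0$; either $\|u_n\|\not\to 0$ and one extracts a singular sequence for $A-\omega$, or $\|u_n\|\to 0$, in which case $A$-boundedness of $C$ (from $\rho(A)\neq\emptyset$, $\dom(A)\subset\dom(C)$, and the closed graph theorem) forces $Cu_n\to 0$, so $v_n$ is singular for $\overline D-\omega$. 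Second, in your correction step for $\omega\in\sigma_{e2}(A)\setminus\sigma_{e2}^*(\overline D)$ you must also arrange the corrected sequence $(u_n,-w_n)$ to be weakly null; this works once one notes that $u_n\rightharpoonup 0$ in the $A$-graph norm (since $(A-\omega)u_n\to 0$), so the finite-rank map $PC$ sends $u_n$ to $0$ strongly, and then a standard subtraction handles the possible weak limit of the bounded $w_n$.
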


We are now in position to prove the following theorem, which yields a decomposition of the $\sigma_{e}(\cL)$ as the union of the essential spectrum of the constant-coefficient pencil $\cS_{\infty}$ and the essential spectrum of the pencil of bounded multiplication operators $V_e(\cdot)$, compressed to gradient fields. For the convenience of the reader, the relations between the several different operators and their essential spectra are represented in Fig. \ref{fig:graph_ALS}.

\begin{theorem}
\label{sigma-ess}
Suppose that $\theta_e$, $\theta_m$ satisfy the limiting assumption \eqref{eq:coeffs-infty}. Let $P_\nabla := \id - P_{\ker(\Div)}$ be the orthogonal projection from $ L^2(\Omega)^3\!=\!\nabla \dot H^1_0(\Omega) \oplus H(\Div 0,\Omega)$ onto $\nabla \dot{H}^1_0(\Omega)$. Let
$G(\omega)$ denote the operator
\begin{equation}\label{B1def}
\mbox{$G(\omega) = -P_{\nabla} V_e(\omega) P_{\nabla}$, with $\dom (G(\omega))=\nabla \dot{H}^1_0(\Omega)$},
\end{equation}
viewed as an operator from the space $\nabla \dot{H}^1_0(\Omega)$ to $\nabla \dot{H}^1_0(\Omega)$. Then
\[
   \sigma_{ek}(\cS_1) \cap \Sigma = (\sigma_{ek}(\cS_\infty) \cup \sigma_{ek}(G)) \cap \Sigma, \quad k=1,2,3,4,
\]
where $\sigma_{ek}(\cS_\infty)$ is described in Prop. \ref{prop: spectrum infty}, and
\[
\sigma_{ek}(G) \subset
\begin{cases}- i [0, \gamma_e), \quad &\textup{if $- \frac{\gamma_e^2}{4} + \norma{\theta^2_e}_{\infty} \leq 0$,} \\
- i [0, \gamma_e) \cup \left(-d_e -i \frac{\gamma_e}{2}, d_e -i \frac{\gamma_e}{2}\right) \quad &\textup{if $- \frac{\gamma_e^2}{4} + \norma{\theta^2_e}_{\infty} > 0$.}
\end{cases}
\]
with $d_e \in \left(- \sqrt{- \frac{\gamma_e^2}{4} + \norma{\theta^2_e}_{\infty}}, \sqrt{- \frac{\gamma_e^2}{4} + \norma{\theta^2_e}_{\infty}} \right)$.
\end{theorem}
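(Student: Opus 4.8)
The plan is to split the pencil $\cS_1$ into the direct sum of its restriction to divergence-free fields and its restriction to gradient fields, up to a compact perturbation, and then apply the triangular-block result Theorem~\ref{thm: ess spec} together with Proposition~\ref{thm: difference res}. First I would decompose $L^2(\Omega)^3 = \nabla \dot{H}^1_0(\Omega) \oplus H(\Div 0, \Omega)$ and write $\cS_1(\omega)$ as a $2\times 2$ operator matrix with respect to this splitting. Since $\curl_0$ annihilates gradients and $\curl \Theta_m(\omega)^{-1}\curl_0$ maps into $H(\Div 0,\Omega)$, the operator part of $\cS_1(\omega)$ is block-diagonal; the only off-diagonal and diagonal-coupling terms come from the multiplication operator $V_e(\omega)$, which does not respect the Helmholtz splitting because $\theta_e$ is non-constant. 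Writing $V_e(\omega) = V_{e,\infty}(\omega) + (V_e(\omega) - V_{e,\infty}(\omega))$, the constant part $V_{e,\infty}(\omega)$ \emph{is} a scalar multiple of the identity and hence block-diagonal, while the difference $V_e(\omega) - V_{e,\infty}(\omega)$ is a multiplication operator whose coefficient tends to $0$ at infinity by \eqref{eq:coeffs-infty}. By Proposition~\ref{thm: compactness} (and Remark~\ref{rem:compactness_bdd} in the bounded case), applied as in the proof of Proposition~\ref{thm: difference res}, the cross terms $P_\nabla (V_e(\omega) - V_{e,\infty}(\omega)) P_{\ker\Div}$ and $P_{\ker\Div}(V_e(\omega) - V_{e,\infty}(\omega)) P_\nabla$, as well as the correction to each diagonal block, are relatively compact; since relatively compact perturbations do not change $\sigma_{ek}$ for $k=1,2,3,4$ (by \cite[Thm.~IX.2.4]{EE}), we may replace $\cS_1(\omega)$ by the block-diagonal pencil $\diag(-G(\omega)^\sim, \cS_m(\omega))$ for the purposes of computing the essential spectrum, where the upper block acts on $\nabla\dot H^1_0(\Omega)$ and, after the compact correction, equals $-P_\nabla V_{e,\infty}(\omega)P_\nabla$ up to a compact operator, i.e. has the same essential spectrum as $G(\omega) = -P_\nabla V_e(\omega) P_\nabla$.

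Next I would invoke Theorem~\ref{thm: ess spec} (or simply additivity of $\sigma_{ek}$ over an orthogonal direct sum, once the matrix is genuinely block-diagonal) to conclude $\sigma_{ek}(\cS_1) \cap \Sigma = (\sigma_{ek}(\cS_m) \cup \sigma_{ek}(G)) \cap \Sigma$. Then Proposition~\ref{thm: difference res} gives $\sigma_{ek}(\cS_m) \cap \Sigma = \sigma_{ek}(\cS_\infty) \cap \Sigma$, yielding the displayed equality $\sigma_{ek}(\cS_1)\cap\Sigma = (\sigma_{ek}(\cS_\infty)\cup\sigma_{ek}(G))\cap\Sigma$. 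Here one must keep careful track of the $\omega$-dependence of the domains: $\dom(\cS_m(\omega))$ may be $\omega$-dependent and may intersect $\dom(\cS_\infty(\omega))$ trivially, which is exactly why the argument is routed through the \emph{resolvent} identity \eqref{eq:form-2nd-res.-id} at fixed $\omega$ rather than through the forms directly, and why one needs $\rho(\cS_m(\omega))\cap\rho(\cS_\infty(\omega))\cap\Sigma \neq \emptyset$; the latter follows from Remark~\ref{S1rmk} applied along the imaginary axis and an analytic-Fredholm continuation inside the connected components of $\Sigma$.

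Finally, for the enclosure of $\sigma_{ek}(G)$ I would note that $G(\omega) = -P_\nabla V_e(\omega) P_\nabla$ is a compression of the bounded \emph{scalar} multiplication operator $-V_e(\omega) = \frac{\Theta_e(\omega)}{(\omega+i\gamma_e)(\omega+i\gamma_m)}$, so $0 \in \sigma_{ek}(G(\omega))$ forces $0$ to be an essential-spectrum value of a one-dimensional Schur-type problem: concretely, $\omega\in\sigma_{ek}(G)$ implies that the essential range of the function $x \mapsto \Theta_e(\omega,x) = \omega^2 + i\gamma_e\omega - \theta_e^2(x)$ meets a set dictated by $(\omega+i\gamma_e)(\omega+i\gamma_m) \cdot (\text{spectrum of the compression})$. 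Solving $\omega^2 + i\gamma_e\omega - \theta_e^2(x) = 0$ for $\omega$ gives $\omega = -\tfrac{i\gamma_e}{2} \pm \sqrt{-\tfrac{\gamma_e^2}{4} + \theta_e^2(x)}$; when $-\tfrac{\gamma_e^2}{4}+\|\theta_e^2\|_\infty \le 0$ the square root is purely imaginary and the locus lies on $-i[0,\gamma_e)$, while when $-\tfrac{\gamma_e^2}{4}+\|\theta_e^2\|_\infty > 0$ a real part can appear, producing in addition a horizontal segment on the line $\im\omega = -\gamma_e/2$ of half-length bounded by $\sqrt{-\gamma_e^2/4 + \|\theta_e^2\|_\infty}$; intersecting with $\Sigma$ and accounting for the poles $-i\gamma_e$, $-i\gamma_m$ being excluded gives the stated inclusions with $d_e$ in the indicated open interval. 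The precise identification of $\sigma_{ek}(G)$ with the closure of the pointwise root locus when $\theta_e$ is continuous, versus the more delicate behaviour for discontinuous $\theta_e$, would be deferred to Section~\ref{sec:red_op} and Proposition~\ref{prop: spectrum infty}.

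The main obstacle I anticipate is \textbf{controlling the domains and the compactness of the cross terms simultaneously}: because $\dom(\cS_1(\omega)) = \dom(\cS_m(\omega))$ need not be contained in $H(\Div 0,\Omega) \oplus \nabla\dot H^1_0(\Omega)$ in any $\omega$-uniform way, one cannot naively write down the $2\times 2$ matrix and must instead establish compactness at the level of resolvent differences (as in Proposition~\ref{thm: difference res}), which requires knowing that $\curl_0(\cS_\infty(\omega)-z_\omega)^{-1}$ and the analogous operator for the gradient block are bounded from the relevant Hilbert spaces into $H(\curl,\Omega)$ — a point handled for the divergence-free block in the proof of Proposition~\ref{thm: difference res} but which needs a separate (easier, since $\curl_0$ kills gradients) verification for the gradient block, together with the observation that $G(\omega)$ is bounded and so its essential spectrum is governed purely by the essential range of a scalar symbol.
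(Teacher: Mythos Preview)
Your overall architecture is right --- Helmholtz decomposition, compact perturbation, triangular block result, then Proposition~\ref{thm: difference res} --- but there is a genuine gap in the compactness step that breaks the argument as written.

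You claim that both cross terms $P_\nabla (V_e(\omega)-V_{e,\infty}(\omega)) P_{\ker\Div}$ and $P_{\ker\Div}(V_e(\omega)-V_{e,\infty}(\omega)) P_\nabla$, together with the diagonal correction $P_\nabla(V_e(\omega)-V_{e,\infty}(\omega))P_\nabla$, are relatively compact, so that $\cS_1(\omega)$ becomes block-\emph{diagonal} modulo compacts. This is false for the terms that act on the gradient block. Proposition~\ref{thm: compactness} establishes compactness of $m\,P_{\ker\Div}$ from $H(\curl,\Omega)$ into $L^2(\Omega)^3$; the mechanism is the compact embedding $H_0(\curl,\Omega_R)\cap H(\Div,\Omega_R)\hookrightarrow L^2(\Omega_R)^3$, which requires \emph{both} $\curl$- and $\Div$-control on the input. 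On the gradient subspace $\nabla\dot H^1_0(\Omega)$, however, $\curl_0$ vanishes identically, so the $\cS_1(\omega)$-graph norm is equivalent to the $L^2$-norm there. Relative compactness of $P_{\ker\Div}(V_e-V_{e,\infty})P_\nabla$ or of $P_\nabla(V_e-V_{e,\infty})P_\nabla$ would therefore force multiplication by $(V_e-V_{e,\infty})(\omega,\cdot)$ to be compact on $L^2(\Omega)^3$, which it is not: a multiplication operator by a bounded function vanishing at infinity on an unbounded domain is never compact on $L^2$. In particular your ``compact correction'' that should identify $-P_\nabla V_{e,\infty}(\omega)P_\nabla$ with $G(\omega)=-P_\nabla V_e(\omega)P_\nabla$ does not exist; had your reduction worked, the gradient block would have essential spectrum equal to the two zeros of the scalar $\Theta_{e,\infty}$, contradicting the statement of the theorem.

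The paper sidesteps this by subtracting only the one perturbation that \emph{is} relatively compact, namely $M(\omega)=(V_e(\omega)-V_{e,\infty}(\omega))P_{\ker\Div}$, which is $\cS_1(\omega)$-compact because $\dom(\cS_1(\omega))\subset H_0(\curl,\Omega)$. The resulting operator $\cT(\omega)=\cS_1(\omega)+M(\omega)$ is then \emph{lower-triangular} (not diagonal) in the Helmholtz decomposition,
\[
\cT(\omega)=\begin{pmatrix} G(\omega) & 0 \\ P_{\ker\Div}V_e(\omega)|_{\nabla\dot H^1_0(\Omega)} & \cS_m(\omega) \end{pmatrix},
\]
with the full $V_e$ (not $V_{e,\infty}$) retained in the gradient corner, so $G(\omega)=-P_\nabla V_e(\omega)P_\nabla$ arises directly. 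The surviving off-diagonal entry $P_{\ker\Div}V_e(\omega)|_{\nabla\dot H^1_0(\Omega)}$ is bounded but not compact, and this is exactly what Theorem~\ref{thm: ess spec} is designed to handle. From there Proposition~\ref{thm: difference res} replaces $\cS_m$ by $\cS_\infty$ as you outlined. Your discussion of the enclosure for $\sigma_{ek}(G)$ and your remarks about domain issues are essentially correct; the fix is solely to abandon the attempt to diagonalise and instead keep the triangular structure.
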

\begin{proof}
Let $\omega\in\Sigma$. The operator $M(\omega):=(V_e(\omega)-V_{e,\infty}(\omega))P_{\ker(\Div)}$ in $ L^2(\Omega)^3$ is $\curl_0$-compact by Proposition~\ref{thm: compactness}, and hence $\cC_m(\omega)$-compact, since $\dom(\cC_m) \subset \dom(\curl_0)$, where $\cC_m(\omega) = \curl \Theta_m(\omega)^{-1} \curl_0$ is defined in Definition \ref{Smdef}. Since $\cS_1(\omega)=\cC_m(\omega) + V_e(\omega)$, with $V_e(\omega)$ bounded multiplication operator, bounded sequences in the $\cS_1(\omega)$-graph norm have bounded $\cC_m(\omega)$-graph norms. Hence $M(\omega)$ is $\cS_1(\omega)$-compact which
yields $\sigma_e(\cS_1(\omega))=\sigma_e(\cS_1(\omega)+M(\omega))$.

Since $\nabla \dot{H}^1_0(\Omega)\subset\ker(\curl_0)$ and hence $\curl_0 P_\nabla = P_\nabla \curl=0$,
$\nabla \dot{H}^1_0(\Omega)$ is a reducing subspace for $\curl \Theta_m(\omega)^{-1} \curl_0$.
 Therefore the operator
\begin{equation}\label{eq:omcT}
\begin{aligned}
&\cT(\omega):= \cS_1(\omega)+M(\omega) \\
&= \curl \Theta_m(\omega)^{-1} \curl_0 - V_e(\omega)P_\nabla - V_e(\omega) P_{\ker(\Div)} + (V_e(\omega)-V_{e,\infty}(\omega))P_{\ker(\Div)} \\
&= \cC_m(\omega)-V_e(\omega) P_\nabla - V_{e,\infty}(\omega) P_{\ker(\Div)}
\end{aligned}	
\end{equation}
which is a bounded perturbation of $\cC_m(\omega)$ admits an operator matrix representation with respect to the decomposition
$ L^2(\Omega)^3= \nabla \dot{H}^1_0(\Omega) \oplus H(\Div0, \Omega)$ given \vspace{-1mm} by
\begin{align}
\cT(\omega) &=
\begin{pmatrix}
\hspace{5.7mm} P_\nabla \cT(\omega) |_{\nabla \dot H^1_0(\Omega)} &  \hspace{6.5mm} P_\nabla \cT(\omega) |_{H(\Div0, \Omega)}\\
P_{\ker\Div} \cT(\omega) |_{\nabla \dot H^1_0(\Omega)} & P_{\ker\Div} \cT(\omega) |_{H(\Div0, \Omega)} &
\end{pmatrix}
\nonumber
\\
&=
\begin{pmatrix}
\hspace{5.5mm} -P_\nabla V_e(\omega) |_{\nabla \dot H^1_0(\Omega)}  & 0 \\
-P_{\ker\Div} V_e(\omega) |_{\nabla \dot H^1_0(\Omega)} & P_{\ker\Div} (\cC_m(\omega)-V_{e,\infty}(\omega)) |_{H(\Div0, \Omega)}\\
\end{pmatrix}
\nonumber
\\
&=
\begin{pmatrix}
\hspace{5.7mm} G(\omega) &  0  \\
P_{\ker\Div} V_e(\omega) |_{\nabla \dot H^1_0(\Omega)} &  \cS_m(\omega)
\end{pmatrix}.
\label{eq: A^0}
\\[-7mm] \nonumber
\end{align}
with domain $\dom(\cT(\omega))=\nabla \dot{H}^1_0(\Omega)\oplus \dom(\cS_m(\omega))$. Apart from $\cS_m(\omega)$, the other two matrix entries in $\cT(\omega)$ are bounded and everywhere defined,
and $\sigma_{e2}(\cS_m(\omega)) = \sigma^*_{e2}(\cS_m(\omega))$, due to $J$-self-adjointness.
Thus Theorem \ref{thm: ess spec} and Proposition~\ref{thm: difference res} yield
\vspace{-1mm} that
$$
    \sigma_{e2}(\cT(\omega))
		= \sigma_{e2}(\cS_m(\omega)) \cup \sigma_{e2}(G(\omega))
		= \sigma_{e2}(\cS_\infty(\omega)) \cup \sigma_{e2}(G(\omega))
$$
and hence, since $\omega\in\Sigma$ was arbitrary,
\begin{align*}
\sigma_{e2}(\cS_1) \cap \Sigma&=\sigma_{e2}(\cS_1+M) \cap \Sigma=\sigma_{e2}(\cT) \cap \Sigma = (\sigma_{e2}(\cS_\infty) \cup \sigma_{e2}(G)) \cap \Sigma.
\qedhere
\\[-6mm]
\end{align*}
\end{proof}

\begin{rem} \label{rem:bdd_dec}
If $\Omega$ is bounded, we claim that $\sigma_{ek}(\cS_1) = \sigma_{ek}(G)$ for $k = 1,2,3,4$. To see this, one applies the Helmholtz decomposition in $\nabla H^1_0(\Omega) \oplus H(\Div 0, \Omega)$ to obtain the block operator matrix representation
\[
\cT(\omega) = \begin{pmatrix}
P_{\nabla}\cS_1(\omega)P_{\nabla} & P_{\nabla}\cS_1(\omega)P_{\ker \Div} \\
P_{\ker \Div}\cS_1(\omega)P_{\nabla} & P_{\ker \Div}\cS_1(\omega)P_{\ker \Div}
\end{pmatrix}
\]
of $\cS_1(\omega)$. In particular, $\sigma_{ek}(\cS_1) = \sigma_{ek}(\cT)$. Since $\Omega$ is bounded, $P_{\ker \Div}\cS_1(\omega)P_{\ker \Div}$ has compact resolvent. The term $P_{\nabla}\cS_1(\omega)P_{\ker \Div}$ is compact as a consequence of Remark \ref{rem:compactness_bdd}, and therefore can be discarded to leave a lower triangular operator matrix. The claim then follows arguing as in the proof of Theorem \ref{sigma-ess}, by recalling that $P_\nabla \cS_1(\omega) P_\nabla = G(\omega)$.
%
%
\end{rem}

\begin{figure} \label{fig:graph_ALS}
\centering
\includegraphics{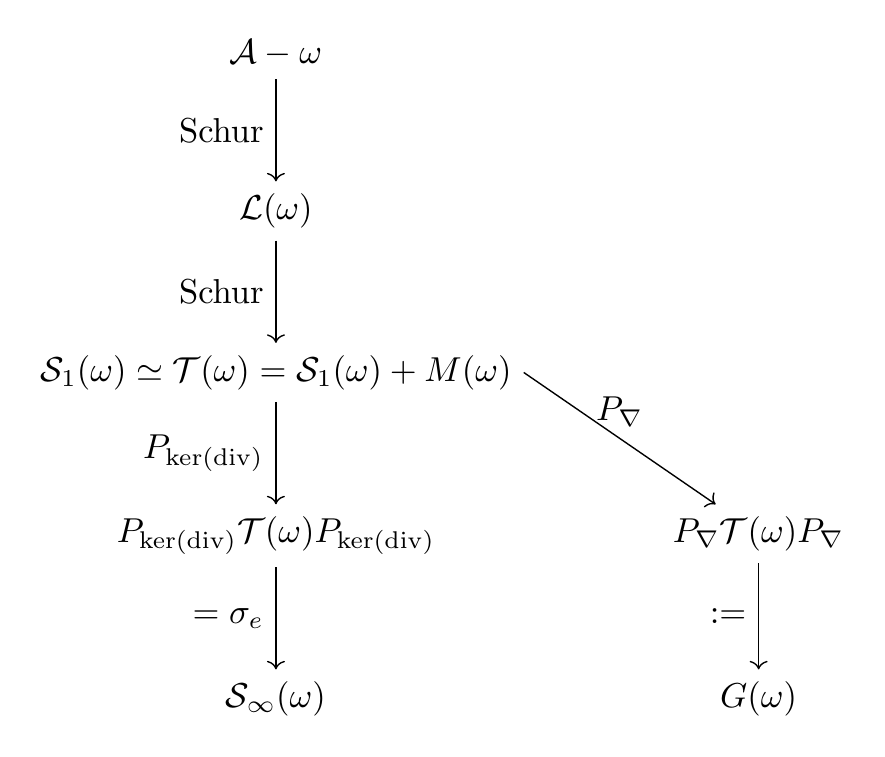}
\caption{Relations between $\cA$, $\cL$ and $\cS_1$. $\sigma_e(\cS_1)$ then decomposes in $\sigma_e(\cS_\infty) \cup \sigma_e(G)$, according to Thm \ref{sigma-ess}}
\end{figure}

\section{Spectrum of the reduced operators}
\label{sec:red_op}

Recall that
\[\cS_{1,\infty}(\omega)|_{\ker(\Div)} = \cS_{\infty}(\omega)\]
for all $\omega \in \Sigma$, where $\cS_\infty$ is defined in Definition \ref{Smdef}. Since $\cS_\infty$ is a constant-coefficients operator, by classical symbol analysis we deduce that

\begin{prop} \label{prop: spectrum infty} For $f$ as in \eqref{def:f} we have
\[
\sigma_e(\cS_\infty) \cap \Sigma = \bigg\{ \omega \in \C : f(\omega)= t, t \in \sigma_e(\curl \curl_0|_{\ker(\Div)}) \bigg\} \cap \Sigma
\]
\end{prop}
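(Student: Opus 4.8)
The plan is to observe that, for fixed $\omega\in\Sigma$, the operator $\cS_\infty(\omega)$ differs from a shift of the self-adjoint operator $\curl\curl_0|_{\ker(\Div)}$ only by multiplication by an invertible scalar, and then to transfer the essential spectrum through this identification.

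First I would combine the definitions. From \eqref{Veminfty}, \eqref{Thetainfdef} and \eqref{def:f} one reads off the elementary identity $\Theta_{m,\infty}(\omega)\,V_{e,\infty}(\omega)=f(\omega)$. On the other hand, by Definition \ref{Smdef} the operator $\cS_\infty(\omega)=\Theta_{m,\infty}(\omega)^{-1}\curl\curl_0 - V_{e,\infty}(\omega)$ acts in $H(\Div 0,\Omega)$ with domain
\[
\dom(\cS_\infty(\omega)) = \{\,u\in H_0(\curl,\Omega)\cap H(\Div 0,\Omega)\,:\,\curl u\in H(\curl,\Omega)\,\},
\]
which does not depend on $\omega$ and coincides with $\dom(\curl\curl_0|_{\ker(\Div)})$; on this domain $\curl\curl_0$ is exactly $\curl\curl_0|_{\ker(\Div)}$. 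Multiplying by the constant factor $\Theta_{m,\infty}(\omega)$ therefore gives
\[
\Theta_{m,\infty}(\omega)\,\cS_\infty(\omega) = \curl\curl_0|_{\ker(\Div)} - f(\omega),\qquad \omega\in\Sigma.
\]

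Second I would check that the scalar prefactor does no harm. The zeros of $\omega\mapsto\Theta_{m,\infty}(\omega) = (\omega+i\gamma_m/2)^2 + \tfrac{\gamma_m^2}{4} - (\theta_m^0)^2$ lie on $i\R\cup(-i\gamma_m/2+\R) = \C\setminus\Sigma$, as recorded for the singular sets in the remark after Theorem \ref{thm: s_1 form_refined}; hence $\Theta_{m,\infty}(\omega)\neq 0$ for $\omega\in\Sigma$, and multiplication by $\Theta_{m,\infty}(\omega)^{-1}$ is a bounded, boundedly invertible operator on $H(\Div 0,\Omega)$, which does not change $\sigma_{e2}$. Together with $\sigma_{e2}(T-\lambda)=\sigma_{e2}(T)-\lambda$ this yields, for each $\omega\in\Sigma$,
\[
0\in\sigma_{e2}(\cS_\infty(\omega)) \ \Longleftrightarrow\ 0\in\sigma_{e2}\bigl(\curl\curl_0|_{\ker(\Div)}-f(\omega)\bigr) \ \Longleftrightarrow\ f(\omega)\in\sigma_{e2}\bigl(\curl\curl_0|_{\ker(\Div)}\bigr),
\]
and letting $\omega$ range over $\Sigma$ and unravelling the definition of $\sigma_e(\cS_\infty)$ produces the asserted equality. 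Since $\cS_\infty(\omega)$ is $J$-self-adjoint, the same argument gives the statement for $\sigma_{ek}$, $k=1,2,3,4$, by \cite[Theorem IX.1.6]{EE}.

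I do not expect a serious obstacle here. The only points needing some care are: (i) the $\omega$-independence of $\dom(\cS_\infty(\omega))$ and its identification with $\dom(\curl\curl_0|_{\ker(\Div)})$ — immediate from Definition \ref{Smdef}, and this is exactly where the constancy of the coefficients enters, since, unlike $\cS_m$, the domain of $\cS_\infty(\omega)$ does not move with $\omega$; and (ii) that $\curl\curl_0 = \curl_0^*\curl_0$ (with $\curl_0^*=\curl$) is self-adjoint and non-negative with $H(\Div 0,\Omega)$ a reducing subspace, so that $\curl\curl_0|_{\ker(\Div)}\ge 0$ and $\sigma_{e2}(\curl\curl_0|_{\ker(\Div)})\subset[0,\infty)$. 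Point (ii) forces $f(\omega)\notin[0,\infty)\Rightarrow\omega\notin\sigma_e(\cS_\infty)$, which is what makes the right-hand set a union of one-dimensional curves and is the substance of the phrase ``classical symbol analysis'' used above.
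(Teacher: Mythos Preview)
Your proposal is correct and follows essentially the same approach as the paper: both observe that for $\omega\in\Sigma$ the scalar $\Theta_{m,\infty}(\omega)$ is nonzero, so $\cS_\infty(\omega)$ is a nonzero scalar multiple of $\curl\curl_0|_{\ker(\Div)}-f(\omega)$, whence $0\in\sigma_e(\cS_\infty(\omega))$ iff $f(\omega)\in\sigma_e(\curl\curl_0|_{\ker(\Div)})$. Your write-up is in fact more detailed than the paper's (you spell out the identity $\Theta_{m,\infty}(\omega)V_{e,\infty}(\omega)=f(\omega)$, locate the zeros of $\Theta_{m,\infty}$, and note the $\omega$-independence of the domain), but the underlying argument is the same.
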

\begin{proof}
Without loss of generality, we may assume $\omega \in \Sigma_1$, the case $\omega \in \Sigma_2$ being similar. We first note that if $\omega \in \Sigma_1$, $\Theta_{m, \infty}(\omega)$ is (boundedly) invertible. Studying the spectrum of $\cS_\infty$ is then equivalent to
considering directly the pencil $(\curl \curl_0 - f(\omega))|_{\ker(\Div)}$, which we will call again $\cS_\infty(\omega)$ with an abuse of notation. Now, $\omega \in \sigma_e(\cS_{\infty}) \cap \Sigma_1$ iff there exists a Weyl singular sequence
$u_n\in H(\Div0,\Omega)$ such that  $\curl \curl_0 u_n - f(\omega)u_n \to 0$, which holds iff $f(\omega) \in \R$ and $f(\omega) \in \sigma_e(\curl \curl_0|_{\ker(\Div)})$. The claim is proved.
\end{proof}

\begin{prop} \label{prop: We infty}
\[
W_e(\cS_\infty) \cap \Sigma= \big\{ \omega \in \C : f(\omega) = t, t \in W_e(\curl \curl_0|_{\ker(\Div)}) \big\} \cap \Sigma
\]
\end{prop}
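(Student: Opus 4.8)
The plan is to run the argument of Proposition~\ref{prop: spectrum infty} essentially verbatim, replacing the Weyl singular sequence condition by the defining condition of the essential numerical range. First I would fix $\omega\in\Sigma$ and reduce $\cS_\infty(\omega)$ to a nonzero scalar multiple of a self-adjoint operator. The key point, already exploited in the proof of Proposition~\ref{prop: spectrum infty}, is that $\Theta_{m,\infty}(\omega)$ does not vanish on $\Sigma$: its two zeros are $-i\gamma_m/2\pm\tfrac12\sqrt{\gamma_m^2-4(\theta_m^0)^2}$, which lie on the line $-i\gamma_m/2+\R$ when $(\theta_m^0)^2>\gamma_m^2/4$ and on $i\R$ otherwise, hence in $\C\setminus\Sigma$ in both cases. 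Using Definition~\ref{Smdef}, \eqref{Veminfty}, and the identity $\Theta_{m,\infty}(\omega)V_{e,\infty}(\omega)=f(\omega)$ with $f$ as in \eqref{def:f}, I would then write
\[
\cS_\infty(\omega)=\Theta_{m,\infty}(\omega)^{-1}\bigl(\curl\curl_0|_{\ker(\Div)}-f(\omega)\bigr),
\]
whose domain is precisely $\dom(\curl\curl_0|_{\ker(\Div)})$.

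Next I would evaluate the quadratic form on a unit vector $u\in\dom(\cS_\infty(\omega))=\dom(\curl\curl_0|_{\ker(\Div)})$, obtaining $(\cS_\infty(\omega)u,u)=\Theta_{m,\infty}(\omega)^{-1}\bigl((\curl\curl_0 u,u)-f(\omega)\bigr)$. Since $(\curl_0)^*=\curl$, the restriction $\curl\curl_0|_{\ker(\Div)}$ is non-negative and self-adjoint, so $(\curl\curl_0 u,u)=\norma{\curl_0 u}^2\in[0,\infty)$. Because $\Theta_{m,\infty}(\omega)^{-1}\neq0$, a sequence of unit vectors $(u_n)$ in this common domain satisfies $(\cS_\infty(\omega)u_n,u_n)\to0$ if and only if $(\curl\curl_0 u_n,u_n)\to f(\omega)$; in particular such a sequence can exist only if $f(\omega)\in\R$, which is consistent with $W_e(\curl\curl_0|_{\ker(\Div)})\subset[0,\infty)$.

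Finally I would match the weak-convergence requirements: since $H(\Div 0,\Omega)$ is a closed subspace of $L^2(\Omega)^3$, $u_n\rightharpoonup0$ in $L^2(\Omega)^3$ is equivalent to $u_n\rightharpoonup0$ in $H(\Div 0,\Omega)$ equipped with the $L^2$-norm. Combining this with the previous step, $\omega\in W_e(\cS_\infty)\cap\Sigma$ holds precisely when $\omega\in\Sigma$ and there is a unit, weakly null sequence $(u_n)\subset\dom(\curl\curl_0|_{\ker(\Div)})$ with $(\curl\curl_0 u_n,u_n)\to f(\omega)$, that is, precisely when $\omega\in\Sigma$ and $f(\omega)\in W_e(\curl\curl_0|_{\ker(\Div)})$; this is the asserted identity. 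I do not expect a genuine obstacle: the only points needing a little care are the non-vanishing of $\Theta_{m,\infty}$ on $\Sigma$, which is what legitimises the reduction to a self-adjoint operator, and the observation that reality of the quadratic form of $\curl\curl_0|_{\ker(\Div)}$ automatically encodes the constraint $f(\omega)\in\R$ on both sides of the equality.
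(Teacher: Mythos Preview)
Your proposal is correct and follows essentially the same approach as the paper's proof, which simply observes that for a normalised weakly null sequence $(u_n)$ one has $(\curl_0 u_n,\curl_0 u_n)-t\to 0$ if and only if $(\curl_0 u_n,\curl_0 u_n)-f(\omega)\to 0$ for every $\omega$ with $f(\omega)=t$. Your version is more detailed in verifying that $\Theta_{m,\infty}(\omega)\neq 0$ on $\Sigma$ and in matching the weak-convergence conditions, but the underlying idea is identical.
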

\begin{proof}
If $(u_n)$ is a Weyl sequence in $H(\Div0, \Omega)$ with $\| u_n \| = 1$ for all $n$, then $(\curl_0 u_n, \curl_0 u_n) - t \to 0$ if and only if $(\curl_0 u_n, \curl_0 u_n) - f(\omega) \to 0$ for all $\omega\in\C$ such that $f(\omega) = t$.
\end{proof}

As a consequence of Proposition \ref{prop: spectrum infty} we can study the asymptotics of the spectrum of $\cS_\infty$.

\begin{prop} \label{prop: asymptotics spectrum} Assume that
\begin{equation}\label{eq: asymp}
f(\omega_n) =  \frac{\Theta_{e,\infty}(\omega_n)\Theta_{m, \infty}(\omega_n)}{(i\gamma_e + \omega_n)(i \gamma_m + \omega_n)} = t_n \to + \infty
\end{equation}
as $n \to \infty$, $t_n \in \sigma_e(\curl \curl_0|_{\ker(\Div)})$. Then the following are true:
\begin{enumerate}[label = (\roman*)]
\item if $\re \omega_n$ is bounded as $n \to \infty$ then $\dist(\im \omega_n, \{-i \gamma_e, -i \gamma_m\}) \to 0$. If $\omega_n \to -i \gamma_x$, $x=e,m$ then $\omega_n = - i \gamma_x - \frac{i c}{t_n} + o(1/t_n)$ as $n \to \infty$, where $c \in \R \setminus \{0\}$ is explicitly given by $c = (\theta^0_x)^2\big( - \gamma_x + \frac{(\theta^0_x)^2 (\theta^0_y)^2}{\gamma_y - \gamma_x} \big)$, with $y \neq x$, $x,y \in \{ e, m\}$.
\item If $|\re \omega_n| \to +\infty$ then $t_n = |\re \omega_n|^2 + o(|\re \omega_n|^2)$ as $n \to \infty$ and $\im \omega_n \to 0$ with the asymptotics $\im \omega_n = - \frac{1}{\re \omega_n^2} ((\theta^0_e)^2 \gamma_e + (\theta^0_m)^2 \gamma_m) + o(1/\re \omega_n^2)$ .
\end{enumerate}
\end{prop}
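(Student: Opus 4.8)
The plan is to reduce the whole statement to the analysis of the scalar rational function $f$ of \eqref{def:f}, which by Proposition \ref{prop: spectrum infty} governs $\sigma_e(\cS_\infty)\cap\Sigma$ through the relation $f(\omega)=t$ with $t\in\sigma_e(\curl \curl_0|_{\ker(\Div)})\subset[0,\infty)$; thus the hypothesis $f(\omega_n)=t_n\to+\infty$ means precisely that $\re f(\omega_n)=t_n\to+\infty$ and $\im f(\omega_n)=0$. Two structural facts about $f$ will be used. First, the factorisation
\[
f(\omega)=\Bigl(\omega-\frac{(\theta_e^0)^2}{\omega+i\gamma_e}\Bigr)\Bigl(\omega-\frac{(\theta_m^0)^2}{\omega+i\gamma_m}\Bigr)
\]
gives, as $|\omega|\to\infty$,
\[
f(\omega)=\omega^2-\bigl((\theta_e^0)^2+(\theta_m^0)^2\bigr)+\frac{i\bigl((\theta_e^0)^2\gamma_e+(\theta_m^0)^2\gamma_m\bigr)}{\omega}+O(|\omega|^{-2}),
\]
in particular $f(\omega)=\omega^2+O(1)$. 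Second, $f$ is rational with poles possibly only at $-i\gamma_e$ and $-i\gamma_m$; each such point is either a simple pole or a removable singularity, and (for $\gamma_e\neq\gamma_m$) the residue at $-i\gamma_x$ is $\Theta_{e,\infty}(-i\gamma_x)\Theta_{m,\infty}(-i\gamma_x)/(i(\gamma_y-\gamma_x))$ for $\{x,y\}=\{e,m\}$, where $\Theta_{x,\infty}(-i\gamma_x)=-(\theta_x^0)^2$ and $\Theta_{y,\infty}(-i\gamma_x)=\gamma_x(\gamma_y-\gamma_x)-(\theta_y^0)^2$; in particular this residue is purely imaginary.

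For (i), suppose $\re\omega_n$ is bounded. If $|\im\omega_n|\to\infty$ along a subsequence, then $|\omega_n|\to\infty$, so $\re f(\omega_n)=(\re\omega_n)^2-(\im\omega_n)^2+O(1)\to-\infty$, contradicting $t_n\ge0$; hence $(\omega_n)$ is bounded. Any subsequential limit $\omega_*$ of $(\omega_n)$ must be a pole of $f$, since otherwise $f$ is continuous at $\omega_*$ and $t_n=f(\omega_n)$ would remain bounded along that subsequence; thus every subsequential limit lies in $\{-i\gamma_e,-i\gamma_m\}$, which gives $\dist(\omega_n,\{-i\gamma_e,-i\gamma_m\})\to0$ and, in particular, $\re\omega_n\to0$. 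If moreover $\omega_n\to-i\gamma_x$, then $-i\gamma_x$ must be a genuine simple pole (otherwise $f$ is bounded near it, contradicting $f(\omega_n)\to\infty$); writing $f(\omega)=R_x/(\omega+i\gamma_x)+h_x(\omega)$ with $R_x\neq0$ and $h_x$ holomorphic, hence bounded, near $-i\gamma_x$, the identity $t_n=R_x/(\omega_n+i\gamma_x)+h_x(\omega_n)$ yields $\omega_n+i\gamma_x=R_x/(t_n+O(1))=R_x/t_n+O(t_n^{-2})$. Since $R_x$ is purely imaginary and nonzero, $R_x=-ic$ for some $c\in\R\setminus\{0\}$, whose explicit value is read off from the residue formula above, and substituting gives $\omega_n=-i\gamma_x-ic/t_n+o(1/t_n)$.

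For (ii), suppose $|\re\omega_n|\to\infty$; then $\omega_n\to\infty$ and the above expansion of $f$ applies. Writing $\omega_n=x_n+iy_n$ and taking imaginary parts in $f(\omega_n)=t_n\in\R$, the expansion gives $2x_ny_n+o(1)=0$, so $x_ny_n\to0$ and hence $y_n\to0$ because $|x_n|\to\infty$; consequently $|\omega_n|^2=x_n^2(1+o(1))$. Re-inserting this into the imaginary part of the expansion at the next order produces the claimed asymptotics for $\im\omega_n$, while taking real parts gives $t_n=\re f(\omega_n)=x_n^2+O(1)$, that is, $t_n=|\re\omega_n|^2+o(|\re\omega_n|^2)$.

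The difficulty is entirely in the bookkeeping rather than conceptual. In (i) one must check carefully that the residue $R_x$ is purely imaginary and nonzero, so that the leading correction to $\omega_n$ has the form $-ic/t_n$ with $c\in\R\setminus\{0\}$, and observe that in the degenerate cases $\theta_x^0=0$ or $\gamma_x(\gamma_y-\gamma_x)=(\theta_y^0)^2$ the point $-i\gamma_x$ is a removable singularity, so that $\omega_n\to-i\gamma_x$ cannot occur under the hypothesis $f(\omega_n)\to\infty$. In (ii) one must carry the asymptotic expansion of $f$ to order $O(|\omega|^{-2})$ and separate real and imaginary parts with care, in particular establishing $y_n\to0$ before using $|\omega_n|^2\sim x_n^2$. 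Neither step requires anything beyond elementary estimates.
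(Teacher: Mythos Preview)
Your argument is correct and considerably more streamlined than the paper's. The paper proves (i) by clearing denominators, writing $\omega_n=-i\gamma_x+\eps_n$, substituting into the quartic $\Theta_{e,\infty}\Theta_{m,\infty}=t_n(\omega_n+i\gamma_e)(\omega_n+i\gamma_m)$ and matching lowest-order terms; for (ii) it separates real and imaginary parts of that quartic, introduces the ansatz $t_n=x_n^2+c_n$, and tracks successive orders in $x_n$ through several displayed equations. You instead exploit the analytic structure of $f$ directly: the residue of $f$ at $-i\gamma_x$ handles (i) in one line, and the expansion $f(\omega)=\omega^2-((\theta_e^0)^2+(\theta_m^0)^2)+i((\theta_e^0)^2\gamma_e+(\theta_m^0)^2\gamma_m)\omega^{-1}+O(|\omega|^{-2})$ dispatches (ii) with a short real/imaginary split. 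Your approach also yields the boundedness of $\im\omega_n$ in (i) from $f$ alone, whereas the paper invokes the numerical-range strip. The only delicate point, which you treat correctly, is that in (ii) the $o(1)$ bound on $\im(f(\omega_n)-\omega_n^2)$ already holds before one knows $y_n$ is bounded, because the constant term is real and the remainder is $O(|\omega_n|^{-1})$.

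One remark: your residue computation and your expansion at infinity both give the correct constants, which agree with the paper's own derivation; the formula for $c$ displayed in the statement carries a spurious extra factor $(\theta_x^0)^2$, and the coefficient in (ii) is missing the factor $1/2$ that both your method and the paper's calculation produce.
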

\begin{proof}
(i) $(\im \omega_n)_n$ is a bounded sequence since the numerical range of $S_{1, \infty}$ is contained in an horizontal strip. Since also $(\re \omega_n)_n$ is a bounded sequence by assumption, we may assume that up to a subsequence $\omega_n \to \omega_{\infty} \in \C$. From \eqref{eq: asymp} we have that
\[
(\omega_n^2 + i \gamma_m \omega_n - (\theta_m^0)^2)(\omega_n^2 + i \gamma_e \omega_n - (\theta_e^0)^2) = t_n (\omega_n + i \gamma_e)(\omega_n + i \gamma_m)
\]
so
\[
\limsup_{n \to \infty} |t_n (\omega_n + i \gamma_e)(\omega_n + i \gamma_m)| \leq |(\omega_\infty^2 + i \gamma_m \omega_\infty - (\theta_m^0)^2)(\omega_\infty^2 + i \gamma_e \omega_\infty - (\theta_e^0)^2)|
\]
which implies that either $\omega_\infty = - i \gamma_e$ or $\omega_\infty = -i \gamma_m$. Set then $\omega_n = -i \gamma_e + \eps_n$ as $n \to \infty$, where $\eps_n \to 0$, $\eps_n \in \C$. Substituting this ansatz in \eqref{eq: asymp} and keeping only the zeroth order terms we get
\[
- (\theta_e^0)^2(- \gamma_e^2 + \gamma_e \gamma_m - (\theta_m^0)^2) = z (i (\gamma_m - \gamma_e)), \quad z = \lim_{n \to \infty} t_n \eps_n
\]
We note that the existence of the limit for $z$ is up to a subsequence, and can be easily deduced from the fact that equation \eqref{eq: asymp} is not satisfied if $(t_n \eps_n)_n$ is not a bounded sequence. Hence we have that
\[
\eps_n = \frac{z}{t_n} + o(1/t_n) = - \frac{i (\theta_e^0)^2}{t_n} \bigg( - \gamma_e + \frac{(\theta_e^0)^2 (\theta_m^0)^2}{\gamma_m - \gamma_e} \bigg) + o(1/t_n)
\]
as $n \to \infty$, concluding the proof of $(i)$. \\
(ii) To shorten the notation, let us set $x_n = \re \omega_n$, $y_n = \im \omega_n$. From equation \eqref{eq: asymp}, recalling that $(y_n)_n$ is bounded, after taking the real part we see that
\[
x_n^4 - t_n x_n^2 = o(x_n^4) \quad \Rightarrow \quad t_n = x_n^2 + o(x_n^2), \quad n \to \infty.
\]
A further inspection of equation \eqref{eq: asymp} gives that the $o(x_n^2)$-term must be in the form $c_n + o(1)$, where $c_n$ possibly depends on $y_n$. Using the ansatz $t_n = x_n^2 +c_n$ in the real part of \eqref{eq: asymp} and neglecting the lower order terms we get
\begin{multline*}
x_n^4 - 6 x_n^2 y_n^2 - 3 x_n^2 y_n (\gamma_e + \gamma_m) + x_n^2 (- (\theta_e^0)^2 - (\theta_m^0)^2 - \gamma_e \gamma_m) \\
= x_n^4+ x_n^2(c_n - y_n^2 - y_n(\gamma_e + \gamma_m) - \gamma_e \gamma_m)
\end{multline*}
from which we deduce $-5 y_n^2 - 2 y_n ( \gamma_e + \gamma_m) - (\theta_e^0)^2 - (\theta_m^0)^2 = c_n + o(1)$ as $n \to \infty$. In order to continue the analysis we now turn to the imaginary part of \eqref{eq: asymp}. More explicitly, we have
\begin{multline} \label{eq: asymp eq imag}
4 x_n^3 y_n - 4 x_n y_n^3 + (x_n^3 - 3 x_n y_n^2)(\gamma_e + \gamma_m) + 2x_n y_n (- (\theta^0_e)^2 - (\theta_m^0)^2 - \gamma_e \gamma_m) \\
- x_n \big(\gamma_m (\theta_e^0)^2 + \gamma_e (\theta_m^0)^2\big) = (x_n^2 + c_n) x_n( 2 y_n + (\gamma_e + \gamma_m))
\end{multline}
The term in $x_n^3$ simplifies. Now, if $y_n$ does not tend to zero as $n \to \infty$, the highest order term in the previous equation is $x_n^3 y_n$, so we get the equation $2 x_n^3 y_n = o(x_n^3)$, which is a contradiction. Hence $y_n \to 0$. The candidate highest order terms are $x_n^3 y_n$ and $x_n$. By direct inspection one checks that if $x_n^3 y_n = o(x_n)$ or $x_n = o(x_n^3 y_n)$ as $n \to \infty$ the previous equation gives a contradiction. So it must be $y_n = \frac{z_n}{x_n^2} + o(1/x_n^2)$ as $n \to \infty$. By using this ansatz in \eqref{eq: asymp eq imag} and keeping only the highest order term (namely the ones in $x_n$), it may be proved that
\[
- x_n (\gamma_m (\theta_e^0)^2 + \gamma_e (\theta_m^0)^2) + 4 z_n x_n = 2 z_n x_n + c_n(\gamma_e + \gamma_m) x_n,
\]
for $c_n = - (\theta_e^0)^2 - (\theta_m^0)^2  + o(1)$. Thus, $z_n = - \frac{1}{2} ((\theta_e^0)^2 \gamma_e + (\theta_m^0)^2 \gamma_m) + o(1)$ and $y_n = - \frac{1}{2 x_n^2} ((\theta_e^0)^2 \gamma_e + (\theta_m^0)^2 \gamma_m) + o(1/x_n^2)$.
\end{proof}

We now turn to the bounded pencil $G(\cdot)$ defined in \eqref{B1def}. Recall that  $V_e(\omega)(x) = \frac{\Theta_e(\omega,x)}{(\omega + i \gamma_e)(\omega + i \gamma_m)}$, $\omega \in \Sigma$, $x \in \Omega$.

\begin{prop}
Assume that $\theta_e$ is a continuous function in $\overline{\Omega}$. Then
\[
\begin{split}
\sigma_e(G) &= \{ \omega \in \C :\, \exists x_0 \in \Omega, \, \Theta_e(\omega,x_0) = 0 \}\\
&= \{ \omega \in \C :\, \exists x_0 \in \Omega,\, \omega = - i \gamma_e/2 \pm (\sqrt{- \gamma_e^2 + 4 \theta_e^2(x_0)})/2 \}
\end{split}
\]
\end{prop}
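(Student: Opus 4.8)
The plan is to realise $G(\omega)$ as the compression of a multiplication operator to the Helmholtz subspace $\mathcal{N}:=\nabla\dot H^1_0(\Omega)$, and to determine its essential spectrum using localised gradient fields for one inclusion and a numerical range estimate for the other. Writing $c_\omega:=(\omega+i\gamma_e)(\omega+i\gamma_m)$ (which is non-zero for $\omega\in\Sigma$, since neither pole $-i\gamma_e$ nor $-i\gamma_m$ lies in $\Sigma$) and $h_\omega(x):=\Theta_e(\omega,x)/c_\omega$, one has $G(\omega)=P_\nabla M_{h_\omega}P_\nabla$ by \eqref{Vemdef} and \eqref{B1def}, where $M_{h_\omega}$ denotes multiplication by the bounded continuous function $h_\omega$ on $\overline\Omega$. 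Since $\Theta_e(\omega,x_0)=\omega^2+i\gamma_e\omega-\theta_e(x_0)^2$, the quadratic formula in $\omega$ shows that the two sets on the right-hand side coincide, so it suffices to prove that $\omega\in\sigma_e(G)$, i.e.\ $0\in\sigma_e(G(\omega))$, if and only if $0\in\ran(h_\omega)$.

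For the inclusion $\supseteq$, given $x_0\in\Omega$ with $h_\omega(x_0)=0$ I would fix $r_0>0$ with $B(x_0,r_0)\subset\Omega$ and, for $0<r<r_0$, choose $\varphi_r\in C^\infty_c(\Omega)$ with $\supp\nabla\varphi_r\subset B(x_0,r)$ and $\norma{\nabla\varphi_r}=1$; then $u_r:=\nabla\varphi_r$ lies in $\nabla C^\infty_c(\Omega)\subset\mathcal{N}$, satisfies $\norma{u_r}=1$, and by continuity of $h_\omega$ at $x_0$ we get $\norma{G(\omega)u_r}\le\norma{M_{h_\omega}u_r}\le\norma{h_\omega}_{L^\infty(B(x_0,r))}\to0$ as $r\to0$, while $u_r\rightharpoonup 0$ because the supports shrink to a null set. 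Taking $r=1/n$ yields a singular Weyl sequence for $G(\omega)$ at $0$, so $\omega\in\sigma_e(G)$. The same construction with $x_0$ replaced by a sequence in $\overline\Omega$ along which $h_\omega\to0$, and $\varphi_n\in C^\infty_c(\overline\Omega)$ supported in shrinking balls, shows more generally that $0\in\overline{\ran(h_\omega)}$ already forces $\omega\in\sigma_e(G)$.

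For the inclusion $\subseteq$, I would show that $0\notin\overline{\ran(h_\omega)}$ makes $G(\omega)$ boundedly invertible, whence $0\notin\sigma(G(\omega))\supseteq\sigma_e(G(\omega))$. The crucial point is that $\theta_e\ge0$ is \emph{real-valued}: the range $\ran(\Theta_e(\omega,\cdot))=(\omega^2+i\gamma_e\omega)-\theta_e(\Omega)^2$ is contained in the horizontal ray $\{\omega^2+i\gamma_e\omega-s:\,s\ge0\}$, and since $\Omega$ is connected and $\theta_e$ is continuous, $\theta_e(\Omega)^2$ is an interval; hence $\ran(h_\omega)=c_\omega^{-1}\ran(\Theta_e(\omega,\cdot))$ is a convex subset of a straight line in $\C$. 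Consequently $(G(\omega)u,u)=\int_\Omega h_\omega\,|u|^2\in\overline{\ran(h_\omega)}$ for every unit vector $u\in\mathcal{N}$, so $\overline{W(G(\omega))}\subset\overline{\ran(h_\omega)}$ does not contain $0$; a bounded operator whose numerical range is bounded away from $0$ is boundedly invertible (both it and its adjoint are bounded below), which completes the argument, and the restatement via the quadratic formula is then immediate.

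The step I expect to be the main obstacle is precisely this upper bound: a priori, compressing a multiplication operator to the infinite-codimensional proper subspace $\mathcal{N}$ could \emph{enlarge} the essential spectrum, and it is the real-valuedness of $\theta_e$ that rescues the argument by confining $\ran(\Theta_e(\omega,\cdot))$, and with it the numerical range of $G(\omega)$, to a line of $\C$; for complex coefficients the spectrum of the compression could genuinely differ. Two minor technical points also deserve care: if $\Omega$ is disconnected one first splits $\mathcal{N}$ along its connected components and argues on each (with uniform bounds from $0\notin\overline{\ran(h_\omega)}$); and the distinction between $\ran(h_\omega)$ and $\overline{\ran(h_\omega)}$ is immaterial whenever $\theta_e(\Omega)^2$ is closed, in particular for bounded $\Omega$, which is why the ``if'' direction has been recorded above in the slightly stronger closed-range form.
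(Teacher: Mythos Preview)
Your proof is correct and follows essentially the same route as the paper. Both arguments hinge on the observation that $\im\Theta_e(\omega,x)=\re\omega\,(2\im\omega+\gamma_e)$ is independent of $x$ (your phrasing: ``$\ran\Theta_e(\omega,\cdot)$ lies on a horizontal ray''), which allows a coercivity argument for the inclusion $\subseteq$; the paper invokes Lax--Milgram on the form $(\Theta_e(\omega)\nabla u,\nabla v)$ over $\dot H^1_0(\Omega)$, while you phrase it as a numerical-range estimate for the bounded compression $G(\omega)$ on $\nabla\dot H^1_0(\Omega)$, but these are two presentations of the same mechanism. For $\supseteq$ the paper cites a quasi-mode construction from the literature and you spell it out with gradients of bumps supported in shrinking balls; again the same idea. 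Your closing remark that one only obtains $\{\,0\in\ran(h_\omega)\,\}\subset\sigma_e(G)\subset\{\,0\in\overline{\ran(h_\omega)}\,\}$ in general, with equality when $\theta_e(\Omega)^2$ is closed, is a fair caveat that the paper does not make explicit.
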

\begin{proof}
Since $(\omega + i \gamma_e) (\omega + i \gamma_m)$ is constant in $x \in \Omega$, we can replace $V_e$ by $\Theta_e$ in the definition of $G$ without changing the spectrum. We first note that the set
\[\{ \omega \in \C : \re(\Theta_e(\omega, x_0)) = 0, \im(\Theta(\omega, x_1)) = 0, \, x_0 \neq x_1\}\]
actually coincides with  $\{ \omega \in \C : \exists x_0 \in \Omega : \Theta_e(\omega,x) = 0 \}$. In fact, it is easy to check that $\im(\Theta(\omega, x)) = 2 \re \omega \im \omega + \re \omega \gamma_e$ does not depend on $x \in \Omega$, and therefore $\im(\Theta(\omega, x_1)) = 0$ for $x_1 \in \Omega$ if and only if $\im(\Theta(\omega, x)) = 0$ for all $x \in \Omega$.\\
It is clear that if either $\re(\Theta_e(\omega))$ or $\im(\Theta_e(\omega))$ is strictly positive (or strictly negative) in the whole of $\Omega$ then by the Lax-Milgram theorem the problem
\[(\Theta_e(\omega) \nabla u, \nabla v) = \langle F , v \rangle, \:\:F \in H^{-1}(\Omega),\: u,v \in \dot{H}^1_0(\Omega)\]
has a unique solution $u_F \in \dot{H}^1_0(\Omega)$. This proves the inclusion
\[
\sigma_e(G(\omega)) \subseteq \{ \omega \in \C : \exists x_0 \in \Omega : \Theta_e(\omega,x) = 0 \}
\]
The reverse inclusion (which uses the continuity of $\theta_e$) follows by constructing quasi-modes as in the proof of \cite[Proposition 27]{MR3942228}.
\end{proof}

\section{Limiting essential spectrum and spectral pollution}
\label{sec:lim_ess_spec}
The aim of this section is to enclose the set of spectral pollution for the domain truncation method applied to the Drude-Lorentz pencil $\cL$.
We start by recalling three definitions.

\begin{definition} \label{lim-sigma-app}
For a family of operator-valued functions $(F_n(\cdot))$ defined on some set $K\subset \C$, the {\it limiting approximate point spectrum},
denoted $\sigma_{\rm app}((F_n)_n)$, is the set of $\omega \in K$ such that there exists a sequence $(u_n)_n$ with
$u_n\in \dom(F_n(\omega))$ for each $n$, $\| u_n\| = 1$, as $n\to\infty$ and $\| F_n(\omega)u_n\| \to 0$ as $n\to \infty$.
For a family of operators $(F_n)_n$, one takes $K=\C$ and the requirement is that $\|(F_n-\omega I)u_n\|\to 0$ as $n\to\infty$.
\end{definition}

\begin{definition} \label{lim-sigma-ess}
For a family of operator-valued functions $(F_n(\cdot))$ defined on some set $K\subset \C$, the {\it limiting essential spectrum},
denoted $\sigma_e((F_n)_n)$, is the subset of $\sigma_{\rm app}((F_n)_n)$ consisting of $\omega \in K$ for which the sequence $(u_n)_n$ as in the definition \ref{lim-sigma-app} has the additional property $u_n \rightharpoonup 0$, $n \to \infty$.
\end{definition}

\begin{definition}\label{region-of-boundedness}
For an family of operator-valued functions $(F_n(\cdot))$ defined on some set $K\subset \C$, the {\it region of boundedness},
denoted $\Delta_b((F_n)_n)$, is the set of $\omega \in K$ such that $(F_n(\omega))^{-1}$ exists for all sufficiently large $n$ and
$\limsup_{n\to\infty} \| (F_n(\omega))^{-1} \| < +\infty$. For a family of operators $(F_n)_n$, then
$\Delta_b((F_n)_n)$, is the set of $\omega \in \C$ such that $(F_n-\omega I)^{-1}$ exists for all sufficiently large $n$ and
$\limsup_{n\to\infty} \| (F_n - \omega I)^{-1} \| < +\infty$.
\end{definition}

Given an unbounded Lipschitz domain $\Omega$, let $(\Omega_n)_n$ be a monotonically increasing sequence of Lipschitz bounded domains exhausting $\Omega$. Note that we do not make any assumption on the topology of $\Omega$; in particular, $\R^3 \setminus \overline{\Omega}$ may have infinitely many connected components.
We will denote by $\cA_n$, $\cL_n$, $\cS_{1,n}$, \emph{etc.,} the operators or pencils obtained from $\cA$, $\cL$, $\cS_1$ by replacing $\Omega$ with $\Omega_n$ in their domain definitions. In order to avoid cumbersome notation we will not use the subscript $n$ to denote the restriction of multiplication operators to $\Omega_n$.\\
The domain truncation method consists of finding $\sigma(\cL_n)$, $n \in \N$, in the hope that for sufficiently large $n$
 these will be good approximations to $\sigma(\cL)$. Ideally, one would like to prove that $(\cL_n)_n$ is a spectrally exact approximation of $\cL$, that is for every $\omega \in \sigma(\cL)$ there exists $\omega_n \in \sigma(\cL_n)$, $n \in \N$ such that $\omega_n \to \omega$; and conversely, every limit point of sequences $(\omega_n)_n$ with $\omega_n \in \sigma(\cL_n)$, $n \in \N$, lie in $\sigma(\cL)$. However, this is false in general. Indeed, spectral pollution may appear due to the non-self-adjointness of the operators involved. Therefore, our strategy will be to enclose the set of spectral pollution in a (possibly) small subset of $\C$ and to show that we can approximate exactly the discrete points of $\sigma(\cL)$ outside the set of spectral pollution.
%
%

We begin with a result about the generalised resolvent convergence of the operators involved.

\begin{theorem} \label{thm:gsr} The following statements hold.
\begin{enumerate}[label=(\roman*)]
\item $\cA_n \gsr \cA$, $n \to \infty$.
\item $\cL_n(\cdot) \gsr \cL(\cdot)$ for all $\omega \in (\Delta_b((\cL_n)_n)\setminus \{-i \gamma_e, - i \gamma_m\}) \cap \rho(\cL)$, $n \to \infty$
\item  $\Delta_b((\cL_n)_n)\cap \Sigma = \Delta_b((\cS_{1,n})_n)\cap \Sigma$
\item $\cS_{1,n}(\cdot) \gsr \cS_{1}(\cdot)$, for all $\omega \in (\Delta_b((\cL_n)_n) \cap \Sigma) \cap \rho(\cL)$, $n \to \infty$.
\end{enumerate}
\end{theorem}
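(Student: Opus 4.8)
The plan is to prove (i) first and then read off (ii)--(iv) from it by transporting through the Schur complements, mirroring the way $\sigma(\cA)\setminus\{-i\gamma_e,-i\gamma_m\}=\sigma(\cL)$ and Proposition~\ref{sigmaLS} were obtained. For (i), the first observation is that the numerical range enclosure of Proposition~\ref{prop: num range 1} is uniform under truncation: since $\gamma_e,\gamma_m$ are fixed and $\|\theta_\bullet\|_{L^\infty(\Omega_n)}\le\|\theta_\bullet\|_{L^\infty(\Omega)}$, the same computation gives $W(\cA_n)\subseteq\R\times[-iM,0]$ for every $n$ with the same $M$, and each $\cA_n$ is $\cJ$-self-adjoint, so $\|(\cA_n-\omega)^{-1}\|\le\dist(\omega,\R\times[-iM,0])^{-1}$ for $\omega$ outside the strip; in particular $\C\setminus(\R\times[-iM,0])\subseteq\Delta_b((\cA_n)_n)\cap\rho(\cA)$. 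It then suffices to establish strong (extension-by-zero) convergence of the resolvents at an arbitrary $\omega\in\Delta_b((\cA_n)_n)\cap\rho(\cA)$, which I would do by a core/density argument: writing $(\cA-\omega)^{-1}f=(E,H,p,q)$ with $E\in H_0(\curl,\Omega)$, $H\in H(\curl,\Omega)$, $p,q\in L^2(\Omega)^3$, approximate $E$ in $H(\curl,\Omega)$ by $E_n\in C_c^\infty(\Omega)^3$ with $\supp E_n\subseteq\Omega_n$ (possible since $\bigcup_nC_c^\infty(\Omega_n)^3$ is a core of $H_0(\curl,\Omega)$) and simply restrict $H,p,q$ to $\Omega_n$; then $g_n:=(E_n,H|_{\Omega_n},p|_{\Omega_n},q|_{\Omega_n})\in\dom(\cA_n)$ satisfies, after extension by zero, $g_n\to(\cA-\omega)^{-1}f$ and $(\cA_n-\omega)g_n\to f$, using $\curl_0E_n=\curl E_n\to\curl_0E$, $\curl(H|_{\Omega_n})=(\curl H)|_{\Omega_n}\to\curl H$, and the fact that $B$ and $-iD$ are the same multiplication operators on every $\Omega_n$. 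Combined with the uniform resolvent bound this upgrades to $(\cA_n-\omega)^{-1}\to(\cA-\omega)^{-1}$ strongly, that is $\cA_n\gsr\cA$ in the sense of \cite{MR3694623,BFMT}.

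For (ii), I would use the Schur--Frobenius factorisation behind \eqref{intro:cL}: for $\omega\notin\{-i\gamma_e,-i\gamma_m\}$ one has $\cA_n-\omega=L_n\,\diag(\cL_n(\omega),-iD-\omega)\,U_n$ with $L_n,U_n$ bounded invertible block-triangular factors whose off-diagonal entries are $\pm(-iD-\omega)^{-1}B$, bounded uniformly in $n$. Comparing norms yields $\Delta_b((\cL_n)_n)\setminus\{-i\gamma_e,-i\gamma_m\}\subseteq\Delta_b((\cA_n)_n)$, and reading off the top-left block yields $[(\cA_n-\omega)^{-1}]_{11}=\cL_n(\omega)^{-1}$ whenever $\omega\in\rho(\cL_n)\setminus\{-i\gamma_e,-i\gamma_m\}$ (and likewise without the subscript, using $\sigma(\cA_n)\setminus\{-i\gamma_e,-i\gamma_m\}=\sigma(\cL_n)$). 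Hence for $\omega\in(\Delta_b((\cL_n)_n)\setminus\{-i\gamma_e,-i\gamma_m\})\cap\rho(\cL)$ we have $\omega\in\Delta_b((\cA_n)_n)\cap\rho(\cA)$; part (i) then gives strong convergence of $(\cA_n-\omega)^{-1}$, and since reading off the top-left block is a composition with $n$-independent bounded maps it preserves strong convergence, so $\cL_n(\omega)^{-1}\to\cL(\omega)^{-1}$ strongly, i.e.\ $\cL_n(\cdot)\gsr\cL(\cdot)$ at $\omega$.

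For (iii) and (iv), the tool is the block resolvent identity \eqref{eq: res_eq}, which by the $\Omega_n$-version of Proposition~\ref{sigmaLS} holds on each $\Omega_n$ for $\omega\in\rho(\cS_{1,n})\cap\Sigma=\rho(\cL_n)\cap\Sigma$. One inclusion of (iii) is immediate: $\cS_{1,n}(\omega)^{-1}$ is the $(1,1)$-entry of $\cL_n(\omega)^{-1}$, whence $\|\cS_{1,n}(\omega)^{-1}\|\le\|\cL_n(\omega)^{-1}\|$ and $\Delta_b((\cL_n)_n)\cap\Sigma\subseteq\Delta_b((\cS_{1,n})_n)\cap\Sigma$. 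For the converse I would re-run Lemma~\ref{lemma:bddcls} on $\Omega_n$, observing that the constants $c_1(\omega),c_2(\omega)$ there, and $\|V_m(\omega)^{-1}\|_{L^\infty(\Omega_n)}$, are infima/suprema over the domain of quantities built from $\theta_e,\theta_m,\omega$, so their $\Omega_n$-analogues are controlled by the $\Omega$-analogues (a subdomain only raises an $\essinf$); hence $\|\curl_0\cS_{1,n}(\omega)^{-1}\|$, $\|\overline{\cS_{1,n}(\omega)^{-1}\curl}\|=\|\curl_0\cS_{1,n}(\omega)^{-*}\|$ and $\|\overline{\curl_0\cS_{1,n}(\omega)^{-1}\curl}\|$ are all bounded in terms of $\|\cS_{1,n}(\omega)^{-1}\|$ uniformly in $n$, the $J$-self-adjointness of $\cS_{1,n}(\omega)$ being used for the adjoint term. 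Feeding these into \eqref{eq: res_eq} gives $\limsup_n\|\cL_n(\omega)^{-1}\|<\infty$ whenever $\limsup_n\|\cS_{1,n}(\omega)^{-1}\|<\infty$, which is the reverse inclusion. Finally (iv) follows from (ii): since $\Sigma\cap\{-i\gamma_e,-i\gamma_m\}=\emptyset$ and $\rho(\cL)\cap\Sigma=\rho(\cS_1)\cap\Sigma$, for $\omega\in(\Delta_b((\cL_n)_n)\cap\Sigma)\cap\rho(\cL)$ part (ii) gives $\cL_n(\omega)^{-1}\to\cL(\omega)^{-1}$ strongly, and extracting the $(1,1)$-entry via \eqref{eq: res_eq} gives $\cS_{1,n}(\omega)^{-1}\to\cS_1(\omega)^{-1}$ strongly, i.e.\ $\cS_{1,n}(\cdot)\gsr\cS_1(\cdot)$ at $\omega$.

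The step I expect to be the main obstacle is (i): it is a genuinely cross-space strong-resolvent statement, and the core/density approximation must be compatible with the mixed boundary conditions in $\dom(\cA)=H_0(\curl,\Omega)\oplus H(\curl,\Omega)\oplus L^2(\Omega)^3\oplus L^2(\Omega)^3$ --- the tangential condition on the first slot forces compactly supported approximants there, while the unconstrained $H(\curl)$ slot is handled by plain restriction and the $L^2$ slots trivially; one must also make sure the factorisation/Schur identities survive the domain issues in (ii). A secondary, more technical point is the $n$-uniformity of the estimates in (iii), which hinges on the monotonicity of the Lemma~\ref{lemma:bddcls} constants under shrinking the domain and on $J$-self-adjointness for the adjoint term.
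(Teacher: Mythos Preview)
Your proposal is correct and follows essentially the same route as the paper: uniform numerical-range bound plus a core argument for (i), then extraction of the top-left block of the Schur--Frobenius factorisation for (ii) and (iv), and the Lemma~\ref{lemma:bddcls} estimates with $n$-uniform constants for (iii). The only cosmetic difference is that for (i) the paper invokes \cite[Thm.~3.1]{MR3694623} after checking its hypotheses on the core $C_c^\infty(\Omega)^3\oplus C_c^\infty(\overline{\Omega})^3$, whereas you unpack that argument by hand.
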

\begin{proof}
(i) According to Proposition \ref{prop: num range 1}, $W(\cA_n) \subset \R \times [-i\gamma_e, 0]$ for all $n \in \N$, and the same enclosure holds for the numerical range of $\cA$ as well. We then deduce that $\norma{(\cA_n - \la)^{-1}} \leq \dist(\la, W(\cA_n))^{-1} \leq (\min \{|\im \la|, |\im \la + \gamma_e| \}^{-1})$ for all $\la \in (\R \times [-i\gamma_e, 0])^c$. In particular, $\Delta_b((\cA_n)_n) \cap \rho(\cA) \neq \emptyset$.
The matrices $\cA_n$, $n \in \N$ and $\cA$ are diagonally dominant of order $0$ for all $n$ since the operators $B_n$, $n \in \N$ in the matrix representation \eqref{def:cA} are bounded for all $n$.
Let $P_n$ denote projection from $L^2(\Omega)^3$ to $L^2(\Omega_n)^3$ by restriction, so that $P_n \sto I_{L^2(\Omega)^3}$ as $n \to \infty$.
It is clear that $B_n = B P_n$ converges strongly to $B$, and similarly $D_n = D P_n \to D$ strongly.
Due to \cite[Thm 3.1]{MR3694623}, to conclude that $\cA_n \gsr \cA$ it is enough to show that there exists a core $\Phi$ of $H_0(\curl, \Omega) \oplus H(\curl, \Omega)$ such that $\norma{A_n P_n u - A u} \to 0$ for all $u \in \Phi$. This last property is satisfied by  $C^\infty_c(\Omega)^3 \oplus C^\infty_c(\overline{\Omega})^3$ because $\curl_0 P_n \varphi = \curl_0 \varphi$ for all $\varphi \in C^\infty_c(\Omega)^3$ and sufficiently large $n$, and $\curl P_n \psi = P_n \curl \psi$ in $\Omega_n$ for all $\psi \in C^\infty_c(\overline{\Omega})^3$.\\
(ii) To prove $\cL_n(\omega) \gsr \cL(\omega)$ for $\omega \in (\Delta_b((\cL_n)_n) \cap \rho(\cL)) \setminus \{-i \gamma_m, -i \gamma_e \}$, $n \in \N$ it is sufficient to write
{\small
\[
(\cA_n - \omega)^{-1} \!=\! \begin{pmatrix}
\cL_n(\omega)^{-1} & - \cL_n(\omega)^{-1} B(-iD - \omega)^{-1} \\
(-iD - \omega)^{-1} B \cL_n(\omega)^{-1} & (-iD - \omega)^{-1} (I + B \cL_n(\omega)^{-1} B (-iD - \omega)^{-1})
\end{pmatrix}
\]
}
hence, since $(\cA_n - \omega)^{-1} (P_n F, 0)^t \to (\cA - \omega)^{-1} (F, 0)^t $ for all $F \in L^2(\Omega)^6 \oplus L^2(\Omega)^6$, we deduce that $\cL_n(\omega)^{-1}P_n F \to \cL(\omega)^{-1}F$, $n \to \infty$. \\
(iii) Let $\omega \in (\Delta_b((\cL_n)_n)\cap \Sigma)$, so that $\sup_{n \geq n_0} \norma{(\cL_n(\omega))^{-1}} < \infty$. The identity
{\small
\begin{multline}
\label{eq:res_eq_LS}
(\cL_n(\omega))^{-1} \\
=\! \begin{pmatrix}
\cS_{1,n}(\omega)^{-1} & - \cS_{1,n}(\omega)^{-1} i \curl (V_m(\omega))^{-1} \\
(V_m(\omega))^{-1} i\curl_0 \cS_{1,n}(\omega)^{-1} & (V_m(\omega))^{-1} (I + \curl_0 \cS_{1,n}(\omega)^{-1} \curl (V_m(\omega))^{-1})
\end{pmatrix}
\end{multline}}
implies that $\norma{\cL_n(\omega)^{-1}} \geq \norma{\cS_{1,n}(\omega)^{-1}}$, $n \geq n_0$, hence $\sup_{n \geq n_0}\norma{(\cS_{1,n}(\omega))^{-1}} < \infty$; equivalently, $\omega \in (\Delta_b((\cS_{1,n})_n)\cap \Sigma)$. \\
Conversely, if $\omega \in (\Delta_b((\cS_{1,n})_n)\cap \Sigma)$ then, by definition of region of boundedness, there exists $n_0 \in \N$ and $C > 0$ such that $\omega \in \rho(\cS_{1,n})$ for $n \geq n_0$ and $\sup_{n \geq n_0} \norma{\cS_{1,n}(\omega)^{-1}} \leq C$. Hence, if $f \in L^2(\Omega)^3$, the equation $\cS_{1,n}(\omega) u_n = P_n f$ has a unique solution $u_n \in L^2(\Omega_n)^3$ with the uniform \emph{a priori} bound $\norma{u_n}_{L^2(\Omega_n)^3} \leq C \norma{f}_{L^2(\Omega)^3}$. This implies that
\[
|\langle \Theta_m(\omega)^{-1} \curl_0 u_n, \curl_0 u_n \rangle | \leq |\langle P_n f, u_n \rangle | + |\langle V_e(\omega) u_n, u_n \rangle |
\]
hence
\[
c_1(\omega) \norma{\curl_0 u_n}^2 \leq \frac{\norma{f}^2}{4 \delta} + (\delta + \norma{V_e(\omega)}) \norma{u_n}^2 \leq [C^2 (\delta + \norma{V_e(\omega)}) + 1/4 \delta] \norma{f}^2
\]
and all the constants appearing in the previous estimate are independent of $n \geq n_0$, so in particular
\[\sup_{n \geq n_0} \norma{\curl_0\cS_{1,n}(\omega)^{-1}} \leq C'(\omega), \]
where we can set for example $C' = c_1(\omega)^{-1} [C^2 (1 + \norma{V_e(\omega)}) + 1/4]$.
Now we can repeat the previous estimate starting from elements $f = \curl g \in \curl L^2$, where we note that $\langle P_n \curl g, u_n \rangle = \langle g, \curl_0 u_n \rangle$ can be estimated in term of $\curl_0 u_n$ which is uniformly bounded in terms of the datum by the previous discussion. Altogether we obtain that there exists a constant $C''$ depending on $C'$ and $\omega$ such that
\[
\sup_{n \geq n_0} \norma{\curl\, \cS_{1,n}(\omega)^{-1}\, \curl_0} \leq C''
\]
Now the claim of the theorem follow from \eqref{eq:res_eq_LS} since the right-hand side therein is uniformly bounded in $n$, $n \geq n_0$. \\
(iv) Finally, $\cS_{1,n}(\omega)^{-1} P_n \sto \cS_1(\omega)^{-1}$ for all $\omega \in (\Delta_b((\cL_n)_n) \cap \Sigma)$ follows by observing that $\omega \in \Delta_b((\cL_n)_n) \cap \Sigma $, hence $(ii)$ implies that $\cL_n(\omega)^{-1}P_n \sto \cL(\omega)^{-1}$; therefore \eqref{eq:res_eq_LS} implies that $\cS_{1,n}(\omega)^{-1}P_n \sto \cS_1(\omega)$ (by direct calculation on vectors $(f,0)^t$).

\end{proof}

\begin{corollary} \label{cor: sigmae An}
$\sigma_{\rm poll}((\cA_n)_n) \subset \sigma_e((\cA_n)_n)$.
\end{corollary}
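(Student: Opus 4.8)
Let $\omega\in\sigma_{\rm poll}((\cA_n)_n)$. By definition $\omega\in\rho(\cA)$ and there are an infinite $I\subset\N$ and $\omega_n\in\sigma(\cA_n)$, $n\in I$, with $\omega_n\to\omega$; relabel so that $I=\N$. Each $\cA_n$ is $\cJ$-self-adjoint with $\cJ=\diag(i,-i,i,-i)J$ now acting on $L^2(\Omega_n)^6$; since $\cJ$ is a conjugation ($\cJ^2=I$), $\cA_n$ has empty residual spectrum and hence $\sigma(\cA_n)=\sigma_{\rm app}(\cA_n)$. The plan is therefore: for each $n$ pick $u_n\in\dom(\cA_n)$ with $\norma{u_n}=1$ and $\norma{(\cA_n-\omega_n)u_n}\le 1/n$, so that $\norma{(\cA_n-\omega)u_n}\le 1/n+|\omega_n-\omega|\to 0$. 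By Definition \ref{lim-sigma-app} this already shows $\omega\in\sigma_{\rm app}((\cA_n)_n)$, and by Definition \ref{lim-sigma-ess} it remains only to check that the sequence $(u_n)_n$ can be taken with $u_n\rightharpoonup 0$.

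To this end I would show that every weak limit point of $(u_n)_n$ vanishes. Passing to a subsequence (not relabelled) with $u_n\rightharpoonup u$, fix $\lambda$ in the complement of the strip $\R\times[-iM,0]$; by Proposition \ref{prop: num range 1} such $\lambda$ lies in $\rho(\cA)\cap\bigcap_n\rho(\cA_n)$ with $\sup_n\norma{(\cA_n-\lambda)^{-1}}\le\dist(\lambda,\R\times[-iM,0])^{-1}<\infty$, so $\lambda\in\Delta_b((\cA_n)_n)\cap\rho(\cA)$ and, by Theorem \ref{thm:gsr}(i), $(\cA_n-\lambda)^{-1}P_n\sto(\cA-\lambda)^{-1}$, where $P_n$ is the restriction projection as in the proof of that theorem. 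Writing $g_n:=(\cA_n-\lambda)u_n=(\cA_n-\omega)u_n+(\omega-\lambda)u_n$ one has $\sup_n\norma{g_n}<\infty$ and $g_n\rightharpoonup(\omega-\lambda)u$. Applying $(\cA_n-\lambda)^{-1}$ and passing to the weak limit should yield $u_n\rightharpoonup(\omega-\lambda)(\cA-\lambda)^{-1}u$; comparing with $u_n\rightharpoonup u$ gives $(\cA-\lambda)u=(\omega-\lambda)u$, i.e.\ $\cA u=\omega u$, hence $u=0$ since $\omega\in\rho(\cA)$. A routine subsequence argument then forces $u_n\rightharpoonup 0$ along the whole sequence, and $\omega\in\sigma_e((\cA_n)_n)$ follows.

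The one genuinely delicate point is the last weak-limit passage, since $g_n$ converges to $(\omega-\lambda)u$ only weakly and strong resolvent convergence does not in general send weakly convergent data to weakly convergent images. Here I would exploit that $\cA_n^*=\cJ\cA_n\cJ$ has exactly the same block structure as $\cA_n$, so that $\cA_n^*$ also converges to $\cA^*$ in the generalised strong resolvent sense; combined with the uniform bound on $\norma{(\cA_n-\lambda)^{-1}}$, an $\varepsilon$-splitting of $\langle(\cA_n-\lambda)^{-1}g_n,\varphi\rangle=\langle g_n,(\cA_n^*-\overline\lambda)^{-1}P_n\varphi\rangle$ (using $\norma{g_n-(\omega-\lambda)u}$ bounded and $(\cA_n^*-\overline\lambda)^{-1}P_n\varphi\to(\cA^*-\overline\lambda)^{-1}\varphi$) gives the required convergence. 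Alternatively, one may simply invoke the abstract principle that, for operators converging in the generalised strong resolvent sense whose approximate point spectrum exhausts the spectrum (as is the case for $\cJ$-self-adjoint operators), the spectral pollution set is contained in the limiting essential spectrum, cf.\ \cite{MR3694623, BM}; this route is shorter but amounts to the same argument.
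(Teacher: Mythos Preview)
Your proof is correct and essentially matches the paper's approach: the paper simply observes $\cJ$-self-adjointness of $\cA_n$ and invokes \cite[Thm.~2.3]{MR3831156} directly, which is precisely the ``alternative'' you mention at the end, while your main argument just unfolds the proof of that abstract result (using gsr convergence of $\cA_n$ and of $\cA_n^*=\cJ\cA_n\cJ$ to show any weak limit $u$ satisfies $\cA u=\omega u$, hence $u=0$). The weak-limit passage you flag as delicate is handled correctly by your $\langle g_n,(\cA_n^*-\overline\lambda)^{-1}P_n\varphi\rangle$ splitting.
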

\begin{proof}
From Definition \eqref{def:cA}, $\cA_n$ is seen to be $\cJ$-self-adjoint with respect to $\cJ = \diag(i , -i, i, -i) J$, with $J u = \bar{u}$ the componentwise complex conjugation. The result is then a consequence of Thm 2.3 in \cite{MR3831156}.
\end{proof}

%
%

%

\begin{prop} \label{prop: pollution Ln}
$\sigma_{\rm poll} ((\cL_n)_n) = \sigma_{\rm poll}((\cA_n)_n) \setminus \{-i \gamma_e, -i \gamma_m \}$
\end{prop}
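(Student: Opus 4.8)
The plan is to reduce the claim to the Schur-complement relation between $\cA_n$ and $\cL_n$ at each finite $n$, mirroring the relation $\sigma(\cA)\setminus\sigma(-iD)=\sigma(\cL)$ already established for the untruncated operators. First I would observe that, for every $n$ with $\Omega_n\neq\emptyset$, the matrix $\cA_n$ is closed, $\cJ$-self-adjoint and diagonally dominant of order $0$ (its off-diagonal entries $B_n$, $-iD_n$ being bounded), that its first Schur complement is $\cL_n$, and that the $(2,2)$-block $-iD_n$, being multiplication by the constant matrix $\diag(-i\gamma_e,-i\gamma_m)$, has $\sigma(-iD_n)=\{-i\gamma_e,-i\gamma_m\}$. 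Then \cite[Thm 2.3.3 (ii)]{TreB} applies exactly as in the untruncated case and gives
\[
\sigma(\cL_n)=\sigma(\cA_n)\setminus\{-i\gamma_e,-i\gamma_m\},\qquad n\in\N,
\]
together with the analogous identity $\sigma(\cL)=\sigma(\cA)\setminus\{-i\gamma_e,-i\gamma_m\}$; in particular, for $\omega\notin\{-i\gamma_e,-i\gamma_m\}$ one has $\omega\in\rho(\cL)$ if and only if $\omega\in\rho(\cA)$.

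Next I would verify the two inclusions directly from the definition of $\sigma_{\rm poll}$. For $\subseteq$: if $\omega\in\sigma_{\rm poll}((\cL_n)_n)$, then $\omega\in\rho(\cL)$, so $\omega\notin\{-i\gamma_e,-i\gamma_m\}$ and $\omega\in\rho(\cA)$, and there is an infinite $I\subseteq\N$ and $\omega_n\in\sigma(\cL_n)\subseteq\sigma(\cA_n)$, $n\in I$, with $\omega_n\to\omega$; this exhibits $\omega\in\sigma_{\rm poll}((\cA_n)_n)$, necessarily distinct from the two poles. For $\supseteq$: if $\omega\in\sigma_{\rm poll}((\cA_n)_n)\setminus\{-i\gamma_e,-i\gamma_m\}$, then $\omega\in\rho(\cA)$, hence $\omega\in\rho(\cL)$, and there is an infinite $I$ with $\lambda_n\in\sigma(\cA_n)$, $\lambda_n\to\omega$; since $\omega$ lies at a strictly positive distance from $\{-i\gamma_e,-i\gamma_m\}$, for all large $n\in I$ we have $\lambda_n\notin\{-i\gamma_e,-i\gamma_m\}$, hence $\lambda_n\in\sigma(\cA_n)\setminus\{-i\gamma_e,-i\gamma_m\}=\sigma(\cL_n)$, so that $\omega\in\sigma_{\rm poll}((\cL_n)_n)$ after restricting to the cofinite part of $I$.

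I do not expect a genuine obstacle: the argument is entirely a matter of unwinding definitions, using only the structural Schur-complement identity and no convergence input such as Theorem \ref{thm:gsr}. The one point that needs a moment of care is the behaviour near the two poles—the spectra $\sigma(\cA_n)$ may contain $-i\gamma_e$ and $-i\gamma_m$ (contributed by the $-iD_n$ block), whereas by construction $\sigma(\cL_n)$ never does—but since every pollution point is separated from the poles, these spurious spectral values are irrelevant to the limiting process, and the two pollution sets therefore differ by exactly $\{-i\gamma_e,-i\gamma_m\}$.
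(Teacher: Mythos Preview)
Your proposal is correct and follows essentially the same approach as the paper: invoke \cite[Thm.~2.3.3(ii)]{TreB} to get $\sigma(\cL_n)=\sigma(\cA_n)\setminus\{-i\gamma_e,-i\gamma_m\}$ (and the untruncated analogue), then verify both inclusions directly from the definition of $\sigma_{\rm poll}$, using that any limit point distinct from the poles eventually pulls the approximating sequence away from them. The paper's proof is slightly terser but structurally identical.
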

\begin{proof}
\cite[Thm. 2.3.3(ii)]{TreB} implies that, for fixed $n$, $\sigma(\cL_n) = \sigma(\cA_n) \setminus \{-i \gamma_e, -i \gamma_m \}$ and similarly $\sigma(\cL) = \sigma(\cA) \setminus \{-i \gamma_e, -i \gamma_m \}$. Hence, $\sigma_{\rm poll} ((\cL_n)_n) \subset \sigma_{\rm poll}((\cA_n)_n) \setminus \{-i \gamma_e, -i \gamma_m \}$. Conversely, one may observe that if $\la_n \in \sigma(\cA_n)$, $\la_n \to \la \in (\rho(\cA) \setminus  \{-i \gamma_e, -i \gamma_m \})$, then for big enough $n$ $\la_n \notin \{-i \gamma_e, -i \gamma_m \}$, so $\la_n \in \sigma(\cL_n)$ and $\la_n \to \la \in \rho(\cL)$. Thus, $\la \in \sigma_{\rm poll}((\cL_n)_n)$.
\end{proof}

\begin{rem}
The poles $\{-i \gamma_e, -i \gamma_m \}$ may or may not be in the essential spectrum of $\cA$. If $B$ is compactly supported and $A$ restricted to divergence-free vector field has compact resolvent, then the poles belong to $\sigma_e(\cA)$, see \cite[Prop. 2.2]{MR3543766} for a proof in a similar setting. However, for the purposes of this paper we are not interested in the poles that are out of the domain of definition of $\cL$.
\end{rem}

%

\begin{prop}
$\sigma_e((\cA_n)_n) \setminus \{-i \gamma_e, -i \gamma_m \}  = \sigma_e((\cL_n)_n) \setminus \{-i \gamma_e, -i \gamma_m \}$
\end{prop}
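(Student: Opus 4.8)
The plan is to transport Weyl singular sequences (in the sense of Definition \ref{lim-sigma-ess}) between the two families by means of the exact factorisation of $\cA_n-\omega$ through its first Schur complement $\cL_n(\omega)$, which is available for every $\omega\notin\{-i\gamma_e,-i\gamma_m\}$ and involves only bounded, boundedly invertible triangular factors. First I would record this factorisation: writing $\cA_n-\omega$ in the coarse $2\times2$ block form $\begin{pmatrix} A_n-\omega & B\\ B & -iD-\omega\end{pmatrix}$ on $L^2(\Omega_n)^6\oplus L^2(\Omega_n)^6$, and using that $-iD-\omega$ is boundedly invertible with $\norma{(-iD-\omega)^{-1}}$ depending only on $\omega$, a direct computation (using \eqref{intro:cL}) shows that on $\dom(\cA_n)=\dom(\cL_n(\omega))\oplus L^2(\Omega_n)^6$ one has
\[
\cA_n-\omega=T_n\begin{pmatrix}\cL_n(\omega)&0\\0&-iD-\omega\end{pmatrix}S_n,\qquad
T_n=\begin{pmatrix} I & B(-iD-\omega)^{-1}\\0 & I\end{pmatrix},\quad
S_n=\begin{pmatrix} I & 0\\ (-iD-\omega)^{-1}B & I\end{pmatrix},
\]
where $T_n^{\pm1}$ and $S_n^{\pm1}$ are bounded uniformly in $n$ (being restrictions to $\Omega_n$ of fixed bounded multiplication operators, together with the $n$-independent scalars coming from $(-iD-\omega)^{-1}$).

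For the inclusion $\sigma_e((\cL_n)_n)\setminus\{-i\gamma_e,-i\gamma_m\}\subseteq\sigma_e((\cA_n)_n)\setminus\{-i\gamma_e,-i\gamma_m\}$ I would take $\omega$ in the left-hand set together with a sequence $(u_n)$ with $u_n\in\dom(\cL_n(\omega))$, $\norma{u_n}=1$, $u_n\rightharpoonup0$, $\cL_n(\omega)u_n\to0$, and set $h_n:=S_n^{-1}\binom{u_n}{0}=\binom{u_n}{-(-iD-\omega)^{-1}Bu_n}\in\dom(\cA_n)$. The factorisation gives $(\cA_n-\omega)h_n=T_n\binom{\cL_n(\omega)u_n}{0}=\binom{\cL_n(\omega)u_n}{0}\to0$; the normalisation is controlled by $1=\norma{u_n}\le\norma{h_n}\le\bigl(1+\norma{(-iD-\omega)^{-1}B}^2\bigr)^{1/2}$; and $h_n\rightharpoonup0$ since $(-iD-\omega)^{-1}B$ is bounded, hence weak--weak continuous. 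Normalising $h_n$ yields the required Weyl sequence. Conversely, given $\omega\in\sigma_e((\cA_n)_n)\setminus\{-i\gamma_e,-i\gamma_m\}$ with a sequence $h_n=\binom{u_n}{v_n}$, I would apply the uniformly bounded operator $T_n^{-1}$ to obtain $\cL_n(\omega)u_n\to0$ and $(-iD-\omega)\bigl(v_n+(-iD-\omega)^{-1}Bu_n\bigr)\to0$, whence $v_n=-(-iD-\omega)^{-1}Bu_n+o(1)$. Together with $\norma{u_n}^2+\norma{v_n}^2=1$ this forces $\inf_n\norma{u_n}>0$ (otherwise a subsequence of $(v_n)$ would tend to $0$, contradicting $\norma{h_n}=1$), while $\norma{u_n}\le1$; hence $\tilde u_n:=u_n/\norma{u_n}$ is a Weyl singular sequence for $\cL_n(\omega)$ — weak convergence to $0$ being inherited by projecting $h_n\rightharpoonup0$ onto the first block component and using $\inf_n\norma{u_n}>0$, and $u_n\in\dom(A_n)=\dom(\cL_n(\omega))$ because $h_n\in\dom(\cA_n)$ — so $\omega\in\sigma_e((\cL_n)_n)$.

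There is no genuinely hard step here once the factorisation is in place; the only points needing care are the domain bookkeeping in the factorisation, which is harmless because every off-diagonal correction is an everywhere-defined bounded multiplication operator, and the minor nuisance that the norms $\norma{h_n}$ and $\norma{u_n}$ need not converge, which is handled by the two-sided pinching estimates above (or by passing to subsequences). This statement is precisely the limiting-spectrum counterpart of the fixed-$n$ identity $\sigma_e(\cL_n)=\sigma_e(\cA_n)\setminus\{-i\gamma_e,-i\gamma_m\}$ coming from \cite[Thm.\ 2.3.3(ii)]{TreB}; running the same argument on the resolvent identity \eqref{eq:res_eq_LS} would simultaneously yield the analogous equality with $\cS_{1,n}$ inside $\Sigma$, namely $\sigma_e((\cL_n)_n)\cap\Sigma=\sigma_e((\cS_{1,n})_n)\cap\Sigma$.
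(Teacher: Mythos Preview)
Your proof is correct and uses essentially the same mechanism as the paper's: the paper invokes an abstract result (\cite[Prop.~2.3.4(i),(iii)]{SThesis}) for sequences of diagonally dominant block matrices whose off-diagonal entries are uniformly bounded and whose lower-right corner has $\Delta_b((-iD_n)_n)=\C\setminus\{-i\gamma_e,-i\gamma_m\}$, and that abstract result is proved precisely via the Frobenius--Schur factorisation $\cA_n-\omega=T_n\,\diag(\cL_n(\omega),-iD-\omega)\,S_n$ that you write out explicitly. Your version is therefore a self-contained unpacking of the citation, with the same hypotheses (uniform boundedness of $B$, $(-iD-\omega)^{-1}$) driving the uniform bounds on $T_n^{\pm1}$, $S_n^{\pm1}$.
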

\begin{proof}
$\cA_n$ is diagonally dominant for every $n$ and the norms of the off-diagonal entries do not depend on $n$;
furthermore, $\Delta_b((D_n)_n = \C \setminus \{-i \gamma_e, -i \gamma_m \}$. The result
therefore follows from \cite[Proposition 2.3.4(i),(iii)]{SThesis}.
\end{proof}

\begin{prop} \label{prop:sigma-app-Ln} The following identities hold.
\begin{enumerate}[label=(\roman*)]
\item $\sigma_e((\cL_n)_n) \cup \sigma_p(\cL) = \sigma_{\rm app}((\cL_n)_n)$;
\item $(\sigma_e((\cS_{1,n})_n) \cup  \sigma_p(\cS_1)) \cap \Sigma = \sigma_{\rm app}((\cS_{1,n})_n) \cap \Sigma$.
\end{enumerate}
\end{prop}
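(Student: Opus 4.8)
The plan is to prove (i) for the rational pencil $\cL$ — whose entries are bounded multiplication operators and first--order differential operators — and then to deduce (ii) by transferring the statement to $\cS_1$ on $\Sigma$ using the equivalences already at hand: $\sigma_p(\cL)\cap\Sigma=\sigma_p(\cS_1)\cap\Sigma$ (Proposition \ref{sigmaLS}), $\Delta_b((\cL_n)_n)\cap\Sigma=\Delta_b((\cS_{1,n})_n)\cap\Sigma$ (Theorem \ref{thm:gsr}(iii)), and the $J$--self--adjointness of each $\cS_{1,n}(\omega)$, $\omega\in\Sigma$. In both parts the inclusion ``$\supseteq$'' splits into the trivial part $\sigma_e((F_n)_n)\subseteq\sigma_{\rm app}((F_n)_n)$ (Definitions \ref{lim-sigma-app}--\ref{lim-sigma-ess}) and a quasimode construction for the point spectrum of the limit, and the inclusion ``$\subseteq$'' comes from extracting weak limits of asymptotically null sequences. (As usual, $\sigma_{\rm app}$, $\sigma_e$, $\Delta_b$ are read along infinite index subsets, as in the definition of $\sigma_{\rm poll}$, so that ``no weakly null subsequence'' is the correct negation of membership in $\sigma_e((\cdot)_n)$.)

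For (i) the quasimode construction is clean. Given a normalised eigenvector $h=(E,H)$ of $\cL(\omega)$, I would multiply it by radial cut--offs $\chi_n$ with $0\le\chi_n\le 1$, $\chi_n\to 1$ a.e., $\norma{\nabla\chi_n}_\infty\to 0$, chosen to vanish near the ``artificial'' part $\partial\Omega_n\cap\Omega$ of the truncated boundary (possible since this part recedes to infinity, while $E$ already satisfies $\nu\times E=0$ on $\partial\Omega$), so that $\chi_n h\in\dom(\cL_n(\omega))$. Since the commutator of $\cL(\omega)$ with multiplication by $\chi_n$ produces only the terms $i\nabla\chi_n\times H$ and $-i\nabla\chi_n\times E$, one obtains $\norma{\cL_n(\omega)(\chi_n h)}\le\sqrt2\,\norma{\nabla\chi_n}_\infty\to 0$ while $\norma{\chi_n h}\to 1$ by dominated convergence; hence $\omega\in\sigma_{\rm app}((\cL_n)_n)$. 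Conversely, given $h_n=(u_n,v_n)$ with $\norma{h_n}=1$ and $\cL_n(\omega)h_n\to 0$, the second and first scalar equations bound $\curl_0 u_n$ and $\curl v_n$ in terms of $\norma{v_n}$, $\norma{u_n}$ and the vanishing data, so $(u_n)$, $(v_n)$ are bounded in $H_0(\curl,\Omega_n)$, $H(\curl,\Omega_n)$; a subsequence converges weakly, $h_n\rightharpoonup h=(u,v)$, with $h\neq 0$ (else $\omega\in\sigma_e((\cL_n)_n)$), and passing to the weak limit using weak closedness of $\curl_0$, $\curl$ and weak continuity of bounded multiplications gives $\cL(\omega)h=0$, i.e. $\omega\in\sigma_p(\cL)$.

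For (ii) I would not attempt a cut--off argument on $\cS_1$ directly, since the leading term $\curl\,\Theta_m(\omega)^{-1}\curl_0$ carries the possibly non--smooth coefficient $\theta_m$ and a cut--off eigenfunction need not even lie in $\dom(\cS_{1,n}(\omega))$. The ``$\subseteq$'' direction still goes through: testing $\cS_{1,n}(\omega)u_n=\eps_n\to 0$ against $u_n$ and taking imaginary parts, the uniform lower bound $|\im\Theta_m(\omega,x)^{-1}|\ge c_1(\omega)>0$ valid on $\Sigma$ (Lemma \ref{lemma:bddcls}, Theorem \ref{thm: s_1 form_refined}) bounds $(\curl_0 u_n)$; a weakly convergent subsequence then has nonzero limit $u$ (since $\omega\notin\sigma_e((\cS_{1,n})_n)$), and passing to the limit in the form identity $(\eps_n,\phi)=\fs_{1,n}(\omega)[u_n,\phi]$ for $\phi\in C^\infty_c(\Omega)^3$ (a core of $\fs_1(\omega)$) yields $\fs_1(\omega)[u,\phi]=0$, i.e. $\cS_1(\omega)u=0$. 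For ``$\supseteq$'' I reduce to (i): by Proposition \ref{sigmaLS}, $\sigma_p(\cS_1)\cap\Sigma=\sigma_p(\cL)\cap\Sigma\subseteq\sigma_{\rm app}((\cL_n)_n)\cap\Sigma$, so it suffices to show $\sigma_{\rm app}((\cL_n)_n)\cap\Sigma\subseteq\sigma_{\rm app}((\cS_{1,n})_n)\cap\Sigma$; equivalently, that $\omega\in\Sigma\setminus\sigma_{\rm app}((\cS_{1,n})_n)$ forces $\omega\in\Delta_b((\cS_{1,n})_n)$, whence $\omega\in\Delta_b((\cL_n)_n)$ by Theorem \ref{thm:gsr}(iii) and therefore $\omega\notin\sigma_{\rm app}((\cL_n)_n)$. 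Here a uniform lower bound $\norma{\cS_{1,n}(\omega)x}\ge c\norma{x}$ gives injectivity and closed range, and $J$--self--adjointness of $\cS_{1,n}(\omega)$ upgrades trivial kernel to dense range, so $\cS_{1,n}(\omega)$ is boundedly invertible with $\norma{\cS_{1,n}(\omega)^{-1}}\le 1/c$, i.e. $\omega\in\Delta_b((\cS_{1,n})_n)$.

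The main obstacle is the structural asymmetry between $\cL$ and $\cS_1$: the quasimode/weak--limit machinery applies verbatim to $\cL$, but on $\cS_1$ neither a direct quasimode (domain problems from $\theta_m$ inside $\curl\,\Theta_m^{-1}\curl_0$) nor an elementary Schur--complement manipulation of almost--null sequences transfers, since the resolvent formula \eqref{eq:res_eq_LS} involves $\curl$ applied to $\Theta_m(\omega)^{-1}$ times data controlled only in $L^2$. Routing the ``$\supseteq$'' inclusion for $\cS_1$ through the region of boundedness — where $\Delta_b((\cL_n)_n)\cap\Sigma=\Delta_b((\cS_{1,n})_n)\cap\Sigma$ is already established — and invoking $J$--self--adjointness to pass from ``bounded below'' to ``boundedly invertible'' is what circumvents this. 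The remaining points (admissible cut--offs adapted to the given exhaustion, and positivity of the ellipticity constant $c_1(\omega)$ on $\Sigma$) are routine.
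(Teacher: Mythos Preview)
Your argument is correct, and it takes a genuinely different route from the paper's own proof.

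The paper proves both inclusions of (i) in a uniform, abstract way using the generalised strong resolvent convergence already established in Theorem~\ref{thm:gsr}. For $\sigma_p(\cL)\subset\sigma_{\rm app}((\cL_n)_n)$ the paper does not build cut-offs at all: it observes that for $t<0$ large one has $\omega\in\Delta_b((\cL_n(\cdot)+it)_n)\cap\rho(\cL(\cdot)+it)$, and then takes $u_n:=(\cL_n(\omega)+it)^{-1}P_n(\cL(\omega)+it)u$, which lies in $\dom(\cL_n)$ and satisfies $u_n\to u$, $\cL_n(\omega)u_n\to\cL(\omega)u=0$. For the reverse inclusion, rather than bounding $\curl_0 u_n$ and $\curl v_n$ directly from the two scalar equations, the paper writes $u_n=\cL_n(\omega_0)^{-1}\cB(\omega,\omega_0)P_n u_n+\eps_n$ with $\omega_0\in\Delta_b((\cL_n)_n)\cap\rho(\cL)$ and $\cB$ bounded, and passes to the weak limit using $\cL_n(\omega_0)^{-1}P_n\sto\cL(\omega_0)^{-1}$. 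For (ii) the paper simply repeats the \emph{same} resolvent-based arguments with $\cS_{1,n}(\omega)+it$ in place of $\cL_n(\omega_0)$, avoiding any detour through $\cL$.

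Your approach trades abstraction for explicitness. For (i) the commutator/cut-off quasimode and the direct weak-limit argument are more elementary and exploit the first-order structure of $\cL$; a minor caveat is that for a general Lipschitz exhaustion one should not insist on $\|\nabla\chi_n\|_\infty\to 0$ but rather take $\chi_n$ with uniformly bounded gradient and support of $\nabla\chi_n$ in $\Omega\setminus\Omega_{m(n)}$ with $m(n)\to\infty$, so that $\|\nabla\chi_n\times H\|\le C\|H\|_{L^2(\Omega\setminus\Omega_{m(n)})}\to 0$ (or, equivalently, approximate $E$ by $C^\infty_c(\Omega)^3$ and merely restrict $H$). For (ii) you correctly identify that a cut-off on $\dom(\cS_{1,n})$ is problematic because of the non-smooth $\theta_m$ inside $\curl\,\Theta_m(\omega)^{-1}\curl_0$, and your workaround --- routing $\sigma_p(\cS_1)\cap\Sigma$ through $\sigma_p(\cL)\cap\Sigma\subset\sigma_{\rm app}((\cL_n)_n)\cap\Sigma$ and then using $\Delta_b((\cL_n)_n)\cap\Sigma=\Delta_b((\cS_{1,n})_n)\cap\Sigma$ together with $J$-self-adjointness to get $\Sigma\setminus\sigma_{\rm app}((\cS_{1,n})_n)\subset\Delta_b((\cS_{1,n})_n)$ --- is a clean alternative. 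The paper's shift trick $\cS_{1,n}(\omega)+it$ is shorter and treats $\cL$ and $\cS_1$ symmetrically, while your route has the advantage of not re-running any gsr argument for $\cS_1$ and instead leveraging exactly the structural equivalences (Proposition~\ref{sigmaLS}, Theorem~\ref{thm:gsr}(iii)) already on the table.
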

\begin{proof}
The proof is a generalisation of \cite[Prop. 2.15(ii)]{MR3831156} to families of operators.\\
Observe that the inclusion $\sigma_p(\cL) \subset \sigma_{\rm app}((\cL_n)_n)$ (and the analogous inclusion for $(\cS_{1,n})_n)$) are consequence of the gsr convergence of $\cL_n$ to $\cL$ established in Theorem \ref{thm:gsr}. Indeed, we claim that if $\cL_n(\omega_0) \gsr \cL(\omega_0)$ for some $\omega_0 \in \Delta_b((\cL_n)_n) \cap \rho(\cL)$, then for all $u \in \dom(\cL)$ there exists a sequence $u_n \in \dom(\cL_n)$, $\norma{u_n} = 1$ such that $\norma{u_n - u} \to 0$, $\norma{\cL_n(\omega)u_n - \cL(\omega)u} \to 0$, $n \to \infty$, $\omega \in \C \setminus \{-i \gamma_e, -i \gamma_m\}$. Assuming the claim is satisfied, if $\omega \in \sigma_p(\cL)$ with eigenfunction $u$, then there exists an approximating sequence $(u_n)_n$ as above, and therefore $\sigma_p(\cL) \subset \sigma_{\rm app}((\cL_n)_n)$.\\
To prove the claim, one first realises that if $\omega \in \Delta_b((\cL_n)_n) \cap \rho(\cL)$ then the sequence $u_n := \cL_n(\omega)^{-1} P_n \cL(\omega) u$ has the required properties. In the general case, one first note that for $t < 0$ sufficiently big $\omega \in \Delta_b((\cL_n(\cdot) + i t)) \cap \rho(\cL(\cdot) + it)$, see the proof of Lemma \ref{lemma:bddcls}. Therefore, given $u \in \dom(\cL + it) =  \dom(\cL)$, there exists a sequence $u_n \in \dom( \cL_n(\cdot) + i t) = \dom(\cL_n)$ such that $\norma{u_n - u} \to 0$ and $\norma{(\cL_n(\omega) + it) u_n - (\cL(\omega) + it) u} \to 0$, and therefore $\norma{ \cL_n(\omega) u_n - \cL(\omega) u} \to 0$, as claimed.\\
The inclusion "$\subset$" in $(i)$ and $(ii)$ is immediate from Definitions \ref{lim-sigma-app}, \ref{lim-sigma-ess}, and the previous observation.\\
We now prove that $\sigma_e((\cL_n)_n) \cup \sigma_p(\cL) \supset \sigma_{\rm app}((\cL_n)_n)$. Let $\omega \in \sigma_{\rm app}((\cL_n)_n)$, that is, there exists a sequence of elements $u_n \in \dom(\cL_n)$, $\norma{u_n} = 1$, $n \in \N$, such that $\norma{\cL_n(\omega)u_n} \to 0$. Since the unit ball in a Hilbert space is weakly compact, we may assume that, up to a subsequence, $u_n \rightharpoonup u$ in $L^2(\Omega)^3 \oplus L^2(\Omega)^3$. If $u = 0$, then $\omega \in \sigma_e((\cL_n)_n)$, concluding the proof. Assume then $u \neq 0$. Theorem \ref{thm:gsr}(iii) implies that there exists $\omega_0 \in \Delta_b((\cL_n)_n) \cap \rho(\cL)$ such that $\cL_n(\omega_0) \gsr \cL(\omega_0)$. Now,
\begin{equation}
\label{eq:cL_n(omega_0)}
\cL_n(\omega_0)u_n = (\cL_n(\omega_0) - \cL_n(\omega))u_n + \cL_n(\omega) u_n
\end{equation}
and we notice that $\cL_n(\omega_0) - \cL_n(\omega) := \cB(\omega, \omega_0) \diag(P_n, P_n)$, where $\cB$ is a bounded $2 \times 2$ block operator matrix not depending on $n$. Taking the inverse of $\cL_n(\omega_0)$ in \eqref{eq:cL_n(omega_0)} gives
\[
u_n = \cL_n(\omega_0)^{-1} \cB\,\, \diag(P_n, P_n) u_n + \eps_n, \quad \eps_n:= \cL_n(\omega_0)^{-1} \cL_n(\omega)u_n.
\]
Now, $\eps_n\to 0$, $n \to \infty$ because $\omega_0 \in \Delta_b((\cL_n)_n)$ and $\cL_n(\omega) u_n \to 0$, $n \to \infty$. Moreover, the weak convergence $u_n \rightharpoonup u$, the strong convergence $\cL_n(\omega_0)^{-1} \sto \cL(\omega_0)^{-1}$ and the uniqueness of the weak limit imply that
\[
u = \cL(\omega_0)^{-1} \cB\, u,
\]
or equivalently, since $\cB = \cL(\omega_0) - \cL(\omega)$, that $\cL(\omega) u = 0$. Thus, $\omega \in \sigma_p(\cL)$.\\
The proof of $(\sigma_e((\cS_{1,n})_n) \cup  \sigma_p(\cS_1)) \cap \Sigma \supset \sigma_{\rm app}((\cS_{1,n})_n) \cap \Sigma$ in $(ii)$ is analogous to the proof of the same inclusion in $(i)$ with $\cS_{1,n}(\omega)$ replacing $\cL_n(\omega)$; in place of $\cS_{1,n}(\omega_0)$ for some $\omega_0 \in \Delta_b((\cS_{1,n})_n)$ we choose $\cS_{1,n}(\omega) + i t$ for $t < 0$ big enough. Notice that this is possible since $\omega \in \Delta_b((\cS_{1,n}(\cdot) + it)_n)$ for $t<0$ large enough, see proof of Lemma \ref{lemma:bddcls}. The proof is concluded.
\end{proof}

\begin{theorem}
$\sigma_e((\cL_n)_n) \cap \Sigma = \sigma_e ((\cS_{1,n})_n) \cap \Sigma$
\end{theorem}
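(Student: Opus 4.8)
The plan is to prove the two inclusions separately, in each case by transforming a weakly singular (Weyl) sequence for one truncated pencil into one for the other, in the spirit of Proposition~\ref{sigmaLS}, but now carrying every estimate uniformly in $n$. One may assume $\omega\in\Sigma_1$ (the case $\omega\in\Sigma_2$ is identical, with $-i$ in place of $i$), and one uses repeatedly that $\omega\in\Sigma$ forces $0<\essinf_x|\Theta_m(\omega,x)|$, so that, together with $\theta_m\in L^\infty$, the $\Theta_m(\omega)^{\pm1}$ are bounded multiplication operators with $n$-independent norms; likewise the coercivity constants of Lemma~\ref{lemma:bddcls} are $n$-independent, because the truncated problems see only the restrictions of the fixed functions $\theta_e,\theta_m$ to $\Omega_n$.

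\textbf{Inclusion $\sigma_e((\cS_{1,n})_n)\cap\Sigma\subseteq\sigma_e((\cL_n)_n)\cap\Sigma$.} Given $u_n\in\dom(\cS_{1,n}(\omega))$ weakly null with $\norma{u_n}=1$ and $\cS_{1,n}(\omega)u_n\to0$, I would put $v_n:=-i(\omega+i\gamma_m)\Theta_m(\omega)^{-1}\curl_0 u_n\in H(\curl,\Omega_n)$ (membership by the definition of $\dom(\cS_{1,n}(\omega))$) and $h_n:=(u_n,v_n)^t/\norma{(u_n,v_n)}$. A direct computation gives $\cL_n(\omega)(u_n,v_n)^t=((i\gamma_m+\omega)\cS_{1,n}(\omega)u_n,0)^t\to0$. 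Fixing $t\in\R$ with $|t|$ large so that $\cS_{1,n}(\cdot)+it$ is boundedly invertible uniformly in $n$ (Lax--Milgram), the operators $\curl_0(\cS_{1,n}(\omega)+it)^{-1}$ are uniformly bounded, as in the proof of Theorem~\ref{thm:gsr}(iii), so $\curl_0 u_n=\curl_0(\cS_{1,n}(\omega)+it)^{-1}(\cS_{1,n}(\omega)u_n+itu_n)$, hence $v_n$, are bounded; thus $\norma{(u_n,v_n)}$ is bounded above and below by $1$. Finally $\curl_0 u_n\rightharpoonup0$ (test against $C^\infty_c(\Omega)^3$, then use density and the uniform bound on $\norma{\curl_0 u_n}$), whence $v_n\rightharpoonup0$ and $h_n$ is a weakly null normalised Weyl sequence for $\cL_n(\omega)$, so $\omega\in\sigma_e((\cL_n)_n)$.

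\textbf{Inclusion $\sigma_e((\cL_n)_n)\cap\Sigma\subseteq\sigma_e((\cS_{1,n})_n)\cap\Sigma$.} Given $(f_n,g_n)$ weakly null with $\norma{(f_n,g_n)}=1$ and $\cL_n(\omega)(f_n,g_n)^t=:(\phi_n,\psi_n)^t\to0$, I would solve the second row for $g_n$ and substitute into the first, obtaining for all $v\in H_0(\curl,\Omega_n)$ the weak identity
\[
\fs_1(\omega)[f_n,v]=\Big\langle\tfrac{\phi_n}{i\gamma_m+\omega},v\Big\rangle+i\big\langle\Theta_m(\omega)^{-1}\psi_n,\curl_0 v\big\rangle=:\langle F_n,v\rangle ,
\]
with $\norma{F_n}_{H_0(\curl,\Omega_n)^*}\le|i\gamma_m+\omega|^{-1}\norma{\phi_n}+\norma{\Theta_m(\omega)^{-1}}_\infty\norma{\psi_n}\to0$. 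One first shows $\liminf\norma{f_n}>0$: otherwise, along a subsequence $\norma{g_n}\to1$, so the second row forces $\norma{\curl_0 f_n}$ to be bounded above and below; for $\hat f_n:=f_n/\norma{\curl_0 f_n}$ (weakly null, $\norma{\hat f_n}_{L^2}\to0$, $\norma{\curl_0\hat f_n}=1$) testing the identity at $v=\hat f_n$ gives $|\fs_1(\omega)[\hat f_n]|\to0$, whereas by Theorem~\ref{thm: s_1 form_refined} the quantity $\fs_1(\omega)[\hat f_n]=\langle\Theta_m(\omega)^{-1}\curl_0\hat f_n,\curl_0\hat f_n\rangle+o(1)$ has modulus bounded below by a positive constant depending only on $\omega$, a contradiction. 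So $\norma{f_n}\ge\delta>0$ along the working subsequence. With $u_n:=f_n/\norma{f_n}$ and $B<0$ the bounded multiplication operator of Lemma~\ref{lemma:bddcls} for which $\cS_{1,n}(\cdot)+iB$ is boundedly invertible uniformly in $n$, set
\[
\tilde u_n:=(\cS_{1,n}(\omega)+iB)^{-1}\Big(\tfrac{\phi_n}{(i\gamma_m+\omega)\norma{f_n}}+iBu_n\Big)\in\dom(\cS_{1,n}(\omega)).
\]
By the weak identity, $u_n-\tilde u_n$ is the Lax--Milgram solution for the functional $v\mapsto i\langle\Theta_m(\omega)^{-1}\psi_n,\curl_0 v\rangle/\norma{f_n}$, whose $H_0(\curl,\Omega_n)^*$-norm is $\le\norma{\Theta_m(\omega)^{-1}}_\infty\norma{\psi_n}/\delta\to0$; hence $\tilde u_n\to u_n$ in $H(\curl,\Omega_n)$, so $\tilde u_n$ is weakly null with $\norma{\tilde u_n}\to1$, and $\cS_{1,n}(\omega)\tilde u_n=\tfrac{\phi_n}{(i\gamma_m+\omega)\norma{f_n}}+iB(u_n-\tilde u_n)\to0$ in $L^2$. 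Thus $\tilde u_n/\norma{\tilde u_n}$ is a weakly null normalised Weyl sequence for $\cS_{1,n}(\omega)$, i.e.\ $\omega\in\sigma_e((\cS_{1,n})_n)$.

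\textbf{Main obstacle.} The difficulty lies entirely in the second inclusion, precisely in the fact that, since $\theta_m$ is merely bounded, the component $f_n$ of a Weyl sequence of $\cL_n(\omega)$ need not belong to $\dom(\cS_{1,n}(\omega))$: the term $i\curl(\Theta_m(\omega)^{-1}\psi_n)$ hidden in $F_n$ is only an element of the form dual $H_0(\curl,\Omega_n)^*$, not of $L^2$. The remedy is the $n$-uniform Lax--Milgram correction above, a truncated counterpart of the compactly perturbed resolvent used for $\sigma_{e4}$ in the proof of Proposition~\ref{sigmaLS}; its success hinges on the operator norms of $(\cS_{1,n}(\omega)+iB)^{-1}$ and of $\curl_0(\cS_{1,n}(\omega)+iB)^{-1}$ being bounded independently of $n$, which is where it matters that $\theta_e,\theta_m$ are fixed functions on $\Omega$ and the truncations merely restrict them. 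A secondary point, dispatched by the rescaling argument, is ruling out the degenerate scenario in which the Weyl sequence of $\cL_n(\omega)$ concentrates entirely in its magnetic component $g_n$.
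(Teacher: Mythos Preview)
Your proof is correct and proceeds along a genuinely different route from the paper's. The paper argues indirectly through the chain of identities
\[
\sigma_e((\cL_n)_n)\cup\sigma_p(\cL)=\sigma_{\rm app}((\cL_n)_n)=\C\setminus\Delta_b((\cL_n)_n),
\]
together with its $\cS_{1,n}$-counterpart, the equality $\Delta_b((\cL_n)_n)\cap\Sigma=\Delta_b((\cS_{1,n})_n)\cap\Sigma$ from Theorem~\ref{thm:gsr}(iii), and $\sigma_p(\cL)\cap\Sigma=\sigma_p(\cS_1)\cap\Sigma$ from Proposition~\ref{sigmaLS}. Your approach instead transforms Weyl singular sequences directly, carrying all estimates $n$-uniformly: the first inclusion mirrors the $\sigma_{e2}$ step in the proof of Proposition~\ref{sigmaLS}, while for the second you introduce the Lax--Milgram correction $\tilde u_n$, a truncated analogue of the shifted-resolvent device in Lemma~\ref{lemma:bddcls}, which repairs the failure of $f_n$ to lie in $\dom(\cS_{1,n}(\omega))$.

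The paper's route is shorter but leans heavily on the machinery of Proposition~\ref{prop:sigma-app-Ln} and Theorem~\ref{thm:gsr}; moreover, its displayed chain literally yields only
\[
(\sigma_e((\cL_n)_n)\cup\sigma_p(\cL))\cap\Sigma=(\sigma_e((\cS_{1,n})_n)\cup\sigma_p(\cS_1))\cap\Sigma,
\]
and since $A\cup C=B\cup C$ does not force $A=B$, an extra word is needed to extract the stated equality of the limiting essential spectra themselves. Your direct construction sidesteps this entirely and delivers the claimed identity without ambiguity. The cost is the longer case analysis and the auxiliary \emph{a~priori} step ruling out Weyl sequences of $\cL_n(\omega)$ concentrated in the magnetic component; both are handled cleanly, the latter via the uniform lower bound $|\im\langle\Theta_m(\omega)^{-1}w,w\rangle|\ge c_1(\omega)\|w\|^2$ valid for $\omega\in\Sigma$.
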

\begin{proof}
Proposition \ref{prop:sigma-app-Ln} implies
\[
\sigma_e((\cL_n)_n) \cup \sigma_p(\cL) = \sigma_{\rm app}((\cL_n)_n), \quad \sigma_e((\cS_{1,n})_n) \cup  \sigma_p(\cS_1)= \sigma_{\rm app}((\cS_{1,n})_n)
\]
Now, the definitions of region of boundedness and of limiting approximate point spectrum, together with the equality $\sigma_{\rm app}((\cL_n)_n) = \sigma_{\rm app}((\cL^*_n)_n)^*$, imply that $\sigma_{\rm app}((\cL_n)_n) = \C \setminus \Delta_b((\cL_n)_n)$ and $\sigma_{\rm app}((\cS_{1,n})_n) = \C \setminus \Delta_b((\cS_{1,n})_n)$. By Thm. \ref{sigmaLS}, $\sigma_p(\cL) \cap \Sigma = \sigma_p(\cS_1) \cap \Sigma$; and as a consequence of Thm. \ref{thm:gsr}(iii), $\Delta_b((\cS_{1,n})_n) \cap \Sigma = \Delta_b((\cL_n)_n) \cap \Sigma$. Thus, up to intersection with $\Sigma$ we have
\[
\begin{split}
\sigma_e((\cL_n)_n) \cup \sigma_p(\cL) &= \sigma_{\rm app}((\cL_n)_n) = \C \setminus \Delta_b((\cL_n)_n) = \C \setminus \Delta_b((\cS_{1,n})_n) \\
&= \sigma_{\rm app}((\cS_{1,n})_n) = \sigma_e((\cS_{1,n})_n) \cup \sigma_p(\cS_1).\qedhere
\end{split}
\]
\end{proof}

In the following proposition we use the notion of \emph{discrete compactness} for sequences of operators in varying Hilbert spaces. We refer to \cite[Def. 2.5]{MR3694623} and the references therein for the relevant definitions and properties.

\begin{prop} \label{prop: decomp sigmaeS1n}
Given $n \in \N$, the following equality holds.
\[\sigma_e((\cS_{1,n})_n) \cap \Sigma = \left(\sigma_e((\cS_{\infty, n})_n) \cup (\, \sigma_e((G_{n})_n)\right) \cap \Sigma\,),\]
where $G_n$ is defined as in \eqref{B1def}.
\end{prop}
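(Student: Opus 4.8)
The plan is to run the argument of Theorem~\ref{sigma-ess} uniformly in $n$, in the framework of families of operators acting in the varying Hilbert spaces $L^2(\Omega_n)^3 = \nabla\dot{H}^1_0(\Omega_n)\oplus H(\Div0,\Omega_n)$. Fix $\omega\in\Sigma$ and, exactly as in \eqref{eq:omcT}, set $\cT_n(\omega) := \cS_{1,n}(\omega) + M_n(\omega)$ with $M_n(\omega) := (V_e(\omega)-V_{e,\infty}(\omega))P_{\ker\Div,n}$. The first step is to show that $(M_n(\omega))_n$ is a discretely compact family in the sense of \cite[Def.~2.5]{MR3694623}. Using the splitting $V_e(\omega)-V_{e,\infty}(\omega) = m^\delta_c + m_\delta$ induced by \eqref{eq:coeffs-infty}, with $m^\delta_c$ compactly supported in $\Omega$ and $\|m_\delta\|<\delta$, one has $\|m_\delta P_{\ker\Div,n}\|_{\cB(H(\curl,\Omega_n),L^2(\Omega_n)^3)}<\delta$ uniformly in $n$, while for $n$ large $\supp m^\delta_c$ is contained in $\Omega_n$ and the localisation argument in the proof of Proposition~\ref{thm: compactness} reduces the compactness of $m^\delta_c P_{\ker\Div,n}$ to that of a single fixed compact embedding $H_0(\curl,V)\cap H(\Div,V)\hookrightarrow L^2(V)^3$ on a fixed bounded Lipschitz set $V$ with $\overline{V}\subset\Omega$ containing a neighbourhood of $\supp m^\delta_c$; letting $\delta\to0$ gives the discrete compactness. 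Since, as in the proof of Theorem~\ref{sigma-ess}, $(M_n(\omega))_n$ is consequently discretely $\cS_{1,n}(\omega)$-compact, and the limiting essential spectrum is stable under discretely compact perturbations (\cite{MR3694623}), we obtain $\sigma_e((\cS_{1,n})_n)\cap\Sigma = \sigma_e((\cT_n)_n)\cap\Sigma$.

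Next I would use the lower-triangular block structure \eqref{eq: A^0} of $\cT_n(\omega)$ with respect to the Helmholtz splitting of $L^2(\Omega_n)^3$: the $(1,1)$-entry is $G_n(\omega)$, bounded and everywhere defined; the $(2,1)$-entry $P_{\ker\Div,n}V_e(\omega)|_{\nabla\dot{H}^1_0(\Omega_n)}$ is bounded; and the $(2,2)$-entry $\cS_{m,n}(\omega)$ is $J$-self-adjoint for each $n$, which (as in the proof of Theorem~\ref{sigma-ess}) supplies the deficiency hypothesis needed in Theorem~\ref{thm: ess spec} and its family analogue. Applying the family version of Theorem~\ref{thm: ess spec} for triangular block operator matrices --- obtainable along the lines of \cite[Thm.~8.1]{BFMT} together with a splitting-into-cases argument (if a Weyl sequence $(u_n,w_n)\rightharpoonup 0$ for $\cT_n(\omega)$ has $\liminf_n\|w_n\|>0$ one recovers $\omega\in\sigma_e((\cS_{m,n})_n)$ using boundedness of the off-diagonal block, whereas if $\|w_n\|\to 0$ one recovers $\omega\in\sigma_e((G_n)_n)$; conversely a Weyl sequence for $G_n$ is lifted by solving the triangular system, which is possible when $\omega\in\Delta_b((\cS_{m,n})_n)$, the complementary case being trivial) --- yields $\sigma_e((\cT_n)_n)\cap\Sigma = (\sigma_e((G_n(\cdot))_n)\cup\sigma_e((\cS_{m,n})_n))\cap\Sigma$.

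Finally I would replace $\cS_{m,n}$ by $\cS_{\infty,n}$ through a limiting version of Proposition~\ref{thm: difference res}. For each fixed $n$ the resolvent difference of $\cC_{m,n}(\omega)-V_{e,\infty}(\omega)$ and $\cC_{\infty,n}(\omega)-V_{e,\infty}(\omega)$ factorises as in \eqref{eq:form-2nd-res.-id}; to see that the family of these differences is discretely compact one combines (i) the $\Omega$-independent bound on $\curl_{0,n}(\cC_{\infty,n}(\omega)-V_{e,\infty}(\omega))^{-1}$, which follows from Remark~\ref{rem: unif_bound_Cinfty} and, for general $\omega\in\Sigma$, from the resolvent identity in the proof of Proposition~\ref{thm: difference res} (the scalar factors there being $n$-independent), (ii) the uniform-in-$n$ boundedness of $\big(\curl_{0,n}(\cC_{m,n}(\omega)^*-\overline{V_{e,\infty}(\omega)})^{-1}\big)^*$, proved exactly as in Lemma~\ref{lemma:bddcls} with constants independent of $n$, and (iii) the discrete compactness of $((\Theta_{m,\infty}(\omega)^{-1}-\Theta_m(\omega)^{-1})P_{\ker\Div,n})_n$, established as for $(M_n(\omega))_n$ in the first step since $\Theta_{m,\infty}(\omega)^{-1}-\Theta_m(\omega)^{-1}\to 0$ at infinity. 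It then follows that $\sigma_e((\cS_{m,n})_n)\cap\Sigma = \sigma_e((\cS_{\infty,n})_n)\cap\Sigma$, and combining the three displayed identities proves the proposition.

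The main obstacle is exactly the uniformity bookkeeping underlying steps~(i)--(iii): every compactness and resolvent estimate from the proofs of Proposition~\ref{thm: difference res}, Proposition~\ref{thm: compactness} and Theorem~\ref{sigma-ess} has to be re-derived with constants that do not depend on $n$, so that ``compact for each $n$'' upgrades to ``discretely compact family''; the two facts that make this feasible are the $\Omega$-independence of the constant in Remark~\ref{rem: unif_bound_Cinfty} and that the compactly supported parts of the coefficients sit in a fixed compact subset of $\Omega$, which reduces the varying-domain compact embeddings to a single fixed one. A secondary technical point is formulating the correct family analogue of Theorem~\ref{thm: ess spec} in varying Hilbert spaces, but this follows the pattern already developed in \cite{MR3694623,BFMT,SThesis}.
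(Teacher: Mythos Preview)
Your proposal is correct and follows essentially the same three-step strategy as the paper: (1) remove the discretely compact perturbation $M_n(\omega)$ to pass from $\cS_{1,n}$ to the triangular pencil $\cT_n$; (2) apply the family version of Theorem~\ref{thm: ess spec} to split $\sigma_e((\cT_n)_n)$ into $\sigma_e((G_n)_n)\cup\sigma_e((\cS_{m,n})_n)$; (3) show $\sigma_e((\cS_{m,n})_n)=\sigma_e((\cS_{\infty,n})_n)$ via discrete compactness of the resolvent difference $\cK_n(\omega)$ using the factorisation \eqref{eq:form-2nd-res.-id} and the uniform bound of Remark~\ref{rem: unif_bound_Cinfty}.

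The one point you gloss over is that discrete compactness of $\cK_n(\omega)$ alone is not quite enough for step~(3): the paper invokes \cite[Thm.~2.12(ii)]{MR3831156}, which additionally requires the strong convergence $\cK_n(\omega)^*P_n \sto \cK(\omega)^*$. The paper obtains this from the generalised strong resolvent convergence established in Theorem~\ref{thm:gsr}(iv). This is a minor omission, easily filled by your existing ingredients, and does not affect the overall architecture of your argument.
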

\begin{proof}
This can be proved along the lines of \cite[Section 7, Section 8]{BFMT}. For the sake of completeness we recall here the main steps of the proof.\\
Observe that Proposition \ref{thm: compactness} implies that $M_{e,n}(\omega) = (\Theta_e(\omega) - \Theta_{e, \infty}(\omega))P^n_{\ker(\Div)}$ is a compact operator from $H(\curl, \Omega_n)$ to $L^2(\Omega_n)^3$ for every $n$.
According to the decomposition of the coefficients \eqref{eq:coeffs-infty}, up to an operator which is vanishing uniformly in $n$, the sequence $M_{e,n}(\omega)$ is compactly supported in $\Omega_n$ for each $n$ and equals $(\Theta_e(\omega) - \Theta_{e, \infty}(\omega))\chi_{\Omega_R \cap \Omega_n}P^n_{\ker(\Div)}$, where $\Omega_R = \Omega \cap B(0,R)$ contains the compact support of $\Theta_e(\omega) - \Theta_{e, \infty}(\omega)$. This last sequence of operators is clearly discretely compact from $H(\curl, \Omega_n)$ to $L^2(\Omega_n)^3$ because of the compact embedding of $H(\curl, \Omega_R) \cap H(\Div0, \Omega_R)$ into $L^2(\Omega_R)$.
Since discretely compact perturbations do not modify the limiting essential spectrum, $\sigma_e(((\cS_{1,n})_n) \cap \Sigma = \sigma_e((\cS_{1, n} + M_{e,n}))_n) \cap \Sigma$.
Now we note that
\begin{multline*}
\cS_{1, n}(\omega) + M_{e,n}(\omega) \\
= \curl \Theta_m(\omega)^{-1} \curl_0 - \frac{\Theta_{e, \infty}(\omega)}{(\omega + i \gamma_e) (\omega + i \gamma_m)}P_{\ker(\Div)} -  \frac{\Theta_{e}(\omega)}{(\omega + i \gamma_e) (\omega + i \gamma_m)}P_{\nabla}
\end{multline*}
has a triangular block operator matrix representation with respect to the Helmholtz decomposition $\nabla H^1_0(\Omega_n) \oplus H(\Div0, \Omega_n)$. More specifically, if we define $\cS_{m,n}(\omega)$, as $\cS_m$ with $\Omega_n$ replacing $\Omega$ in the domain definition, we have
\[
\cS_{1,n}(\omega) + M_{e,n}(\omega) \simeq \cT_n(\omega) = \begin{pmatrix}
-P^n_\nabla V_e(\omega)P^n_\nabla & 0 \\
-P^n_{\ker(\Div)}V_e(\omega)P^n_\nabla & \cS_{m,n}
\end{pmatrix}.
\]
Now since $\cT_n$ is a sequence of triangular block operator matrices with bounded off-diagonal entries and with $J$-selfadjoint diagonal entries, Theorem \ref{thm: ess spec} implies that
\[
\sigma_e((\cT_n)_n) \cap \Sigma = (\sigma_e((-P^n_\nabla V_e(\omega)P^n_\nabla)_n)_n  \cup (\sigma_e((\cS_{m,n})_n) \cap \Sigma)
\]
Now \cite[Proposition 7.3]{BFMT} implies that $\sigma_e((\cS_{m,n})_n) = \sigma_e((\cS_{\infty,n})_n)$. We give here a sketch of the proof. The proof is modelled upon the proof of Proposition \ref{thm: difference res}. The idea is to establish that for a suitably chosen $\omega \in \Sigma$ the difference $\cK_n(\omega) = \cS_{m,n}(\omega)^{-1} -  \cS_{\infty,n}(\omega)^{-1}$ is discretely compact and that $\cK_n(\omega)^*P_n$ is strongly convergent. Then the equality of the limiting essential spectra follows from \cite[Thm. 2.12(ii)]{MR3831156}. The strong convergence
\[\cK_n(\omega)^*P_n = \cS_{m,n}(\omega)^{-*}P_n -  \cS_{\infty,n}(\omega)^{-*}P_n \sto \cS_m(\omega)^{-*} - \cS_{\infty}(\omega)^{-*}\]
for $\omega \in \Delta_b((\cK_n)_n) \cap \rho(\cS_m) \cap \rho(\cS_\infty)$ can be proved along the lines of Thm.\ref{thm:gsr}(iv). \\
It remains to prove that $(\cK_n(\omega))_n$ is discretely compact. Arguing as in the proof of Prop. \ref{thm: difference res} it may be shown that
\begin{equation}
\label{eq:cK_n}
\cK_n(\omega) = (\curl_0 (\cC_{m,n}(\omega) - \overline{z_{\omega}})^{-1})^* (\Theta_{m,\infty}(\omega)^{-1} - \Theta_{m}(\omega)^{-1}) \curl_0 (\cC_{\infty,n}(\omega) - z_{\omega})^{-1}
\end{equation}
As a consequence of proof of Thm.\ref{thm:gsr}(iv),
\[
\begin{aligned}
(\curl_0 (\cC_{m,n}(\omega) - \overline{z_{\omega}})^{-1})^*P_n &\sto (\curl_0 (\cC_{m}(\omega) - \overline{z_{\omega}})^{-1})^* \\
\curl_0 (\cC_{\infty,n}(\omega) - z_{\omega})^{-1}P_n &\sto \curl_0 (\cC_{\infty}(\omega) - z_{\omega})^{-1},
\end{aligned}
\]
for suitably chosen $\omega$. Moreover, due to Remark \ref{rem: unif_bound_Cinfty} there exists $C > 0$ such that, for $\omega \notin \overline{W(\cC_{\infty})}$,
\[
\sup_{n \in \N}\norma{\curl_0 (\cC_{\infty,n}(\omega) - z_{\omega})^{-1}}_{\cB(H(\Div0,\Omega_n); H(\curl, \Omega_n))} \leq C,
\]
hence $\curl_0 (\cC_{\infty,n}(\omega) - z_{\omega})^{-1} u_n$ is uniformly bounded in $H(\curl, \Omega_n)$ for every sequence $u_n \in L^2(\Omega_n)^3$, $\norma{u_n} \leq 1$.
Therefore, by \eqref{eq:cK_n}, the discrete compactness of $\cK_n(\omega)$ boils down to the discrete compactness of $(\Theta_{m,\infty}(\omega)^{-1} - \Theta_{m}(\omega)^{-1})P^n_{\ker(\Div)}$ from $H(\curl, \Omega_n)$ to $L^2(\Omega_n)^3$. The proof of this last property is identical to the proof of the discrete compactness of the sequence $(\Theta_{e,\infty}(\omega) - \Theta_{e}(\omega))P^n_{\ker(\Div)}$, which was established above.
Altogether we have
\[
\sigma_e((\cT_n)_n) \cap \Sigma = (\sigma_e((-P^n_\nabla V_e(\omega)P^n_\nabla)_n)_n \cup \sigma_e((\cS_{\infty, n})_n))  \cap \Sigma,
\]
and the result follows by recalling that $G_n(\omega) = -P^n_\nabla V_e(\omega)P^n_\nabla$.
\end{proof}

We recall the following standard result, a proof of which can be found in \cite[Lemma 7.4]{BFMT}.
\begin{lemma}
\label{lemma: core}
Let $n \in \N$. The closure of $C^{\infty}_c(\Omega_{n} )^3 \cap H(\Div 0, \Omega_{n})$ with respect to the $H(\curl, \Omega_{n})$-norm is $H_0(\curl, \Omega_{n}) \cap H(\Div 0, \Omega_{n})$.
\end{lemma}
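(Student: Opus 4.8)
One inclusion is straightforward: every $\varphi\in C^\infty_c(\Omega_{n})^3\cap H(\Div 0,\Omega_{n})$ belongs to $H_0(\curl,\Omega_{n})\cap H(\Div 0,\Omega_{n})$, and the latter set is closed in $H(\curl,\Omega_{n})$, since $H_0(\curl,\Omega_{n})$ is closed by definition and $H(\Div 0,\Omega_{n})$ is closed in $L^2(\Omega_{n})^3$, hence also in $H(\curl,\Omega_{n})$. Thus the $H(\curl)$-closure of $C^\infty_c(\Omega_{n})^3\cap H(\Div 0,\Omega_{n})$ is contained in $H_0(\curl,\Omega_{n})\cap H(\Div 0,\Omega_{n})$.

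For the reverse inclusion the plan is first to reduce to a single smooth vector field. Using the Helmholtz decomposition $L^2(\Omega_{n})^3=\nabla\dot H^1_0(\Omega_{n})\oplus H(\Div 0,\Omega_{n})$ together with $\nabla\dot H^1_0(\Omega_{n})\subseteq\ker(\curl_0)$, one has $\curl_0 P_{\ker(\Div)}u=\curl_0 u$ for every $u\in H_0(\curl,\Omega_{n})$, so $P_{\ker(\Div)}$ is a contractive projection of $(H_0(\curl,\Omega_{n}),\norma{\cdot}_{H(\curl,\Omega_{n})})$ onto $(H_0(\curl,\Omega_{n})\cap H(\Div 0,\Omega_{n}),\norma{\cdot}_{H(\curl,\Omega_{n})})$. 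As $C^\infty_c(\Omega_{n})^3$ is $H(\curl)$-dense in $H_0(\curl,\Omega_{n})$, it follows that $\{P_{\ker(\Div)}\varphi:\varphi\in C^\infty_c(\Omega_{n})^3\}$ is $H(\curl)$-dense in $H_0(\curl,\Omega_{n})\cap H(\Div 0,\Omega_{n})$. Hence it suffices to show that for each fixed $\varphi\in C^\infty_c(\Omega_{n})^3$ the divergence-free field $u_0:=P_{\ker(\Div)}\varphi=\varphi-\nabla p$, where $p\in\dot H^1_0(\Omega_{n})$ solves $\Delta p=\Div\varphi$, lies in the $H(\curl)$-closure of $C^\infty_c(\Omega_{n})^3\cap H(\Div 0,\Omega_{n})$.

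For this step the plan is a truncation and mollification argument combined with a divergence correction. Since $\nu\times u_0=0$ on $\partial\Omega_{n}$ (both $\nu\times\varphi$ and $\nu\times\nabla p$ vanish there, the latter because $p\in\dot H^1_0(\Omega_{n})$), one first extends $u_0$ across $\partial\Omega_{n}$ to a divergence-free $H(\curl)$-field on a neighbourhood of $\overline{\Omega_{n}}$; contracting its support towards the interior of $\Omega_{n}$ by an inward dilation and then mollifying (mollification commutes with $\Div$ and with $\curl$) produces smooth fields converging to $u_0$ in $H(\curl,\Omega_{n})$. At each stage the solenoidal constraint is restored by subtracting a Bogovskii-type right inverse $\mathcal B_{\Omega_{n}}$ of the divergence: on a bounded Lipschitz domain, regarded as a finite union of domains star-shaped with respect to a ball, $\mathcal B_{\Omega_{n}}$ maps any function in $C^\infty_c(\Omega_{n})$ with vanishing mean on each connected component to a field in $C^\infty_c(\Omega_{n})^3$ with $\Div\mathcal B_{\Omega_{n}}g=g$, and is bounded on the relevant Sobolev scales; thus $\varphi-\mathcal B_{\Omega_{n}}(\Div\varphi)$, and the analogous corrections of the truncated fields, lie in $C^\infty_c(\Omega_{n})^3\cap H(\Div 0,\Omega_{n})$.

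The hard part is keeping the curl of these corrections under control. From $H(\curl)$-convergence of an approximating sequence one only gets that the $H^{-1}(\Omega_{n})$-norm of its divergence tends to zero, which makes the correction small in $L^2$ but gives no bound on its curl, since $\curl\,\mathcal B_{\Omega_{n}}$ is an operator of order zero. One therefore has to arrange the divergences to converge to zero in $L^2$, which is precisely what the preliminary inward dilation and mollification of $u_0$ are designed to deliver, and is the point where the Lipschitz regularity of $\Omega_{n}$, its possibly non-trivial topology, and the fact that the limiting field need not have vanishing normal trace on $\partial\Omega_{n}$ must be handled with care. Carrying out these steps establishes the equality; the full argument can be found in \cite[Lemma 7.4]{BFMT} and in the classical literature on the Bogovskii operator and on density of solenoidal fields.
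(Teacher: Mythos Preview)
The paper does not actually prove this lemma: it merely states it as a ``standard result'' and cites \cite[Lemma 7.4]{BFMT}. Your write-up therefore already contains strictly more argument than the paper, and it ends by deferring to the very same reference, so in that sense the two are aligned.

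Your first two paragraphs are clean and correct. The observation that $P_{\ker(\Div)}$ is a contractive projection of $H_0(\curl,\Omega_n)$ onto $H_0(\curl,\Omega_n)\cap H(\Div 0,\Omega_n)$, reducing the problem to approximating $u_0=P_{\ker(\Div)}\varphi$ for $\varphi\in C^\infty_c(\Omega_n)^3$, is a genuine simplification (in particular $\curl u_0=\curl\varphi$ is already smooth and compactly supported), and it is not spelled out in the paper.

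The third and fourth paragraphs, however, are only a heuristic outline, and you should be aware that a couple of the steps as written do not quite work. The phrase ``extends $u_0$ across $\partial\Omega_n$ to a divergence-free $H(\curl)$-field'' hides a real difficulty: the zero extension lies in $H(\curl)$ because $\nu\times u_0=0$, but it is \emph{not} divergence-free unless $\nu\cdot u_0=0$ as well, which you correctly note need not hold. Likewise, ``inward dilation'' is only available globally on star-shaped domains; for a general bounded Lipschitz $\Omega_n$ one must localise via a partition of unity and bi-Lipschitz boundary charts, and these localisations destroy the divergence-free condition, which is presumably why you bring in the Bogovski\u{\i} correction afterwards. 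Your candid discussion of the ``hard part'' shows you see where the problem lies; just be aware that the sketch before it does not yet produce fields whose divergence is small in $L^2$, so the Bogovski\u{\i} step is doing more work than the preceding sentences suggest. Since both you and the paper ultimately rely on \cite[Lemma 7.4]{BFMT} for the full details, this is acceptable, but if you want a self-contained proof those are the points that need to be made precise.
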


\begin{theorem} \label{thm: final} The following enclosures hold:
\begin{equation}\label{mme}
\sigma_{\rm poll}((\cL_n)_n) \cap \Sigma\subset \sigma_e((\cL_n)_n) \cap \Sigma \subset \big( W_e(\cS_{\infty}) \cup \sigma_e(G) \big) \cap \Sigma.
\end{equation}
and therefore  $(\sigma_{\rm poll}((\cL_n)_n) \cap \Sigma) \subset (W_e(\cS_{\infty}) \cap \Sigma)$. For every isolated $\omega \in (\sigma_p(\cL) \cap \Sigma)$ outside $W_e(\cS_\infty) \cup \sigma_e(G)$ there exists a sequence $\omega_n \in \sigma(\cL_n)$, $n\in\N$, such that $\omega_n \to \omega$ as $n \to \infty$.
\end{theorem}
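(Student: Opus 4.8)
I would assemble the two enclosures of \eqref{mme} from the structural results already in hand and then run a Riesz‑projection/no‑loss argument for the approximation claim. For the left inclusion, combining Corollary~\ref{cor: sigmae An} ($\sigma_{\rm poll}((\cA_n)_n)\subset\sigma_e((\cA_n)_n)$), Proposition~\ref{prop: pollution Ln}, and the proposition identifying $\sigma_e((\cA_n)_n)$ with $\sigma_e((\cL_n)_n)$ off the poles $-i\gamma_e,-i\gamma_m$ (which in any case lie in $\C\setminus\Sigma$) gives at once $\sigma_{\rm poll}((\cL_n)_n)\cap\Sigma\subset\sigma_e((\cL_n)_n)\cap\Sigma$. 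For the right inclusion I would use the identity $\sigma_e((\cL_n)_n)\cap\Sigma=\sigma_e((\cS_{1,n})_n)\cap\Sigma$ established above together with the decomposition $\sigma_e((\cS_{1,n})_n)\cap\Sigma=(\sigma_e((\cS_{\infty,n})_n)\cup\sigma_e((G_n)_n))\cap\Sigma$ of Proposition~\ref{prop: decomp sigmaeS1n}, reducing the task to (a) $\sigma_e((\cS_{\infty,n})_n)\cap\Sigma\subset W_e(\cS_\infty)\cap\Sigma$ and (b) $\sigma_e((G_n)_n)\cap\Sigma\subset(\sigma_e(G)\cup W_e(\cS_\infty))\cap\Sigma$.

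\textbf{The two reduced inclusions.} For (a): given $\omega\in\Sigma$ and a singular Weyl sequence $u_n\in\dom(\cS_{\infty,n}(\omega))$ with $u_n\rightharpoonup0$, $\|u_n\|=1$, $\cS_{\infty,n}(\omega)u_n\to0$, pairing with $u_n$ and using that $\Theta_{m,\infty}(\omega)\neq0$ on $\Sigma$ forces $\|\curl_0 u_n\|^2\to t:=f(\omega)\geq0$. Lemma~\ref{lemma: core} lets me approximate $u_n$ in $H(\curl,\Omega_n)$‑norm by $\phi_n\in C^\infty_c(\Omega_n)^3\cap H(\Div0,\Omega_n)$; after normalising and extending by zero to $\Omega$ — which preserves the divergence‑free constraint precisely because the fields are compactly supported — I obtain $v_n\in C^\infty_c(\Omega)^3\cap H(\Div0,\Omega)\subset\dom(\curl\curl_0|_{\ker\Div})$ with $\|v_n\|=1$, $v_n\rightharpoonup0$ and $(\curl\curl_0 v_n,v_n)=\|\curl_0 v_n\|^2\to t$; hence $t\in W_e(\curl\curl_0|_{\ker\Div})$ and Proposition~\ref{prop: We infty} gives $\omega\in W_e(\cS_\infty)$. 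For (b) I would follow the localisation scheme of \cite[\S7--8]{BFMT}: writing $G_n(\omega)=-V_{e,\infty}(\omega)P_\nabla^n-P_\nabla^n(V_e(\omega)-V_{e,\infty}(\omega))P_\nabla^n$ and splitting a Weyl sequence by a cut‑off $\chi_R$, the part escaping to infinity forces $V_{e,\infty}(\omega)=0$, i.e.\ $\Theta_{e,\infty}(\omega)=0$, hence $f(\omega)=0\in W_e(\curl\curl_0|_{\ker\Div})$ and $\omega\in W_e(\cS_\infty)$, while the part supported in a fixed compact set — via the discrete compactness used in Proposition~\ref{prop: decomp sigmaeS1n} and the stability of the Helmholtz/Dirichlet projections under the monotone exhaustion $\Omega_n\uparrow\Omega$ — yields a point of $\sigma_e(G)$. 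This gives $\sigma_e((\cL_n)_n)\cap\Sigma\subset(W_e(\cS_\infty)\cup\sigma_e(G))\cap\Sigma$. The ``therefore'' is then immediate: $\sigma_{\rm poll}((\cL_n)_n)\subset\rho(\cL)$ by definition, whereas $\sigma_e(G)\cap\Sigma\subset\sigma_e(\cS_1)\cap\Sigma=\sigma_e(\cL)\cap\Sigma\subset\sigma(\cL)$ by Theorem~\ref{sigma-ess} and Proposition~\ref{sigmaLS}, so spectral pollution cannot meet $\sigma_e(G)$ and $\sigma_{\rm poll}((\cL_n)_n)\cap\Sigma\subset W_e(\cS_\infty)\cap\Sigma$.

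\textbf{Exact approximation of isolated eigenvalues.} Let $\omega_*\in\sigma_p(\cL)\cap\Sigma$ be isolated in $\sigma(\cL)$ and outside $W_e(\cS_\infty)\cup\sigma_e(G)$; since $\omega_*\notin\{-i\gamma_e,-i\gamma_m\}$ it is also isolated in $\sigma(\cA)$. By the enclosure just proved $\omega_*\notin\sigma_e((\cL_n)_n)$, hence $\omega_*\notin\sigma_e((\cA_n)_n)$; as that set is closed and $\sigma_p(\cA)$ coincides with $\{\omega_*\}$ near $\omega_*$, a punctured disc around $\omega_*$ lies in $\Delta_b((\cA_n)_n)\cap\rho(\cA)$. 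Picking a small circle $\Gamma$ in that disc and missing the poles, I would invoke $\cA_n\gsr\cA$ from Theorem~\ref{thm:gsr}(i) and the standard spectral‑inclusion principle for generalised strong resolvent convergence (see \cite{MR3694623,MR3831156}): the Riesz projections $\tfrac1{2\pi i}\oint_\Gamma(\cA_n-\omega)^{-1}\,\rd\omega$ converge strongly to the Riesz projection of $\cA$ at $\omega_*$, which is nonzero, so $\sigma(\cA_n)\cap\operatorname{int}(\Gamma)\neq\emptyset$ for all large $n$; any such point lies in $\sigma(\cL_n)$ because $\operatorname{int}(\Gamma)$ avoids the poles, and letting $\Gamma$ shrink yields $\omega_n\to\omega_*$.

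\textbf{Main obstacle.} The delicate point is inclusion (b): in contrast to (a), the zero‑extension of a divergence‑free field on $\Omega_n$ is not divergence‑free on $\Omega$, and $P_\nabla^n$ is not the restriction of $P_\nabla$, so Weyl sequences cannot simply be transported from $\Omega_n$ to $\Omega$; one must instead separate, by localisation and discrete compactness à la \cite{BFMT}, the ``local'' part of $\sigma_e((G_n)_n)$ (which falls inside $\sigma_e(G)$) from the contribution generated at infinity (which has to be shown to be absorbed by $W_e(\cS_\infty)$). A secondary technical issue is promoting the pointwise resolvent bound defining $\Delta_b((\cA_n)_n)$ to a bound uniform along the contour $\Gamma$, which is what the spectral‑inclusion result supplies but should be cited with care.
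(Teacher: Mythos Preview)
Your overall architecture matches the paper's proof: the left inclusion via Corollary~\ref{cor: sigmae An} and Proposition~\ref{prop: pollution Ln}, the passage to $(\cS_{1,n})_n$ and the decomposition of Proposition~\ref{prop: decomp sigmaeS1n}, inclusion~(a) via Lemma~\ref{lemma: core} and zero-extension of compactly supported divergence-free fields, the ``therefore'' via $\sigma_e(G)\cap\Sigma\subset\sigma_e(\cL)\cap\Sigma\subset\sigma(\cL)$, and the approximation of isolated eigenvalues by invoking the abstract spectral-inclusion result of \cite{MR3831156} for $(\cA_n)_n$ under generalised strong resolvent convergence --- all of this is exactly what the paper does.

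The one genuine divergence is (b). The paper does \emph{not} run a localisation/dichotomy argument; it proves the sharper inclusion $\sigma_e((G_n)_n)\subset\sigma_e(G)$ directly. A Weyl sequence for $G_n$ has the form $\nabla u_n$ with $u_n\in\dot H^1_0(\Omega_n)$; the zero-extension $u_n^0$ lies in $\dot H^1_0(\Omega)$ with $\nabla u_n^0=(\nabla u_n)^0$, and the paper then asserts that $P_{\nabla\dot H^1_0(\Omega)}f=P_{\nabla\dot H^1_0(\Omega_n)}f$ whenever $\supp f\subset\Omega_n$, which transports the Weyl property to $G$ in one stroke. So the very obstacle you single out as the ``main'' one is precisely what the paper claims does not arise, and your split into ``escaping'' and ``compact'' pieces is bypassed entirely.

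Your route for (b) also contains an actual error. From $\Theta_{e,\infty}(\omega)=0$ you conclude $f(\omega)=0\in W_e(\curl\curl_0|_{\ker\Div})$ and hence $\omega\in W_e(\cS_\infty)$. But for a nonnegative self-adjoint operator $W_e={\rm conv}\,\sigma_e$, and in the waveguide of Section~\ref{sec:example} one has $\inf\sigma_e(\curl\curl_0|_{\ker\Div})=(\pi/\max\{L_2,L_3\})^2>0$, so $0\notin W_e(\curl\curl_0|_{\ker\Div})$ in general. The repair is immediate --- $\Theta_{e,\infty}(\omega)=0$ already yields $\omega\in\sigma_e(G)$ by taking gradients supported far out in $\Omega$, so the point still lands in $W_e(\cS_\infty)\cup\sigma_e(G)$ --- but the step as you wrote it does not go through.
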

\begin{proof}
The enclosure of spectral pollution in the limiting essential spectrum follows from Prop. \ref{prop: pollution Ln} and Corollary \ref{cor: sigmae An}. For the enclosure (\ref{mme}) we argue as in \cite[Theorem 7.5]{BFMT}. If $\omega \in \sigma_e((\cS_{\infty,n})_{n\in\N})$, by definition there exist $w_n \in \dom \cS_{\infty,n}(\omega) \subset
H_0(\curl, \Omega_n)$ $\cap H(\Div0, \Omega_n)$, $\norma{w_n} = 1$, $n\in\N$, such that $w_n \rightharpoonup 0$ and
$\cS_{\infty,n}(\omega)w_n \to 0$ as $n\to\infty$. Taking the scalar product with $w_n$, we find that
\[
\langle \cS_{\infty,n}(\omega)w_n,w_n\rangle = \Theta_{m,\infty}(\omega)^{-1} \norma{ \curl_{0} w_n}^2 - \frac{\Theta_{e,\infty}(\omega)}{(\omega + i \gamma_e)(\omega + i \gamma_m)} \to 0
\]
 as $n\to\infty$. By Lemma \ref{lemma: core}, for each $n \in \N$ there exists $v_n \in C^\infty_c(\Omega_n)^3 \cap H(\Div 0, \Omega_n)$ with $\norma{v_n-w_n}^2 \leq 1/n$, $\norma{\curl(v_n - w_n)}^2 \leq 1/n$.
Let $v_n^{0} \in H_0(\curl, \Omega) \cap H(\Div 0, \Omega)$ be the extension of $v_n$ to $\Omega$ by zero for $n\in\N$. \vspace{-1mm} Then
\begin{align*}
 & \left|\Theta_{m,\infty}(\omega)^{-1} \norma{\curl v_n^{0}}^2 - \frac{\Theta_{e,\infty}(\omega)}{(\omega + i \gamma_e)(\omega + i \gamma_m)}\norma{v_n^{0}}^2\right| \\
	&\leq \left| \Theta_{m,\infty}(\omega)^{-1}\norma{\curl w_n}^2 -  \frac{\Theta_{e,\infty}(\omega)}{(\omega + i \gamma_e)(\omega + i \gamma_m)} \norma{w_n}^2 \right| \\
& \hspace{3cm} + \frac{1}{n} \, \bigg(|\Theta_{m,\infty}(\omega)^{-1}| +  \left| \frac{\Theta_{e,\infty}(\omega)}{(\omega + i \gamma_e)(\omega + i \gamma_m)} \right| \bigg) \to 0
\end{align*}
 as $n\to\infty$.	Since $\norma{v_n^{0}} \to 1$ as $n \to \infty$, upon renormalisation of the elements $v_n^{0}$, we obtain $\omega \in W_e(\cS_{\infty})$.\\
Next, we prove the inclusion $\sigma_e((P^n_\nabla G_n(\cdot)|_{\nabla \dot H^1_0(\Omega_n)})_{n\in\N}) \subset \sigma_e(G)$.  If $\omega$ lies in $\sigma_e((P_\nabla G_n(\cdot) |_{\nabla \dot H^1_0(\Omega)})_{n\in\N})$, there exist $u_n \in \dot H^1_0(\Omega_n)$, $\norma{\nabla u_n} = 1$, $n\in\N$, such that $\nabla u_n \rightharpoonup 0$ \vspace{-1mm} and
\[
     \norma{P_{\nabla \dot H^1_0(\Omega_n)} \Theta_e(\omega)^{-1} \nabla u_n} \to 0, \quad n \to \infty.
\]
Let $u_n^{0} \in \dot H^1_0(\Omega)$ be the extension of $u_n \in \dot H^1_0(\Omega_n)$ to $\Omega$ by zero for $n\in\N$. By standard properties of Sobolev spaces, $\nabla u_n^{0} = (\nabla u_n)^{0}$. Hence the sequence ${(u_n^{0})_{n\in\N}} \subset \dot H^1_0(\Omega)$ is such that $\norma{\nabla u_n^{0}} = 1$, $n\in\N$ , $\nabla u_n^{0} \rightharpoonup 0$ and
\[
   \norma{P_{\nabla \dot H^1_0(\Omega_n)} \Theta_e(\omega)^{-1}  \nabla u_n^{0}} \to 0, \quad n \to \infty.
\]
Now the claim follows if we observe that $P_\nabla := P_{\nabla \dot H^1_0(\Omega)} f = P_{\nabla \dot H^1_0(\Omega_n)} f$ for all $f \in  L^2(\Omega)^3$ with $\supp f \subset \Omega_n$.\\
Finally, we consider the approximation of isolated eigenvalues which lie outside $W_e(\cS_\infty) \cup \sigma_e(G)$ but inside
$\Sigma$. Note first that $\sigma(\cA_n) \setminus \{-i \gamma_e, -i \gamma_m\} = \sigma(\cL_n) \setminus \{- i \gamma_e, -i \gamma_m \}$, $n \in \N$. Therefore \cite[Theorem 2.3]{MR3831156}, applied to the sequence $(\cA_n)_n$ approximating $\cA$, yields that for every isolated $\omega \in \sigma(\cA)$ outside $\sigma_e((\cA_n)_n) \cup \sigma_e((\cA^*_n)_n)^* = \sigma_e((\cA_n)_n)$ there exists $\omega_n \in \sigma(\cA_n)$, $n \in \N$, and $\omega_n \to \omega$. Since we have already proved that $\sigma_e((\cA_n)_n) \setminus \{-i \gamma_e, -i \gamma_m\} = \sigma_e((\cL_n)_n) \setminus \{-i \gamma_e, -i \gamma_m\}$ and that $\sigma_e((\cL_n)_n) \cap \Sigma \subset (W_e(\cS_\infty) \cup \sigma_e(G)) \cap \Sigma$, we deduce that every isolated point in $\omega \in \sigma(\cA) \cap \Sigma = \sigma(\cL) \cap \Sigma$, outside $W_e(\cS_\infty) \cup \sigma_e(G)$ can be approximated by spectral points $\omega_n \in \sigma(\cA_n)\setminus \{-i \gamma_e, -i \gamma_m\} = \sigma(\cL_n)$, concluding the proof.
\end{proof}

\section{Example}\label{sec:example}
We consider the Drude-Lorentz model of a dispersive metamaterial in a cuboid $K =(0,1) \times (0,L_2) \times (0,L_3)$, embedded in
an infinite waveguide $\Omega = (0, +\infty) \times (0, L_2) \times (0, L_3)$, for some $L_2, L_3 > 0$; the region $x_1>1$ in $\Omega$ is assumed to
be a vacuum. We model the discontinuity between the vacuum and the metamaterial as a discontinuity in the Drude-Lorentz parameters $\theta_e$ and $\theta_m$; note that when $\theta_e = \theta_m = 0$ we have the standard time-harmonic Maxwell system in the vacuum with permeability and permittivity constant and equal to 1. Specifically, we set
\[
\theta_e^2(x) = \alpha_e \chi_K(x), \quad \theta_m^2(x) = \alpha_m \chi_K(x), \quad \gamma_e = t > 0, \: \gamma_m = 1,
\]
where $\alpha_e, \alpha_m$ are positive constants. This leads to the coupled pair of operators
\[
\cL_1(\omega) =  \begin{pmatrix}
- \omega & i \curl \\
-i \curl_0 & - \omega
\end{pmatrix} \qquad
\cL_2(\omega) =
\begin{pmatrix}
- \omega + \frac{\alpha_e}{\omega + it} & i \curl \\
- i \curl_0 & - \omega + \frac{\alpha_m}{\omega + i}
\end{pmatrix}
\]
where $\omega \in \C \setminus \{-i, -i t\}$, $\cL_i(\omega)$ acts in $L^2(\Omega_i)^3 \oplus L^2(\Omega_i)^3$, $i=1,2$, and
\[\Omega_1 = K, \quad  \Omega_2 = (1, +\infty) \times (0,L_2) \times (0,L_3).\]
According to our results it is convenient to consider the associated first Schur complements, from which we deduce that if $(E,H)$ is an eigenfunction with eigenvalue $\omega$ then
\[
\begin{cases}
\curl \curl_0 E - \omega^2 E = 0 \quad &\textup{in $\Omega_1$,}\\
\nu \times E = 0 \quad &\textup{on $\p \Omega_1 \cap \p \Omega$,}
\end{cases}
\]
\[
\hspace{0.3cm}\begin{cases}
\curl \curl_0 E - f(\omega) E = 0 \quad &\textup{in $\Omega_2$,} \\
\nu \times E = 0 \quad &\textup{on $\p \Omega_2 \cap \p \Omega$,}
\end{cases}
\]
in which $f(\omega)$ is defined by
\[
f(\omega) = \frac{(\omega^2 + i \omega t - \alpha_e)(\omega^2 + i \omega - \alpha_m)}{(\omega + i) (\omega + it)}
\]
for all $\omega \notin \{-i, - it\}$. Note that the points $\omega = 0$ and the roots of $(\omega^2 + i \omega t - \alpha_e)(\omega^2 + i \omega - \alpha_m)$ will be in the essential spectrum, since any gradient field compactly supported in $\Omega_{i}$, $i=1,2$ will solve both systems. Define,
for $n=(n_2,n_3)\in\N^2$,
\[ \la^1_n(\omega) = \sqrt{\frac{\pi^2 n_2}{L_2^2} + \frac{\pi^2 n_3^2}{L_3^2} - f(\omega)}, \quad\quad  \la^2_n(\omega) = \sqrt{\frac{\pi^2 n_2}{L_2^2} + \frac{\pi^2 n_3^2}{L_3^2} - \omega^2}.
\]
The compatibility condition $\nu \times \curl E|_{x_1 = 0^-} = -\nu \times \curl E|_{x_1 = 0^+}$ implies that every eigenvalue $\omega$ must satisfy,
for some $n \in \N^2$, the equation
\begin{equation}\label{compat1}
\la^1_n(\omega)  \coth(\la^1_n(\omega)) + \la^2_n(\omega) = 0.
\end{equation}
Consider now the truncated domains $\Omega_X = (0,X) \times (0, L_2) \times (0, L_3)$.
The compatibility condition \eqref{compat1} now becomes
\begin{equation}\label{compat2}
\la^1_n(\omega)  \coth(\la^1_n(\omega)) + \la^2_n(\omega) \coth (\la^2_n(\omega) (X-1)) = 0.
\end{equation}
Equations (\ref{compat1},\ref{compat2}) can be solved with a standard computational engine. For the computations, we set $t=4$, $\alpha_e = 400$, $\alpha_m = 10$, see Figure \ref{fig:truncated_waveguide}. For this example, Theorem \ref{thm: final} implies that spectral pollution can only happen in $W_e(\cS_\infty)$. Since $(\omega + i \gamma_m(\omega))\cS_\infty(\omega) = \curl \curl_0 - \omega^2$ acting on divergence-free vector fields, $$W_e(\cS_\infty) = -({\rm conv}(\sigma_e(\curl\curl_0)))^{1/2} \cup ({\rm conv}(\sigma_e(\curl\curl_0)))^{1/2}.$$
Due to the divergence-free condition, $\curl \curl_0 = - \Delta$ as differential expressions. We can now perform a standard principal symbol analysis to obtain $\sigma_e(\curl\curl_0) = [(\pi/\max\{L_2, L_3\})^2, + \infty)$, and hence
\[ \sigma_e(\cS_\infty) = W_e(\cS_\infty) = \left(-\infty,\frac{-\pi}{\max\{L_2, L_3\}}\right]\cup\left[\frac{\pi}{\max\{L_2, L_3\}},+\infty\right). \]
\begin{figure}[tbh]
\centering
\includegraphics[width=0.6\textwidth]{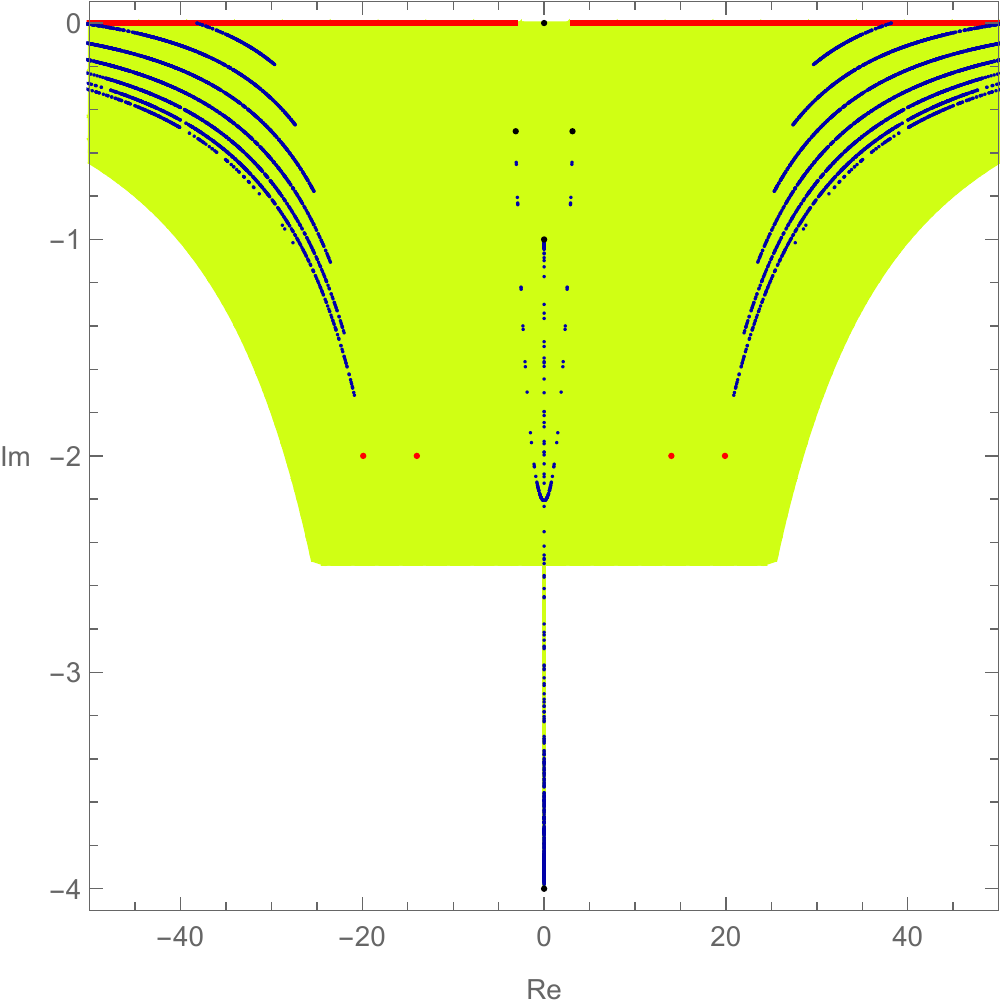}
\caption{Spectrum of the Drude-Lorentz model in the waveguide $\Omega = (0, +\infty) \times (0, 1) \times (0, \pi)$. The eigenvalues are in blue, the essential spectrum in red, the poles in black, and the spectral enclosure $\Gamma$ of Theorem \ref{thm:refnumran} in green.}
\label{fig:full_waveguide}
\end{figure}
\begin{figure}[tbh]
\centering
\label{fig:truncated_waveguide}
\includegraphics[width=0.6\textwidth]{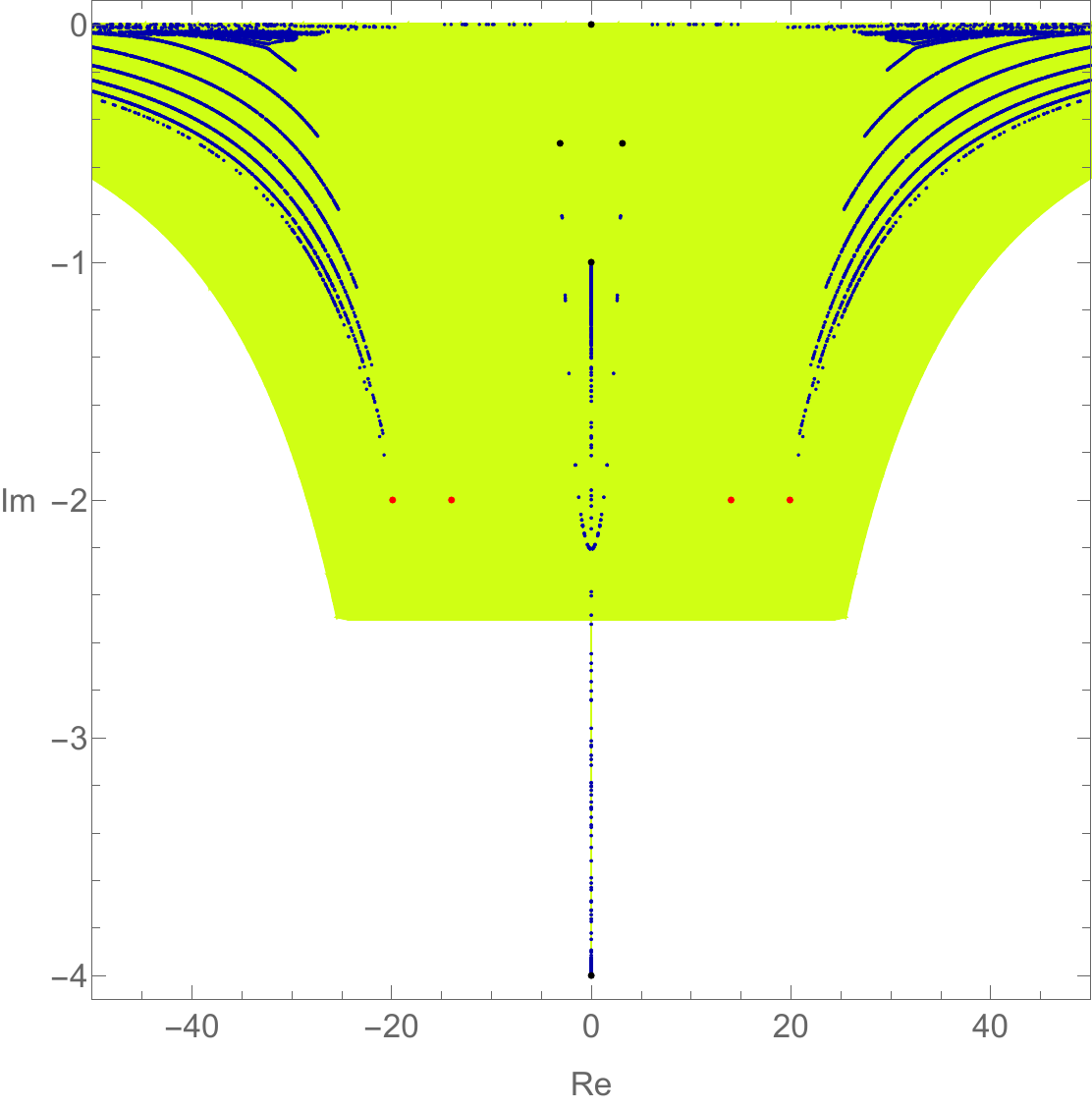}
\caption{Spectrum of the Drude-Lorentz model in the truncated waveguide $\Omega_X = (0, X) \times (0, 1) \times (0, \pi)$, with $X=25$. Accumulation of eigenvalues to the real axis is clearly visible.}
\end{figure}
Regarding $\sigma_e(G)$, let us set $(\omega^2 + \omega i \gamma_e)/ \alpha_e =: z$ and let $\omega \in \sigma_e(G)$ with associated Weyl sequence $\nabla \varphi_n \rightharpoonup 0$ in $L^2(\Omega)^3$, $\varphi_n \in \dot{H}^1_0(\Omega)$, $n \in \N$. We then have
$-P_\nabla (z - \chi_K) \nabla \varphi_n \to 0$ in $L^2(\Omega)^3$ if and only if $- \nabla (-\Delta_{\dot{H}^1_0})^{-1} \Div (z - \chi_K) \nabla \varphi_n \to 0$ in $L^2(\Omega)^3$, where $-\Delta_{\dot{H}^1_0}$ is the Dirichlet laplacian mapping $\dot{H}^1_0(\Omega)$ to its dual $\dot{H}^{-1}(\Omega)$. Now, note that $- \nabla (-\Delta_{\dot{H}^1_0})^{-1} \Div (z - \chi_K) \nabla \varphi_n \to 0$ in $L^2(\Omega)^3$ if and only if $- \Div (z - \chi_K) \nabla \varphi_n \to 0$ in $\dot{H}^{-1}(\Omega)$; the `only if' part follows immediately by applying $\nabla (-\Delta_{\dot{H}^1_0})^{-1}$, while the `if' part follows from definition of $\dot{H}^{-1}(\Omega)$.  Therefore $\sigma_e(G)$ is completely determined by $\sigma_e(- \Div (\cdot - \chi_K) \nabla)$ where for every $z \in \C$, $- \Div (z - \chi_K) \nabla$ is understood as an operator from $\dot{H}^1_0(\Omega)$ to $\dot{H}^{-1}(\Omega)$. For smooth boundaries, the problem of finding the essential spectrum of such $\Div (p(\cdot) - \chi_K) \nabla$ pencils has been recently investigated in \cite{MR4041099}. It is not too difficult to prove that $z = 0$ and $z = 1$ are in the essential spectrum of $G$. They correspond to the solutions of the two quadratic equations $\omega^2 + \omega i \gamma_e - \alpha_e = 0$ and $\omega^2 + \omega i \gamma_e = 0$. However, there are further points in the essential spectrum corresponding to $z = 1/2$. In total, therefore, $\sigma_e(G)$ consists of the six points
\[ \sigma_e(G) = \left\{0,-i\gamma_e,-i\frac{\gamma_e}{2}\pm\sqrt{\alpha_e-\frac{\gamma_e^2}{4}},-i\frac{\gamma_e}{2}\pm\sqrt{\frac{\alpha_e}{2}-\frac{\gamma_e^2}{4}}\right\}; \]
for the values used in the numerical experiments, namely $\alpha_e = 400$ and $\gamma_e = 4$, only $0$ and $-i\gamma_e$ are purely imaginary.
The four points of $\sigma_e(G)$ lying off the imaginary axis are marked in red in Fig. \ref{fig:full_waveguide}.
Moreover, we claim that the eigenvalues of $\cL$ (in blue in Figure \ref{fig:full_waveguide}) are isolated (and of finite geometric multiplicity), and therefore Theorem \ref{thm: final} implies that they are approximated without spectral pollution via domain truncation. For the claim, note that $\sigma_{e1}(\cL) = \sigma_{e2}(\cL)$ due to $\cJ$-self-adjointness of $\cL$. Also, it was proved above that $\sigma_{e1}(\cL)$ is contained in the union of two real half-lines and six isolated points. Therefore, $\Delta_{e1}(\cL) := \C \setminus \sigma_{e1}(\cL)$ has only one connected component, which has non-trivial intersection with $\rho(\cL)$. According to the notation of \cite[Chp. IX]{EE}, $\Delta_{e1}(\cL) = \Delta_{e5}(\cL)$, where $\Delta_{e5}(\cL) = \C \setminus \sigma_{e5}(\cL)$; \cite[Theorem 1.5]{EE} now implies that any $\omega \notin \sigma_{e}(\cL) = \sigma_{e5}(\cL)$ is an isolated eigenvalue (of finite geometric multiplicity). The claim is proved.
\vspace{3mm}

\noindent
{\small
{\bf Acknowledgements.}
The authors are thankful for the support of the UK Engineering and Physical Sciences Research Council through grant EP/T000902/1, \textit{`A new paradigm for spectral localisation of operator pencils and analytic operator-valued functions'}. The authors thank the anonymous referees for valuable remarks and suggestions.
}

\bibliographystyle{abbrv}
\bibliography{Maxbib}

\end{document}